\newtheorem{theorem}{Theorem}[section]
\newtheorem{definition}[theorem]{Definition}
\newtheorem{lemma}[theorem]{Lemma}
\numberwithin{equation}{section}
\newtheorem{proposition}[theorem]{Proposition}
\renewcommand{\ell}{l}
\renewcommand{\epsilon}{\varepsilon}
\def\al{\alpha}                    
\def\be{\beta}
\def\ga{\gamma}
\def\ve{\varepsilon}
\def\eps{\varepsilon}
 \def\cS{\mathcal{S}}
\def\px{\langle x \rangle}
\def\pxi{\langle \xi \rangle}
\def\rd{\bR^d}
\def\rdd{{\bR^{2d}}}
\def\R{\right)}
\def\<{\left<}
\def\>{\right>}
\def\mv1{M_v^1}
\def\mn{(m,n)}
\def\mn'{(m',n')}
\newcommand{\eab}{\eps^{|\alpha|+|\beta|}}
\newcommand{\mab}{M(\alpha,\beta)}
\newcommand{\tg}{\tilde{\gamma}}
\newcommand{\tid}{\tilde{\delta}}
\newcommand{\sg}{\mathcal{H}_{ sect}(\rd)}
\def\N{\mathbb{N}}
\def\R{\mathbb{R}}
\def\C{\mathbb{C}}
\def\rd{\mathbb{R}^d}
\def\rdd{\mathbb{R}^{2d}}
\begin{document}

\begin{abstract}
We prove sharp analytic regularity and decay at infinity of
solutions of variable coefficients nonlinear harmonic
oscillators. Namely, we show holomorphic extension to a
sector in the complex domain, with a corresponding Gaussian
decay, according to the basic properties of the Hermite
functions in $\rd$. Our results apply, in particular, to
nonlinear eigenvalue problems for the harmonic oscillator
associated to a real-analytic scattering, or asymptotically
conic, metric in $\rd$, as well as to certain perturbations
of the classical harmonic oscillator.

\end{abstract}

\title[Nonlinear harmonic oscillators]{Regularity and decay of solutions of nonlinear
harmonic oscillators}
\author{Marco Cappiello \and Fabio Nicola}
\address{Dipartimento di Matematica,  Universit\`{a} degli Studi di Torino,
Via Carlo Alberto 10, 10123
Torino, Italy}
\address{Dipartimento di Matematica, Politecnico di
Torino, Corso Duca degli
Abruzzi 24, 10129 Torino,
Italy}
\email{marco.cappiello@unito.it}
\email{fabio.nicola@polito.it}
\subjclass[2000]{35J61, 35B65, 35B40, 35S05, 35A20}
\date{}
\keywords{Nonlinear harmonic oscillators, holomorphic
extension, Gaussian decay, pseudodifferential operators}

\maketitle

\section{Introduction}
The harmonic oscillator $H=-\Delta+|x|^2$ in ${\rd}$
represents one of the simplest and yet more useful models
for several physical phenomena, and its relevance both in
Mathematical Analysis and Physics is well-known. Its
eigenfunctions, namely the Hermite functions $h_\alpha(x)$,
are given by the formulae
$h_\alpha(x)=p_\alpha(x)e^{-|x|^2/2}$, $\alpha\in \N^d$,
where $p_\alpha$ is a polynomial of degree $|\alpha|$ (see
e.g. \cite{thangavelu}). Two remarkable features of the
Hermite functions are their Gaussian decay at infinity, and
their very high regularity. In fact, we have
\begin{equation}\label{intro}
|h_\alpha(x)|\lesssim e^{- c |x|^2}\quad{\rm for}\ x\in\rd,\qquad |\widehat{h_\alpha}(\xi)|\lesssim  e^{- c |\xi|^2}\quad{\rm for}\ \xi\in\rd
\end{equation}
for every $c<1/2$, where $\widehat{h_\alpha}(\xi)$ denotes
the Fourier transform of $h_\alpha$. The functions $h_\alpha$ in fact extend to entire functions $h_\alpha(x+iy)$ in the complex space $\C^d$ and, for every $0<\epsilon<1$, we have the estimates
\begin{equation}\label{intro12ma}
|h_\alpha(x+iy)|\lesssim e^{- c |x|^2}\quad {\rm in\ the\
sector}\,\ |y|<\epsilon (1+|x|),
\end{equation}
 for some $c>0$.\par
In this paper we wonder to what extent these properties
continue to hold for {\it nonlinear} perturbations of the
harmonic oscillator, possibly with variable coefficients.
Relevant models  are equations of the type
\begin{equation}\label{12ma}
-\Delta u+|x|^2 u-\lambda u=F[u],\qquad \lambda\in\C,
\end{equation}
with a nonlinearity of the form $F[u]=\sum_{|\alpha|+|\beta|\leq 1} c_{\alpha\beta} x^\beta\partial^\alpha u^k$, $k\geq 2$. Cappiello, Gramchev and Rodino in \cite{A18} showed by a counterexample that generally, even in dimension $d=1$, there can exist Schwartz solutions of \eqref{12ma} which do not extend to entire functions in $\C$. In fact, a refinement of their argument (see Section \ref{esempi} below) shows that a sequence of complex singularities may occur, approaching a straight line at infinity.  On the other hand, as a positive result, it was proved in \cite{A18} that every solution $u\in H^s(\rd)$, $s>d/2+1$, of \eqref{12ma} extends to a holomorphic function $u(x+iy)$ on the strip $\{z \in \C^d: |{\rm Im}\, z|<T\}$ and satisfies there an estimate of the type $|u(x+iy)|\leq Ce^{-c|x|^2}$, for some $c,C,T>0$. Similar results, namely, holomorphic extension to a {\it strip} and super-exponential decay, were proved in \cite{A18, CGR2} for more general classes of elliptic operators with polynomial coefficients. 
\par
The above mentioned negative result as well as the
estimates \eqref{intro12ma}, valid in a sector in the
linear case, suggest the possibility, even in the presence
of certain nonlinear perturbations, of a holomorphic
extension of the solutions to a {\it sector}, rather than
only a strip, with a corresponding Gaussian decay estimate.
In this paper we show, for a large class of equations
including \eqref{12ma}, even with {\it non-polynomial}
coefficients, that this is in fact the case. The techniques
developed here actually will apply to much more general
dif\-fe\-ren\-tial (and pseudodifferential) operators. To
motivate the class of operators we will consider, we first
discuss a special yet important example.
\par\medskip\noindent Consider the equation $Pu=F[u],$ with
\begin{equation}\label{intro1}
P=\sum_{j,k=1}^d g^{jk}(x)\partial_j\partial_k +\sum_{k=1}^d b_k(x)\partial_k +V(x),
\end{equation}
where the functions $g^{jk}$, $b_k$, and the potential $V$ are real-analytic in $\rd$, and satisfy the following conditions.\par We suppose that the matrix $\big(g^{jk}\big)$ is real and symmetric and that there exists a constant $C>0$ such that
\begin{equation}\label{intro2}
\sum_{j,k=1}^dg^{jk}(x)\xi_j\xi_k\geq C^{-1}|\xi|^2\qquad{\forall x,\xi\in\rd},
\end{equation}
as well as
\begin{equation}\label{intro3}
|\partial^\alpha g^{jk}(x)|+|\partial^\alpha b_k(x)|\leq C^{|\alpha|+1}\alpha!\langle x\rangle^{-|\alpha|}\quad\forall x\in\rd, \alpha\in\N^d,
\end{equation}
where $\px=(1+|x|^2)^{1/2}.$
Moreover we assume that
\begin{equation}\label{intro4} \begin{cases}
{\rm Re}\,V(x)\geq C^{-1} |x|^2\qquad \qquad {\rm for}\ |x|>C,\\ |\partial^\alpha V(x)|\leq C^{|\alpha|+1}\alpha!\langle x\rangle^{2-|\alpha|} \quad\forall x\in\rd,\ \alpha\in\N^d \end{cases}.
\end{equation}
We consider a nonlinearity of the form
\begin{equation}\label{intro5}
F[u]=\sum_{2\leq h+l\leq N\atop 1\leq j\leq d} F_{jhl}(x)u^h(\partial_j u)^l,
\end{equation}
for some $N\in\N$, where
\begin{equation}\label{intro6}
|\partial^\alpha F_{jhl}(x)|\leq C^{|\alpha|+1}\alpha!\langle x\rangle^{1-\min\{1,l\}-|\alpha|} \quad\forall x\in\rd,\ \alpha\in\N^d.
\end{equation}
Then, we claim that\par\smallskip {\it Under these
assumptions, every solution $u \in H^s(\R^d), s>d/2+\min
\{1,l\}$, of the equation $Pu=F[u]$, extends to a
holomorphic function $u(x+iy)$ in the sector $
\{z=x+iy\in\mathbb{C}^d:\ |y|<\eps(1+|x|)\} $ of
$\mathbb{C}^d$ for some $\eps>0$, satisfying there the
estimates $ |u(x+iy)|\leq Ce^{-c|x|^2}, $ for some
constants $C>0$, $c>0$.}\par\smallskip Notice that if
$V(x)$ satisfies \eqref{intro4} then also $V(x)-\lambda,
\lambda \in \C,$ satisfies it, so that the above result
applies to the corresponding eigenvalue problem as well. In
the linear case ($F[u]=0$) this result intersects the wide
literature on the decay and regularity of eigenfunctions of
Schr\"odinger operators, cf. Agmon \cite{Ag}, Nakamura
\cite{Na}, Sordoni \cite{So}, Rabinovich \cite{Ra},
Rabinovich and Roch \cite{RR} and many others.\par We also
remark that suitable perturbations of the standard harmonic
oscillator fall in this class of equations, as well as the
harmonic oscillator associated to a real-analytic
scattering, or asymptotically conic, Riemannian metric in
$\rd$ (see Section \ref{esempi} below). For a detailed
analysis of these metrics and their important role in
geometric scattering theory we refer to Melrose
\cite{melrose1, melrose2}, Melrose and Zworski
\cite{melrose3}.\par

 \par\medskip\noindent
Let us now state our main result in full generality. We
consider nonlinear equations whose linear part is a
differential or even pseudodifferential operator
\begin{equation}\label{pr0}
Pu(x)=p(x,D)u(x)=(2\pi)^{-d}\int_{\R^d} e^{ix\xi}
p(x,\xi)\widehat{u}(\xi)\,d\xi,
\end{equation}
with symbol $p$ in the class $\Gamma_a^m(\R^d), m>0,$ defined as the space of all functions $p \in C^{\infty}(\R^{2d})$ satisfying the estimates
\begin{equation}
\label{Gsymbols}
|\partial_{\xi}^{\alpha} \partial_x^{\beta}p(x,\xi)|\leq C^{|\alpha|+|\beta|+1}\alpha!\beta!(1+|x|+|\xi|)^{m-|\alpha|}\px^{-|\beta|}
\end{equation}
for all $(x,\xi) \in \R^{2d}, \alpha, \beta \in \N^d,$ and for some positive constant $C$
independent of $\alpha, \beta.$ This class is particularly suited to study harmonic oscillators
 with variable analytic coefficients and it is inspired by the class considered
 by Shubin  \cite{Shubin:1} and Helffer \cite{Helffer:1} which was in fact
 modelled on the harmonic oscillator and its real powers. However, a {\it differential} operator belongs to that class only if its coefficients are polynomial, which is an unpleasant limitation. With respect to \cite{Helffer:1}, \cite{Shubin:1}, we overcome this restriction by assuming the less demanding estimates \eqref{Gsymbols}, which at the same time imply that the symbol $p$ is  real-analytic. For example, a function $$p(x,\xi)= \sum_{|\alpha|\leq m}c_{\alpha}(x)\xi^{\alpha}$$ belongs to $\Gamma_a^m(\R^{d})$ if the coefficients $c_{\alpha}$ satisfy
$|\partial^\beta c_\alpha(x)|\leq C^{|\beta|+1}\beta!\langle x\rangle^{m-|\alpha|-|\beta|}$.
\\We shall assume moreover the symbol $p$ of our operator to be $\Gamma$-elliptic, in the sense that, for some constant $R>0$,
\begin{equation}\label{Gammaell}
\inf_{|x|+|\xi| \geq R}(1+|x|+|\xi|)^{-m} |p(x,\xi)|>0.
\end{equation}
This is clearly a global version of the classical notion of ellipticity. For example,
 by \eqref{intro2}--\eqref{intro4}, the symbol of the operator in \eqref{intro1} belongs to $\Gamma_a^2(\R^{d})$ and satisfies \eqref{Gammaell} with $m=2$.

We moreover consider a nonlinearity of the form
\begin{equation}\label{A5.2}
F[u]= \sum_{h,l,\rho_{1},\ldots, \rho_{l}}F_{h,l,\rho_{1}\ldots \rho_{l}}(x) \prod_{k=1}^l\partial^{\rho_{k}}u ,
\end{equation}
where the above sum is finite and $h,l \in \N, l \geq 2, \rho_{1},\ldots, \rho_{l}\in \N^{d}$ satisfy the condition $h+\max\{|\rho_{k}|\} \leq \max\{m-1,0\}$. We assume that the functions $F_{h,l,\rho_{1}\ldots \rho_{l}}(x) $ satisfy the following estimates
\begin{equation}
\label{estnonlincoeff}
|\partial^{\beta}F_{h,l,\rho_{1}\ldots \rho_{l}}(x)|\leq C^{|\beta|+1}\beta! \px^{h-|\beta|},
\end{equation}
for some positive constant $C$ depending on $h,l,\rho_1,\ldots, \rho_{l}$ and independent of $\beta.$ Our main result is the following.

\begin{theorem}\label{mainthm}
Let $p \in \Gamma_a^m(\R^{d}), m>0,$ satisfy
\eqref{Gammaell} and let $F[u]$ be of the form
\eqref{A5.2}, \eqref{estnonlincoeff} (possibly with some
factors in the product replaced by their conjugates).
Assume that $u \in H^s(\R^d), s>d/2+\max_k \{|\rho_k|\}$ is
a solution of the equation $Pu=F[u]$. Then $u$ extends to a
holomorphic function $u(x+iy)$ in the sector
\begin{equation}\label{cla3intro}
\{z=x+iy\in\mathbb{C}^d:\ |y|<\eps(1+|x|)\}
\end{equation}
of $\mathbb{C}^d$, for some $\eps>0$, satisfying there the estimates
\begin{equation}\label{cla4intro}
|u(x+iy)|\leq Ce^{-c|x|^2},
\end{equation}
for some constants $C>0$, $c>0$. The same holds for all the derivatives of $u$.
\end{theorem}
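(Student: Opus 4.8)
The strategy is a bootstrap in three stages: first establish that $u$ lies in the natural "Gelfand--Shilov type" space attached to the class $\Gamma_a^m$, i.e. that $u$ and all its derivatives decay like $e^{-c|x|^2}$ and are analytic with controlled derivative growth; then convert this into the holomorphic extension to a sector. The first step is regularity and decay in the $C^\infty$ category. Since $p$ is $\Gamma$-elliptic of order $m$, there is a parametrix $E=e(x,D)$ with $e\in\Gamma^{-m}$ such that $EP=\Id+\text{(smoothing, and in fact rapidly decaying globally)}$; applied to $Pu=F[u]$, and using that the nonlinearity $F[u]$ involves only derivatives $\partial^{\rho_k}u$ with $|\rho_k|\le \max\{m-1,0\}$ — hence one order below what the parametrix gains — this gives a gain of regularity and of decay at infinity. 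Iterating, one obtains $u\in\cS(\rd)$ with $\langle x\rangle^N\partial^\alpha u\in L^\infty$ for all $N,\alpha$; the product structure \eqref{A5.2}, \eqref{estnonlincoeff} is compatible with this because $H^s$ with $s>d/2+\max_k|\rho_k|$ is a Banach algebra stable under the relevant differentiations and under multiplication by the symbols $F_{h,l,\rho}$ of polynomial growth $\langle x\rangle^{h}$, and $h+\max|\rho_k|\le m-1$ keeps everything under the threshold.

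The heart of the proof is the quantitative analytic version: one must show there exist constants $C_0$, $c>0$ such that
\begin{equation}\label{plan-key}
\|\langle x\rangle^{|\beta|}\, e^{c|x|^2}\,\partial^\beta u\|_{L^2(\rd)}\le C_0^{|\beta|+1}\,\beta!\qquad\text{for all }\beta\in\N^d.
\end{equation}
This is where the precise form of the hypotheses \eqref{Gsymbols}, \eqref{estnonlincoeff} is used, since the factorial bounds on the derivatives of the symbol and of the coefficients $F_{h,l,\rho}$ are exactly what is needed to propagate analytic-type estimates through the parametrix and through the nonlinear terms. The natural device is a weighted estimate: conjugate $P$ by the weight $e^{c|x|^2}\langle x\rangle^{|\beta|}$ (or, more robustly, work with the one-parameter family of conjugations used in the Gelfand--Shilov/Shubin--Helffer calculus, replacing $e^{c|x|^2}$ by $e^{\phi}$ with $\phi$ a regularized version of $c|x|^2$ cut off for large $|x|$ to stay in the symbol class), and check that the conjugated operator $P_\phi=e^\phi P e^{-\phi}$ remains $\Gamma$-elliptic of order $m$ uniformly in the parameter $c$ as long as $c$ is small. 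Then \eqref{plan-key} is proved by induction on $|\beta|$: apply the parametrix of $P_\phi$ to the equation satisfied by $w_\beta=e^\phi\langle x\rangle^{|\beta|}\partial^\beta u$, commute derivatives past $P$ (each commutator $[\partial^\gamma,P]$ costing a combinatorial factor controlled by \eqref{Gsymbols}), and estimate the contribution of $\partial^\beta F[u]$ via the Leibniz rule; the product rule generates sums $\sum_{\gamma_1+\cdots+\gamma_l+\delta=\beta}\binom{\beta}{\gamma_1,\ldots,\gamma_l,\delta}\partial^\delta F_{h,l,\rho}\prod_k\partial^{\rho_k+\gamma_k}u$, and the multinomial coefficients combine with the inductive factorial bounds and the bound \eqref{estnonlincoeff} to close the induction, \emph{provided} the constant $C_0$ is chosen large and the gain from $\Gamma$-ellipticity (a fixed factor $<1$ per derivative, coming from the order-$(m)$ vs. order-$(\le m-1)$ discrepancy in the nonlinearity, together with the extra decay $\langle x\rangle^{-|\beta|}$ absorbing the polynomial weights) beats the combinatorial losses. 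Standard nonlinear-Gelfand--Shilov bootstrap arguments (as in the Cappiello--Gramchev--Rodino circle of ideas, here adapted from strips to sectors) show the induction is self-consistent.

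Finally, \eqref{plan-key} together with Sobolev embedding upgrades to $\|\langle x\rangle^{|\beta|}e^{c|x|^2}\partial^\beta u\|_{L^\infty}\le C_1^{|\beta|+1}\beta!$, and from this the holomorphic extension to the sector follows by summing the Taylor series: for $z=x+iy$ with $|y|<\eps(1+|x|)$, write $u(x+iy)=\sum_\beta \frac{(iy)^\beta}{\beta!}\partial^\beta u(x)$ and bound the general term by $\frac{|y|^{|\beta|}}{\beta!}\cdot C_1^{|\beta|+1}\beta!\,\langle x\rangle^{-|\beta|}e^{-c|x|^2}\le C_1 e^{-c|x|^2}(C_1\eps)^{|\beta|}(1+|x|)^{|\beta|}\langle x\rangle^{-|\beta|}$, which is summable with sum $O(e^{-c|x|^2})$ once $\eps<1/(2C_1)$, after noting $(1+|x|)\le 2\langle x\rangle$. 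This gives both the holomorphy on the sector \eqref{cla3intro} and the Gaussian bound \eqref{cla4intro}, possibly after shrinking $c$; the same argument applied to $\partial^\alpha u$ in place of $u$ — whose weighted derivative bounds are contained in \eqref{plan-key} — gives the statement for all derivatives. The main obstacle is unquestionably the uniform (in the weight parameter $c$) $\Gamma$-ellipticity of the conjugated operator $P_\phi$ and the correct accounting in the inductive step so that the factorial bounds close; everything else is bookkeeping built on the parametrix construction for $\Gamma_a^m$, which one expects to have been set up in the body of the paper.
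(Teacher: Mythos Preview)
Your overall architecture --- bootstrap to $\cS(\rd)$, then prove quantitative analytic--Gaussian bounds, then sum the Taylor series to get the sector extension --- matches the paper, and the last step is essentially the paper's Theorem~\ref{estensione}. The first step (the $\cS(\rd)$ bootstrap via the parametrix and a gain of order $\tau=\min\{m/2,1/2\}$ in the scale $H^{s_1,s_2}$) is also what the paper does. The divergence is in the middle step, and there your outline both takes a different route from the paper and, as written, has a genuine gap.

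The paper does \emph{not} conjugate by Gaussian weights at all. Instead it introduces the two--parameter family $\|x^\beta\partial^\alpha u\|_{Q^s}$ and the specific weight $M(\alpha,\beta)=|\alpha|!^{1/2}\max\{|\alpha|,|\beta|\}!^{1/2}$, sets $S^{s,\eps}_N[u]=\sum_{|\alpha|+|\beta|\leq N}\eps^{|\alpha|+|\beta|}M(\alpha,\beta)^{-1}\|x^\beta\partial^\alpha u\|_{Q^s}$, and closes an iteration of the form $S^{s,\eps}_N[u]\le C+C_s\eps\,S^{s,\eps}_{N-1}[u]+C_s\eps\,(S^{s,\eps}_{N-1}[u])^l$. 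The Gaussian decay is \emph{encoded} in the unusual weight $M(\alpha,\beta)$ (note it is not $\alpha!\beta!$) and only extracted a posteriori via Theorem~\ref{estensione}. The technical core is the commutator bound for $E[P,x^\beta\partial^\alpha]u$ (Proposition~\ref{commutatore}), which requires a case split $|\alpha|\ge|\beta|$ vs.\ $|\beta|>|\alpha|$ and, in the latter case, a further dyadic splitting by a cutoff $\varphi_\beta(x)=\varphi(x/|\beta|^{1/2})$ localizing to $|x|\lesssim|\beta|^{1/2}$ and $|x|\gtrsim|\beta|^{1/2}$, each region handled by a different mechanism.

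Your approach via conjugation by $e^\phi$, $\phi\approx c|x|^2$, is plausible for the Gaussian part --- the analytic symbol bounds \eqref{Gsymbols} are exactly what would let you sum $p(x,\xi-2icx)=\sum_\gamma(-2icx)^\gamma(\gamma!)^{-1}\partial_\xi^\gamma p(x,\xi)$ and retain $\Gamma$--ellipticity for small $c$. But after that you must still handle the factor $\langle x\rangle^{|\beta|}$ in $w_\beta=e^\phi\langle x\rangle^{|\beta|}\partial^\beta u$, and here your sketch only accounts for commutators $[\partial^\gamma,P]$. The commutator with the spatial weight, or equivalently the conjugation $\langle x\rangle^{|\beta|}P_\phi\langle x\rangle^{-|\beta|}$, shifts the symbol by a term of size $|\beta|\langle x\rangle^{-1}$ relative to $(1+|x|+|\xi|)$; this is \emph{not} bounded uniformly in $|\beta|$, so the conjugated operator is not $\Gamma$--elliptic uniformly and its parametrix does not have $|\beta|$--independent bounds. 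This is precisely the obstruction that forces the paper's scale--splitting at $|x|\sim|\beta|^{1/2}$: for $|x|\gtrsim|\beta|^{1/2}$ the weight is tame, while for $|x|\lesssim|\beta|^{1/2}$ one exploits instead that $|x^{\tilde\delta}|\lesssim|\beta|^{|\tilde\delta|/2}$ on the support and recovers the loss through the weight $M(\alpha,\beta)$. Your identification of ``the main obstacle'' as uniform ellipticity in $c$ is therefore misplaced; the real obstacle is uniformity in $|\beta|$, and the statement that ``everything else is bookkeeping'' underestimates what remains.
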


\indent The linear case $F[u]=0$ deserves a special interest and will be treated in detail in Section \ref{esempi}. We emphasize the fact that the form of the domain of holomorphic extension as a sector is, in a sense, completely sharp, even for the model \eqref{12ma} (see Section \ref{esempi}). \par
Let us briefly compare our result with those in the existent literature. Several papers were devoted to the problem of holomorphic extension to a strip and exponential decay of solutions of certain semilinear elliptic equations arising in the theory of solitary waves or bound states, whose model is $-\Delta u+u=|u|^{p-1}u$, cf. Beresticky and Lions \cite{BeLi}, Bona and Grujic' \cite{bo1}, Bona and Li \cite{BL}, \cite{BL2}, Biagioni and Gramchev \cite{BG}, Gramchev \cite{Gr}, Cappiello, Gramchev and Rodino \cite{A39}, \cite{CGR} and the references therein; see also our recent paper \cite{cappiello-nicola} for the extension to a sector. However, as it is clear from our model \eqref{12ma}, we consider a different class of equations here, and in fact we deal with Gaussian, rather than exponential decay. Instead, as already mentioned, a class similar to the present one was considered in \cite{A18}, where the problem of the extension to a strip, combined with super-exponential decay, was addressed. The main novelties of the present work are the possibility of treating non-polynomial coefficients and nonlinearities, and the achievement of the optimal extension result, namely to a sector. \\
\indent The paper is organized as follows. In section
\ref{prelimi} we list some known factorial and binomial
estimates and we collect some basic properties of the
pseudodifferential operators introduced before, which will
be instrumental in the proofs of our results. In Section
\ref{secspazi} we introduce a suitable space of analytic
functions which exploits the two properties
\eqref{cla3intro} and \eqref{cla4intro}. Section
\ref{secdim} is devoted to the proof of Theorem
\ref{mainthm} which is based on an iterative scheme on the
space defined in Section \ref{secspazi}. Finally, in
Section \ref{esempi} we give some concluding remarks. In
particular, we read our results on the models introduced
above and treat in detail their application to the
Schr\"odinger operator in $\rd$ with a scattering metric.
Finally, we discuss the sharpness of our results for what
concerns the shape of the domain of the holomorphic
extension as a sector of $\C^{d}.$

\section{Notation and preliminary
results}\label{prelimi}
\subsection{Factorial and
binomial coefficients} We use
the usual multi-index
notation for factorial and
binomial coefficients. Hence,
for
$\alpha=(\alpha_1,\ldots,\alpha_d)\in\mathbb{N}^d$
we set
$\alpha!=\alpha_1!\ldots\alpha_d!$
and for
$\beta,\alpha\in\mathbb{N}^d$,
$\beta\leq\alpha$, we set
$\binom{\alpha}{\beta}=\frac{\alpha!}{\beta!(\alpha-\beta)!}$.\par
The following inequality is
standard and used often in
the sequel:
\begin{equation}\label{pr6}
\binom{\alpha}{\beta}\leq
2^{|\alpha|}.
\end{equation}
Also, we recall the identity
$$ \sum_{\stackrel{|\alpha'|=j}{\alpha'\leq\alpha}}\binom{\alpha}{\alpha'} = \binom{|\alpha|}{j}, \qquad j=0,1,\ldots, |\alpha|,$$
which follows from $\prod _{i=1}^d
(1+t)^{\alpha_i}=(1+t)^{|\alpha|}$, and
gives in particular
\begin{equation}\label{pr4}
\binom{\alpha}{\beta}\leq\binom{|\alpha|}{|\beta|},\quad
\alpha,\beta\in\mathbb{N}^d,\
\beta\leq\alpha.
\end{equation}
The last estimate implies in turn, by induction,
\begin{equation}\label{pr8}
\frac{\alpha!}{\delta_1!\ldots\delta_j!}\leq
\frac{|\alpha|!}{|\delta_1|!\ldots|\delta_j|!},\quad\alpha=\delta_1+\ldots+\delta_j,
\end{equation}
as well as
\begin{equation}\label{pr5}
\frac{\alpha!}{(\alpha-\beta)!}\leq
\frac{|\alpha|!}{|\alpha-\beta|!},\quad
\beta\leq\alpha.
\end{equation}
Finally we recall the
so-called inverse Leibniz'
formula:
\begin{equation}\label{pr3}
x^\beta\partial^\alpha
u(x)=\sum_{\gamma\leq\beta,\,\gamma\leq\alpha}\frac{(-1)^{|\gamma|}\beta!}{(\beta-\gamma)!}
\binom{\alpha}{\gamma}\partial^{\alpha-\gamma}(x^{\beta-\gamma}u(x)).
\end{equation}
\subsection{Pseudodifferential
Operators}
We collect here some basic properties of the class $\Gamma_{a}^{m}(\rd)$ defined by the
 estimates \eqref{Gsymbols} and of the corresponding operators \eqref{pr0}. Actually,
  for our purposes it is not necessary to develop a specific calculus for the analytic symbols of $\Gamma_{a}^{m}(\rd)$. We shall deduce the properties we need from those of the larger class $\Gamma^{m}(\rd)$ defined as the space of all functions $p \in C^\infty(\R^{2d})$ satisfying the following
estimates: for every
$\alpha,\beta\in\mathbb{N}^d$
there exists a constant
$C_{\alpha,\beta}>0$ such
that
\begin{equation}\label{defing}
|\partial^\beta_x\partial^\alpha_\xi
p(x,\xi)|\leq
C_{\alpha,\beta}(
1+|x|+|\xi|)^{m-|\alpha|}\langle x\rangle^{-|\beta|}
\end{equation}
for every $x,\xi\in\rd$.
Clearly $\Gamma_{a}^{m}(\rd) \subset \Gamma^{m}(\rd).$ We shall denote by
 ${\rm OP}\Gamma^{m}(\rd)$ (respectively ${\rm OP}\Gamma_{a}^{m}(\rd)$)  the class of pseudodifferential operators with symbol in $\Gamma^{m}(\rd)$ (respectively in $\Gamma_{a}^{m}(\rd)$).
 We endow $\Gamma^{m}(\rd)$ with the topology defined by the seminorms
 \[
 \|p\|^{(\Gamma^m)}_N=\sup_{|\alpha|+|\beta|\leq
 N}\sup_{(x,\xi)\in\rdd}\big\{|\partial^\alpha_\xi\partial^\beta_x
p(x,\xi)|(1+|x|+|\xi|)^{-m+|\alpha|}\langle x\rangle^{|\beta|}
\big\},\quad
N\in\mathbb{N}.
\]
The properties of $\Gamma^{m}(\rd)$ follow from the general Weyl-H{\"o}rmander's calculus in \cite[Chapter XVIII]{hormanderIII}; with the notation used there, $\Gamma^m(\rd)=S(M,g)$, with the weight $M(x,\xi)=(1+|x|^2+|\xi|^2)^{m/2}$ and the metric
 \[
 g_{x,\xi}(y,\eta)=\frac{|dy|^2}{1+|x|^2}+\frac{|d\eta|^2}{1+|x|^2+|\xi|^2}.
 \]
 We also refer the reader to \cite[Chapter 1]{nicola} for an elementary and self-contained presentation; with the notation in \cite[Definition 1.1.1]{nicola} we have $\Gamma^m(\rd)=S(M;\Phi,\Psi)$, with $M(x,\xi)$ as above and
 \[
 \Phi(x,\xi)=\langle x\rangle,\qquad \Psi(x,\xi)= (1+|x|^2+|\xi|^2)^{1/2}.
 \]
 \par
Now, if $p\in \Gamma^m(\rd)$
then $p(x,D)$ defines a
continuous map
$\cS(\rd)\to\cS(\rd)$ which
extends to a continuous map
$\cS'(\rd)\to\cS'(\rd)$. The
composition of two such
operators is therefore well
defined in $\cS(\rd)$ and in
$\cS'(\rd)$; more precisely,
if $p_1\in \Gamma^{m_1}(\rd)$
and $p_2\in \Gamma^{m_2}(\rd),$
then
$p_1(x,D)p_2(x,D)=p_3(x,D)$
with $p_3\in
\Gamma^{m_1+m_2}(\rd)$ and
the map $(p_1,p_2)\mapsto
p_3$ is continuous
$\Gamma^{m_1}(\rd)\times
\Gamma^{m_2}(\rd)\to
\Gamma^{m_3}(\rd)$. Moreover we have that
$\bigcap\limits_{m \in \R}\Gamma^{m}(\rd)= \cS(\R^{2d}).$ In
particular, operators with Schwartz symbols are globally
regularizing, i.e. they map continuously $\cS'(\rd)$ into
$\cS(\rd)$.
\par
Operators in ${\rm OP}\Gamma^m(\rd)$ are also bounded on certain weighted Sobolev spaces. We consider, for simplicity, the case of integer positive exponents (we will only need this case, see \cite{Shubin:1} for the general case). For $s\in\mathbb{N}$, we define
\begin{equation}\label{qs}
Q^s(\rd)=\{u\in L^2(\rd):\ \|u\|_{Q^s}:=\sum_{|\alpha|+|\beta|\leq s}\|x^\beta \partial^\alpha u\|_{L^2}<\infty\}.
\end{equation}
We recall that $\bigcap\limits_{s\in \N} Q^s(\rd) \cS(\rd).$ Now, if $P\in{\rm OP}\Gamma^m(\rd)$,
$m\in{\mathbb{Z}}$, $m\leq s$, we have $P: Q^s(\rd)\to
Q^{s-m}(\rd)$ continuously with
\[
\|p(x,D)\|_{\mathcal{B}(Q^s,Q^{s-m})}\leq
C\|p\|^{(\Gamma^m)}_{N}
\]
for suitable $C>0$, $N\in\mathbb{N}$ depending only on
$s,m$ and on the dimension $d$; see \cite[Proposition
1.5.5,Theorem 2.1.12]{nicola}. Moreover, for
$m\in{\mathbb{Z}}$, $m\leq s$, there exists an operator
$T\in{\rm OP}\Gamma^{-m}(\rd)$ which gives an isomorphism
$Q^{s-m}(\rd)\to Q^s(\rd)$.  \par
 We will also need the following
Schauder's estimates for the weighted Sobolev spaces
$Q^s(\rd)$ in \eqref{qs}.
\begin{proposition}\label{schauderQ}
Let $s\in\mathbb{N}$, $s>d/2$. There exists $C_s>0$ such that
\[
\|uv\|_{Q^s}\leq C_s\|u\|_{Q^s} \|v\|_{Q^s} \quad \forall u,v\in Q^s(\rd).
\]
\end{proposition}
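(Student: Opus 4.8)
The plan is to deduce the estimate from the classical multiplicative (Schauder--Moser) inequality in the ordinary Sobolev spaces, using the elementary identification of $Q^s(\rd)$, for $s\in\mathbb{N}$, with the space $H^s(\rd)\cap\langle x\rangle^{-s}L^2(\rd)$; more precisely,
\[
\|u\|_{Q^s}\ \asymp\ \|u\|_{H^s}+\|\langle x\rangle^{s}u\|_{L^2}.
\]
One inequality here is trivial: since $s$ is a positive integer, both $\langle\xi\rangle^{2s}$ and $\langle x\rangle^{2s}$ are dominated by $\sum_{|\gamma|\leq s}|\xi^\gamma|^2$ and $\sum_{|\gamma|\leq s}|x^\gamma|^2$ respectively, so (by Plancherel) $\|u\|_{H^s}+\|\langle x\rangle^{s}u\|_{L^2}\lesssim\|u\|_{Q^s}$. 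The converse inequality is the standard description of the Shubin--Sobolev space: by the isomorphism recalled before Proposition \ref{schauderQ} (take $m=s$ in the statement that $\mathrm{OP}\Gamma^{-m}(\rd)$ contains an isomorphism $Q^{s-m}(\rd)\to Q^s(\rd)$) one has $\|u\|_{Q^s}\asymp\|T^{-1}u\|_{L^2}$ for a suitable $\Gamma$-elliptic $T\in\mathrm{OP}\Gamma^{-s}(\rd)$, whose parametrix has symbol comparable to $(1+|x|^2+|\xi|^2)^{s/2}\asymp\langle x\rangle^{s}+\langle\xi\rangle^{s}$; comparing symbols yields the claim. Alternatively, the bound $\|x^\beta\partial^\alpha u\|_{L^2}\lesssim\|u\|_{H^s}+\|\langle x\rangle^{s}u\|_{L^2}$ for $|\alpha|+|\beta|\leq s$ can be obtained directly by repeated integration by parts, trading powers of $x$ for derivatives (see \cite{Shubin:1}, \cite{nicola}).

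Granting this, and using $s>d/2$, we estimate $\|uv\|_{Q^s}\lesssim\|uv\|_{H^s}+\|\langle x\rangle^{s}(uv)\|_{L^2}$ by treating the two summands separately. For the first, the classical Schauder estimate in $H^s(\rd)$ (valid because $s>d/2$) gives $\|uv\|_{H^s}\leq C\|u\|_{H^s}\|v\|_{H^s}\leq C\|u\|_{Q^s}\|v\|_{Q^s}$. For the second, by H\"older's inequality and the Sobolev embedding $H^s(\rd)\hookrightarrow L^\infty(\rd)$ (again $s>d/2$),
\[
\|\langle x\rangle^{s}(uv)\|_{L^2}\leq\|\langle x\rangle^{s}u\|_{L^2}\,\|v\|_{L^\infty}\lesssim\|\langle x\rangle^{s}u\|_{L^2}\,\|v\|_{H^s}\lesssim\|u\|_{Q^s}\|v\|_{Q^s}.
\]
Adding the two bounds gives $\|uv\|_{Q^s}\leq C_s\|u\|_{Q^s}\|v\|_{Q^s}$. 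The manipulations are legitimate first for $u,v\in\cS(\rd)$, and then in general by density of $\cS(\rd)$ in $Q^s(\rd)$.

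The essential content sits in the norm equivalence $\|u\|_{Q^s}\asymp\|u\|_{H^s}+\|\langle x\rangle^{s}u\|_{L^2}$ — that is, in the fact that $H^s$-regularity together with polynomial $L^2$-decay of order $s$ already forces each mixed quantity $x^\beta\partial^\alpha u$, $|\alpha|+|\beta|\leq s$, to be square integrable; this is where either the global pseudodifferential calculus recalled above, or explicit commutator/integration-by-parts arguments, must be invoked. Once this is available, the algebra property is an immediate consequence of the Euclidean Schauder estimate and the embedding $H^s\hookrightarrow L^\infty$. A fully self-contained alternative avoids the reduction: apply the Leibniz rule to $x^\beta\partial^\alpha(uv)$, distribute the monomial $x^\beta$ and the derivatives among the two factors, and bound each resulting product by H\"older's inequality together with the embeddings $Q^\sigma(\rd)\hookrightarrow L^p(\rd)$; when $s>d$ a plain $L^2\!\cdot\!L^\infty$ splitting suffices for every term, while for $d/2<s\leq d$ finitely many borderline terms require intermediate exponents $2<p<\infty$, and the only delicate point becomes the combinatorial bookkeeping of the splitting.
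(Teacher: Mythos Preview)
Your argument is correct, and the route you take is genuinely different from the paper's. The paper proceeds directly: it expands $x^\beta\partial^\alpha(uv)$ by Leibniz, writes each term as $x^\beta\partial^{\alpha-\gamma}u\cdot\partial^\gamma v$, applies H\"older with exponents $p,q$ chosen so that $\tfrac12<\tfrac1p+\tfrac{|\gamma|}{d}$ and $\tfrac12<\tfrac1q+\tfrac{s-|\gamma|}{d}$ (possible precisely because $s>d/2$), and then uses the Sobolev embeddings $H^{|\gamma|}\hookrightarrow L^p$, $H^{s-|\gamma|}\hookrightarrow L^q$ to bound the factors by $\|x^\beta\partial^{\alpha-\gamma}u\|_{H^{|\gamma|}}\lesssim\|u\|_{Q^s}$ and $\|\partial^\gamma v\|_{H^{s-|\gamma|}}\lesssim\|v\|_{Q^s}$. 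This is exactly the ``self-contained alternative'' you sketch in your last paragraph; the paper carries it out and finds no borderline cases---the choice of $p,q$ always allows strict inequalities, so no endpoint embeddings are needed.

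Your main argument instead front-loads the work into the norm equivalence $\|u\|_{Q^s}\asymp\|u\|_{H^s}+\|\langle x\rangle^s u\|_{L^2}$, after which the algebra property is trivial. This is legitimate---the equivalence is a standard fact about Shubin--Sobolev spaces---but your pseudodifferential justification within the paper's own $\Gamma$-calculus is a bit loose: the symbol $\langle\xi\rangle^s+\langle x\rangle^s$ does not belong to the class $\Gamma^s(\rd)$ as defined here (the $\xi$-derivative estimates fail for $|\alpha|>s$ when $|x|$ is large), so one really does need the integration-by-parts argument or the standard Shubin calculus that you cite as a backup. The paper's direct approach, by contrast, stays entirely within the definitions already given and avoids importing this equivalence; the trade-off is the H\"older exponent bookkeeping.
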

\begin{proof}
We have
\begin{align*}
\|uv\|_{Q^s}&=\sum_{|\alpha|+|\beta|\leq s}\|x^\beta \partial^\alpha (uv)\|_{L^2}=\sum_{|\alpha|+|\beta|\leq s}\sum_{\gamma\leq\alpha}\binom{\alpha}{\gamma}\|x^\beta \partial^{\alpha-\gamma} u \cdot \partial^\gamma v\|_{L^2}\\
&\leq 2^s \sum_{|\alpha|+|\beta|\leq s}\sum_{\gamma\leq\alpha}\|x^\beta \partial^{\alpha-\gamma} u\|_{L^p}  \|\partial^\gamma v\|_{L^q},
\end{align*}
where $1\leq p,q\leq\infty$ are chosen to satisfy $\frac{1}{p}+\frac{1}{q}=\frac{1}{2}$, and $\frac{1}{2}<\frac{1}{p}+\frac{|\gamma|}{d}$, $\frac{1}{2}<\frac{1}{q}+\frac{s-|\gamma|}{d}$. This is possibile because $s>\frac{d}{2}$. Then, by the Sobolev embeddings we have
\begin{equation}\label{8ma1}
\|uv\|_{Q^s}\leq C_s\sum_{|\alpha|+|\beta|\leq s}\sum_{\gamma\leq\alpha}\|x^\beta \partial^{\alpha-\gamma} u\|_{H^{|\gamma|}}  \|\partial^\gamma v\|_{H^{s-|\gamma|}}.
\end{equation}
On the other hand,
\begin{equation}\label{8ma2}
\|x^\beta \partial^{\alpha-\gamma} u\|_{H^{|\gamma|}} \asymp\sum_{|\mu|\leq|\gamma|}\|\partial^\mu \left(x^\beta \partial^{\alpha-\gamma}u\right)\|_{L^2}\leq C'_s\|u\|_{Q^s}.
\end{equation}
Similarly,
\begin{equation}\label{8ma3}
\|\partial^\gamma v\|_{H^{s-|\gamma|}}\leq C''_s\|u\|_{Q^s}.
\end{equation}
Combining \eqref{8ma1}, \eqref{8ma2} and \eqref{8ma3} we
get the desired result.
\end{proof}

As a technical tool, we will also use the scale of weighted
Sobolev spaces
\begin{equation}\label{25-0}
H^{s_1,s_2}(\R^d)= \{u \in \mathcal{S}'(\rd):
\|u\|_{H^{s_1,s_2}}:=\|\px^{s_2}u\|_{s_1}<\infty\},
\end{equation}
defined for $s_1,s_2\in{\R}$. In particular, we need the
following result (see e.g. \cite[Definition 3.1.1 and
Theorem 3.1.5]{nicola}).
\begin{proposition}\label{25pro}
Consider a symbol $p(x,\xi)$ satisfying the estimates
\begin{equation}\label{25-1}
|\partial^\beta_x\partial^\alpha_\xi p(x,\xi)|\leq
C_{\alpha,\beta}\langle x\rangle^{n-|\beta|}\langle
\xi\rangle^{m-|\alpha|},\quad\forall\alpha,\beta\in\N^d,\
x\in\rd.
\end{equation}
Then the corresponding operator $p(x,D)$ is bounded
$H^{s_1,s_2}(\rd)\to H^{s_1-m,s_2-n}(\rd)$ for every
$s_1,s_2\in\R$, with operator norm estimated by an upper
bound of a finite number of the constants $C_{\alpha,\beta}$
appearing in \eqref{25-1}.
\end{proposition}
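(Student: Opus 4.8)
The statement to prove is the boundedness of $p(x,D)$ from $H^{s_1,s_2}(\rd)$ to $H^{s_1-m,s_2-n}(\rd)$ for a symbol satisfying the product-type estimates \eqref{25-1}. The natural strategy is to reduce to the $L^2$-boundedness of a symbol in the Hörmander class $S^0_{1,0}$ (or the corresponding uniformly-bounded class with two separate scales $\langle x\rangle$ and $\langle\xi\rangle$) by conjugating with the obvious order-reducing operators. Concretely, set $\Lambda^{m}=\langle D\rangle^{m}$ acting as multiplication by $\langle\xi\rangle^{m}$ on the Fourier side, and let $\langle x\rangle^{n}$ denote multiplication by that function; then $u\mapsto \langle x\rangle^{s_2}\langle D\rangle^{s_1}u$ is, by definition \eqref{25-0}, an isometry $H^{s_1,s_2}(\rd)\to L^2(\rd)$ (up to equivalence of norms), and similarly for the target space. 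So it suffices to show that the operator
\[
A:=\langle x\rangle^{s_2-n}\langle D\rangle^{s_1-m}\, p(x,D)\, \langle D\rangle^{-s_1}\langle x\rangle^{-s_2}
\]
is bounded on $L^2(\rd)$, with norm controlled by finitely many of the constants $C_{\alpha,\beta}$ in \eqref{25-1}.

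\textbf{Key steps.} First I would verify that the class of symbols defined by \eqref{25-1} — call it temporarily $S^{m,n}$ — is stable under the operations we need: products of a symbol in $S^{m,n}$ with $\langle x\rangle^{a}$ lie in $S^{m,n+a}$, products with $\langle\xi\rangle^{b}$ lie in $S^{m+b,n}$, and — the only non-elementary point — the composition of two such pseudodifferential operators has symbol in the expected class, with seminorms bounded by products of seminorms of the factors. This composition statement is exactly what is recorded for the model class in \cite[Theorem 3.1.5]{nicola} (cited in the Proposition), or can be derived from the general Weyl–Hörmander calculus of \cite[Chapter XVIII]{hormanderIII} with the metric $g_{x,\xi}=\langle x\rangle^{-2}|dx|^{2}+\langle\xi\rangle^{-2}|d\xi|^{2}$ and weights $\langle x\rangle$, $\langle\xi\rangle$, which are admissible (slowly varying, temperate, $g$-continuous). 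Granting that, the operator $A$ above is a pseudodifferential operator whose symbol lies in $S^{0,0}$, i.e. satisfies
\[
|\partial_{x}^{\beta}\partial_{\xi}^{\alpha}a(x,\xi)|\le C_{\alpha,\beta}\langle x\rangle^{-|\beta|}\langle\xi\rangle^{-|\alpha|}\le C_{\alpha,\beta},
\]
uniformly in $(x,\xi)$, with the $C_{\alpha,\beta}$ explicitly a finite product of the constants appearing in \eqref{25-1} (together with universal constants coming from the fixed symbols $\langle x\rangle^{\pm s_2}$, $\langle\xi\rangle^{\pm s_1}$). The second step is then the Calderón–Vaillancourt theorem: a symbol with $|\partial_{x}^{\beta}\partial_{\xi}^{\alpha}a(x,\xi)|\le C_{\alpha,\beta}$ for $|\alpha|,|\beta|\le$ a dimensional constant $k_d$ gives an $L^2$-bounded operator, with norm bounded by $\max_{|\alpha|,|\beta|\le k_d}C_{\alpha,\beta}$. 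Composing back with the isometries identifying $H^{s_1,s_2}$ and $H^{s_1-m,s_2-n}$ with $L^2$ yields the claim, and tracking the constants through the (finitely many) composition formulas and through Calderón–Vaillancourt gives the quantitative bound on the operator norm in terms of the $C_{\alpha,\beta}$ of \eqref{25-1}.

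\textbf{Main obstacle.} The only substantive point is the symbolic calculus for the two-scale class $S^{m,n}$ — specifically that composition stays inside the class with the right (product) bookkeeping of seminorms, so that the final constant is genuinely controlled by finitely many of the original $C_{\alpha,\beta}$. In a fully self-contained write-up one would either invoke the admissibility of the metric $g$ above in the Weyl–Hörmander framework (checking slow variation, temperance and the uncertainty principle $g\le g^{\sigma}$, all of which are immediate for this diagonal metric since $\langle x\rangle\langle\xi\rangle\ge 1$) or carry out a direct oscillatory-integral expansion of the composed symbol with remainder estimates. Since the Proposition is explicitly stated with a reference to \cite[Definition 3.1.1, Theorem 3.1.5]{nicola}, the cleanest route for the proof is to note that the class in \eqref{25-1} coincides with the one there and that both the composition rule and the $L^2$-boundedness of order-zero operators are established in that reference; the reduction argument above is then a two-line conjugation plus a citation, with the constant statement following from the continuity of the composition map (which is part of the cited results) and of the $L^2$-boundedness estimate.
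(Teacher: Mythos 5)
The paper does not actually prove this Proposition: it is stated with a pointer to \cite[Definition 3.1.1 and Theorem 3.1.5]{nicola}, so there is no internal argument to compare against. Your outline is the standard proof that the cited reference carries out, and it is sound: conjugation by the order-reducing operators $\langle x\rangle^{\pm\cdot}$ and $\langle D\rangle^{\pm\cdot}$ reduces the claim to $L^2$-boundedness of an operator with symbol in the two-scale class of order $(0,0)$, which follows from Calder\'on--Vaillancourt (in fact from the classical $S^0_{1,0}$ theorem, since the symbol gains $\langle\xi\rangle^{-|\alpha|}$ on $\xi$-derivatives), and the quantitative dependence on finitely many $C_{\alpha,\beta}$ comes from the bilinear continuity of the composition map on seminorms. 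You correctly isolate the one non-trivial ingredient, namely that the class in \eqref{25-1} is closed under composition with product control of seminorms; both routes you mention (admissibility of the diagonal metric $\langle x\rangle^{-2}|dx|^2+\langle\xi\rangle^{-2}|d\xi|^2$ in the Weyl--H\"ormander framework, or a direct expansion with remainder estimates) are valid. One cosmetic point: with the paper's definition \eqref{25-0} the natural isometry onto $L^2$ is $u\mapsto\langle D\rangle^{s_1}\langle x\rangle^{s_2}u$ rather than $\langle x\rangle^{s_2}\langle D\rangle^{s_1}u$; the two choices differ by an order-$(0,0)$ operator, hence by an $L^2$-bounded factor, so your conjugation still works, but it is worth ordering the factors consistently with the norm you are using. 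As written the proposal is a plan rather than a complete proof, but every step it defers to is a correct and standard fact.
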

By using Schauder's estimates in the standard Sobolev
spaces and the inclusion $H^{s_1,s_2}(\rd)\hookrightarrow
H^{s_1}(\rd)$, valid if $s_2\geq0$, one also gets
\begin{equation}\label{25-3}
\|uv\|_{H^{s_1,s_2}}\leq C_{s_1,s_2}\|u\|_{H^{s_1,s_2}}
\|v\|_{H^{s_1,s_2}}\qquad s_1>\frac{d}{2},\quad s_2\geq0.
\end{equation}
A symbol $p\in \Gamma^{m}(\rd)$
(and the corresponding
operator) is said to be {\it
$\Gamma$-elliptic} if  it
satisfies the condition \eqref{Gammaell}.
\par
The notion of $\Gamma$-ellipticity for an operator in
${\rm OP}\Gamma^m(\rd)$ will be crucial in the subsequent
arguments because it guaranties the existence of a
parametrix $E\in{\rm OP}\Gamma^{-m}(\rd)$. Namely we have
the following result, see \cite[Theorem 1.3.6]{nicola} for the proof.
\begin{proposition} \label{para}
Let $p\in \Gamma^{m}(\rd)$ be $\Gamma$-elliptic. Then there exists an operator $E\in{\rm OP}\Gamma^{-m}(\rd)$ such that $EP=I+R$ and $PE=I+R'$, where
$R,R'$ are {\rm globally}
regularizing
pseudodifferential
operators, i.e. $R$ and $R'$
are continuous maps
$\cS'(\rd)\to \cS(\rd)$. The operator $E$ is said to be a parametrix for $P$.
\end{proposition}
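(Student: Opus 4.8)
The plan is the classical elliptic parametrix construction, carried out inside the $\Gamma$-calculus recalled above. First I would fix a cutoff $\chi\in C^\infty(\R^{2d})$ with $0\le\chi\le1$, equal to $1$ for $|x|+|\xi|\ge 2R$ and vanishing for $|x|+|\xi|\le R$, with $R$ as in \eqref{Gammaell}, and set $q_0(x,\xi):=\chi(x,\xi)/p(x,\xi)$ (with $q_0:=0$ where $p$ might vanish, which is harmless since $\chi$ vanishes there). The $\Gamma$-ellipticity lower bound $|p(x,\xi)|\gtrsim(1+|x|+|\xi|)^m$ on $\mathrm{supp}\,\chi$, together with the Leibniz and Fa\`a di Bruno rules and the estimates \eqref{defing} for $p$, gives $q_0\in\Gamma^{-m}(\rd)$: any derivative of $1/p$ is a finite sum of terms $p^{-k-1}\prod_i\partial^{\gamma_i}p$ with $\sum_i\gamma_i$ equal to the total derivative order, and each such term carries exactly the weight $(1+|x|+|\xi|)^{-m-|\alpha|}\langle x\rangle^{-|\beta|}$, while derivatives falling on $\chi$ produce compactly supported, hence Schwartz, contributions.

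Next I would iterate, using the composition formula of the calculus, i.e.\ the asymptotic expansion $\sigma(a(x,D)b(x,D))\sim\sum_\alpha\frac{1}{\alpha!}\partial_\xi^\alpha a\,D_x^\alpha b$ with remainders in the lower-order classes (this is part of the Weyl--H\"ormander calculus for $S(M,g)$ with $M,g$ as displayed above, the gain being governed by the Planck function $\asymp(1+|x|+|\xi|)^{-1}$). Writing $\#$ for this symbol product, one has $q_0\# p=1-t_1$ with $t_1\in\Gamma^{-1}(\rd)$: the leading term is $\chi$, whose difference from $1$ is compactly supported, and every term with $|\alpha|\ge1$ lies in $\Gamma^{-m-|\alpha|}\cdot\Gamma^{m}\subset\Gamma^{-1}$. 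Inductively, given $q_0,\dots,q_{j-1}$ with $(q_0+\dots+q_{j-1})\#p=1-t_j$, $t_j\in\Gamma^{-j}(\rd)$, set $q_j:=q_0\,t_j\in\Gamma^{-m-j}(\rd)$; then $q_j\# p=t_j\,(q_0 p)+(\text{terms in }\Gamma^{-j-1})$, and since $q_0p=1+(\text{terms in }\Gamma^{-1})$ this equals $t_j+(\text{terms in }\Gamma^{-j-1})$, whence $(q_0+\dots+q_j)\#p=1+(\text{terms in }\Gamma^{-j-1})=:1-t_{j+1}$. A Borel-type asymptotic summation in $\Gamma^{-m}(\rd)$, valid in this calculus, then produces $q\in\Gamma^{-m}(\rd)$ with $q-\sum_{j<N}q_j\in\Gamma^{-m-N}(\rd)$ for every $N$, so that $q\# p-1\in\bigcap_N\Gamma^{-N}(\rd)=\cS(\R^{2d})$. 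Hence $E_L:=q(x,D)\in{\rm OP}\Gamma^{-m}(\rd)$ satisfies $E_L P=I+R_L$ with $R_L$ of Schwartz symbol, which is globally regularizing by the remarks preceding the Proposition.

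An entirely symmetric construction (summing $q_0\# p$ on the right, or passing to transposed operators) yields a right parametrix $E_R\in{\rm OP}\Gamma^{-m}(\rd)$ with $PE_R=I+R_R$, $R_R$ globally regularizing. The two are then reconciled by the usual associativity argument: from $E_L P=I+R_L$ and $PE_R=I+R_R$ we get $E_L=E_L(PE_R-R_R)=(I+R_L)E_R-E_L R_R=E_R+(R_L E_R-E_L R_R)$, and since composing a globally regularizing operator with any operator in ${\rm OP}\Gamma^{k}(\rd)$ (in either order) again gives a globally regularizing operator, $E_L-E_R$ is globally regularizing. Taking $E:=E_L$ we thus obtain $EP=I+R$ with $R:=R_L$, and $PE=P(E_R+(E_L-E_R))=I+R_R+P(E_L-E_R)=:I+R'$ with $R'$ globally regularizing as well.

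The only genuinely nontrivial ingredients are the composition formula with quantitative remainder control and the asymptotic-summation (Borel) lemma in the anisotropic class $\Gamma^{m}(\rd)$; these are precisely the parts of the $S(M,g)$ calculus (with $M$, $g$ as displayed) that require care, and they constitute the main technical obstacle. Everything else is bookkeeping with the symbol estimates \eqref{defing} and with the mapping property $\bigcap_m\Gamma^m(\rd)=\cS(\R^{2d})$.
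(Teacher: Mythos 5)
Your construction is correct and is essentially the same argument as the one the paper relies on: the paper does not prove Proposition \ref{para} inline but refers to \cite[Theorem 1.3.6]{nicola}, whose proof is precisely this classical scheme (cutoff $\chi/p$ using the $\Gamma$-ellipticity bound, Neumann-type iteration via the asymptotic composition formula of the $S(M,g)$ calculus, Borel summation in $\Gamma^{-m}(\rd)$, and reconciliation of left and right parametrices modulo operators with Schwartz symbols). All the steps you invoke — the Leibniz/Fa\`a di Bruno estimate placing $\chi/p$ in $\Gamma^{-m}(\rd)$, the gain $(1+|x|+|\xi|)^{-1}$ per term in the expansion, $\bigcap_N\Gamma^{-N}(\rd)=\cS(\R^{2d})$, and the stability of globally regularizing operators under composition with ${\rm OP}\Gamma^{k}(\rd)$ — are available in the calculus the paper recalls, so the argument goes through.
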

\par
Finally
we point out for further
reference the following
formulae, which can be
verified by a direct
computation: for
$\alpha,\beta\in\mathbb{N}^d, u \in \cS(\rd):$
\begin{equation}\label{pr1}
x^\beta
p(x,D)u=\sum_{\gamma\leq\beta}
(-1)^{|\gamma|}\binom{\beta}{\gamma}(D^\gamma_\xi
p)(x,D)(x^{\beta-\gamma}u),
\end{equation}
\begin{equation}\label{pr2}
\partial^\alpha p(x,D)u=\sum_{\delta\leq\alpha}
\binom{\alpha}{\delta}(\partial^\delta_x
p)(x,D)\partial^{\alpha-\delta}u.
\end{equation}
\section{A space of analytic
functions}\label{secspazi} We introduce a space of
real-analytic functions in $\rd$, which extend
holomorphically on a sector in $\C^d$ and display there a
Gaussian decay.
\begin{definition}\label{classisg}
We denote by $\sg$ the space of all
functions $f\in C^\infty(\rd)$
satisfying the following condition:
there exists a constant $C>0$ such that
\begin{equation}\label{cla0}
|x^\beta\partial^\alpha f(x)|\leq
C^{|\alpha|+|\beta|+1}\mab, \quad{\rm for\ all}\
\alpha,\,\beta\in\N^d,
\end{equation}
where
\begin{equation}\label{cla0bis}
\mab=|\alpha|!^{1/2}\max\{|\alpha|,|\beta|\}!^{1/2}.
\end{equation}
\end{definition}
It is easy to verify that the space $\sg$ is closed under differentiation.

\begin{theorem}\label{estensione}
Let $f\in\sg$. Then $f$ extends to a holomorphic function
$f(x+iy)$ in the sector
\begin{equation}\label{cla3}
\mathcal{C}_\eps=\{z=x+iy\in\mathbb{C}^d:\ |y|<\eps(1+|x|)\}
\end{equation}
 of $\mathbb{C}^d$ for some $\eps>0$, satisfying there the estimates
\begin{equation}\label{cla4}
|f(x+iy)|\leq Ce^{-c|x|^2},
\end{equation}
for some constants $C>0$, $c>0$. \end{theorem}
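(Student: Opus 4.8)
The plan is to show that the estimates \eqref{cla0} force both a holomorphic extension to the sector $\mathcal{C}_\eps$ and the Gaussian decay \eqref{cla4}, by Taylor-expanding $f$ at a suitable base point and using Stirling-type bounds on $\mab$. First I would record the elementary consequence of \eqref{cla0bis} that $\mab \le (|\alpha|+|\beta|)!$ (so $f$ is real-analytic), but more importantly the sharper bound $\mab \lesssim |\alpha|!^{1/2}(|\alpha|+|\beta|)!^{1/2}$; the point is that, while the $x^\beta$-decay is only of Gevrey type $1$, the $\partial^\alpha$-regularity is of Gevrey type $1/2$, which is exactly the Hermite-type behaviour and is what produces a \emph{sector} rather than just a strip.

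The core step is a Taylor expansion of $f$ about a point $x_0\in\rd$: for $z=x_0+w$ with $w\in\cd$,
\begin{equation}\label{planTaylor}
f(x_0+w)=\sum_{\alpha\in\N^d}\frac{\partial^\alpha f(x_0)}{\alpha!}w^\alpha.
\end{equation}
Using \eqref{cla0} with $\beta$ chosen to absorb a power of $\langle x_0\rangle$, one estimates $|\partial^\alpha f(x_0)|\le C^{|\alpha|+1}\langle x_0\rangle^{-|\beta|}|\alpha|!^{1/2}\max\{|\alpha|,|\beta|\}!^{1/2}$; balancing the choice $|\beta|\sim|\alpha|$ against $\langle x_0\rangle^{-|\beta|}$ and invoking Stirling gives roughly $|\partial^\alpha f(x_0)|\lesssim C^{|\alpha|}\alpha!\,\langle x_0\rangle^{-|\alpha|}\,|\alpha|!^{-1/2}\cdot(\text{something like }e^{-c|x_0|^2})$. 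Plugging this into \eqref{planTaylor} shows the series converges absolutely for $|w|<\eps\langle x_0\rangle$, i.e.\ on a ball whose radius grows linearly in $|x_0|$; taking the union over $x_0$ gives a holomorphic extension to $\mathcal{C}_\eps$ after shrinking $\eps$. To get \eqref{cla4} I would first prove the pointwise Gaussian decay of $f$ itself on $\rd$: from \eqref{cla0} with $\alpha=0$ one has $|x|^{2j}|f(x)|\le \sum_{|\beta|=2j}\binom{2j}{\beta}|x^\beta f(x)|\le C(2dC)^{2j}j!$ (using $\max\{0,|\beta|\}!=|\beta|!=(2j)!$ and $\mab=|0|!^{1/2}(2j)!^{1/2}=(2j)!^{1/2}$, then $(2j)!^{1/2}\lesssim (Cj)^j$ via Stirling), hence $|f(x)|\le C\inf_j (C/|x|^2)^j j! \lesssim e^{-c|x|^2}$ for $|x|$ large. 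Then the same Taylor argument, now keeping track of the Gaussian factor $e^{-c|x_0|^2}$ that survives in the bound for $\partial^\alpha f(x_0)$, upgrades this to \eqref{cla4} on the whole sector (with a smaller $c$).

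The main obstacle is the bookkeeping in the balancing step: one must choose $|\beta|$ as a function of $|\alpha|$ and $|x_0|$ so that the factor $\langle x_0\rangle^{-|\beta|}\max\{|\alpha|,|\beta|\}!^{1/2}$ is simultaneously (i) summable against $w^\alpha/\alpha!$ on a ball of radius $\sim\langle x_0\rangle$ and (ii) small like a Gaussian in $|x_0|$; the natural choices are $|\beta|\approx|\alpha|$ in the regime $|\alpha|\gtrsim|x_0|^2$ and $|\beta|\approx |x_0|^2$ otherwise, and reconciling these two regimes while tracking all the $C^{|\alpha|}$-type constants (so that the final $\eps$ and $c$ come out positive) is the delicate part. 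Everything else — the Cauchy estimates turning convergence of \eqref{planTaylor} into holomorphy, the elementary manipulations with $\binom{\alpha}{\beta}\le 2^{|\alpha|}$ from \eqref{pr6}, and Stirling's formula $n!^{1/2}\le C^n n^{n/2}$ — is routine.
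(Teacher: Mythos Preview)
Your plan is correct and rests on the same idea as the paper --- Taylor expand $f$ about a real base point and control $|\partial^\alpha f(x_0)|$ via \eqref{cla0} with a well-chosen $\beta$ --- but the paper organizes the bookkeeping in a way that sidesteps precisely the regime-splitting you flag as ``the delicate part''. Rather than optimizing $|\beta|$ separately according to whether $|\alpha|\gtrsim|x_0|^2$ or $|\alpha|\lesssim|x_0|^2$, the paper multiplies $|x^\beta\partial^\alpha f(x)|$ (with $|\beta|\le|\alpha|$) by $e^{c|x|^2}=\sum_{n\ge0}(c|x|^2)^n/n!$, expands $|x|^{2n}\lesssim\sum_{|\gamma|=n}|x^{2\gamma}|$, and applies \eqref{cla0} to each monomial $x^{\beta+2\gamma}\partial^\alpha f$; using $(|\alpha|+2|\gamma|)!\le 2^{|\alpha|+4|\gamma|}|\alpha|!\,|\gamma|!^2$, the sum over $\gamma$ converges geometrically for $c$ small, giving directly
\[
|x^\beta\partial^\alpha f(x)|\le C^{|\alpha|+1}|\alpha|!\,e^{-c|x|^2}\qquad(|\beta|\le|\alpha|).
\]
Taking $|\beta|=|\alpha|$ then yields $|\partial^\alpha f(x)|\le C^{|\alpha|+1}\alpha!\,\langle x\rangle^{-|\alpha|}e^{-c|x|^2}$ in one stroke, and a single Taylor-series argument delivers both the holomorphic extension to $\mathcal{C}_\eps$ and the Gaussian bound \eqref{cla4} simultaneously. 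Your two-regime optimization would arrive at the same endpoint, but the exponential-expansion trick performs the optimization over $|\beta|$ automatically (summing over all $\gamma$ is, in effect, averaging over all choices of extra $x$-weight) and removes the need to reconcile the cases or track the threshold constants.
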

\begin{proof}
First we show the estimates
\begin{equation}\label{cla5}
|x^\beta\partial^\alpha f(x)|\leq C^{|\alpha|+1}|\alpha|!
e^{-c|x|^2},\quad {\rm for}\ |\beta|\leq|\alpha|.
\end{equation}
Indeed, since $|x|^{2n}\leq
k^n\sum_{|\gamma|=n}|x^{2\gamma}|$ for a constant $k>0$
depending only on the dimension $d$, by \eqref{cla0} we
have (assuming $C\geq1$ in \eqref{cla0})
\begin{align*}
e^{c|x|^2} |x^\beta\partial^\alpha f(x)|&=\sum_{n=0}^\infty
\frac{(c|x|^2)^n}{n!}|x^\beta\partial^\alpha
 f(x)|\\
 &\leq \sum_{n=0}^\infty (ck)^n\sum_{|\gamma|=n}\frac{1}{|\gamma|!}|x^{\beta+2\gamma}\partial^\alpha f(x)|\\
 &\leq \sum_{n=0}^\infty (ck)^n\sum_{|\gamma|=n} C^{2|\alpha|+2|\gamma|+1}
 \frac{|\alpha|!^{1/2}(|\alpha|+2|\gamma|)!^{1/2}}{|\gamma|!}\\
 &\leq \sum_{n=0}^\infty (ck)^n\sum_{|\gamma|=n} (2C)^{2|\alpha|+2|\gamma|+1}{|\alpha|!},
 \end{align*}
 where in the last step we used the inequality
 $(|\alpha|+2|\gamma|)!\leq
 2^{|\alpha|+4|\gamma|}|\alpha|!|\gamma|!^2$, which follows
by applying twice \eqref{pr6}.
 Since the number of multi-indices $\gamma$ satisfying
  $|\gamma|=n$ does not exceed $2^{d+n-1}$, we get \eqref{cla5} for a new constant $C$, if $c$ is small enough. Now, \eqref{cla5} and the estimate $|\alpha|!\leq d^{|\alpha|}\alpha!$ give
  \begin{equation}\label{cla6}
 |\partial^\alpha f(x)|\leq C^{|\alpha|+1}\alpha!\langle x\rangle^{-|\alpha|}e^{-c|x|^2},
 \end{equation}
  for a new constant $C>0$. By considering the Taylor expansion of the function
  $f$ centered in any $x\in\rd$ and using the estimates in \eqref{cla6} we obtain the desired extension property in a sector of the type \eqref{cla3} together  with the estimates \eqref{cla4}.
  \end{proof}

In the sequel we will use the
following characterization of
the space $\sg$ in terms of
$Q^s$-based norms.\par Set,
for $f\in\cS'(\rd)$,
\begin{equation}\label{accaenne2}
S_\infty^{s,\eps}[f]=\sum_{\al,\,\beta \in \N^d}\frac{\eab}{\mab}
\|x^\be\partial^\al f\|_{Q^s},
\end{equation}
where $\mab$ is defined in \eqref{cla0bis}.
\begin{proposition}\label{stima0}
Let $f\in\sg$. Then for every
$s\in\N$ there exists $\eps>0$ such
that $S^{s,\eps}_\infty[f]<\infty$.\par
In the opposite direction, if for some $s\in\N$ there exists $\eps>0$ such that $S^{s,\eps}_{\infty}[f]<\infty$, then $f\in\sg$.
\end{proposition}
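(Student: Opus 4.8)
The plan is to prove the two implications separately; both reduce, after one substantive preliminary step, to bookkeeping with the elementary inequalities \eqref{pr6} and \eqref{pr5} of Section~\ref{prelimi} and with the fact that $\mab$ changes only by a factor $C_N^{|\alpha|+|\beta|}$ (with $C_N$ depending on $N$ and $d$) when $\alpha,\beta$ are shifted by at most $N$; this one checks directly from \eqref{cla0bis} and \eqref{pr6}, being careful with the $\max$. For the first implication, let $f\in\sg$. The key preliminary step is to upgrade \eqref{cla0} to the \emph{Gaussian-weighted} estimate
\[
|x^\beta\partial^\alpha f(x)|\leq C^{|\alpha|+|\beta|+1}\mab\,e^{-c|x|^2}\qquad\text{for all }\alpha,\beta\in\N^d,
\]
for suitable $C>0$, $c>0$, \emph{retaining the sharp weight} $\mab$. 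This is obtained by rerunning the computation in the proof of Theorem~\ref{estensione} while keeping $\beta$ explicit: multiply by $e^{c|x|^2}$, expand the exponential, write $|x|^{2n}=\sum_{|\gamma|=n}\binom{n}{\gamma}x^{2\gamma}$, apply \eqref{cla0} with $\beta$ replaced by $\beta+2\gamma$, and bound $M(\alpha,\beta+2\gamma)$ by $2^{\max\{|\alpha|,|\beta|\}/2+2n}n!\,\mab$ via two applications of \eqref{pr6}; for $c$ small the series in $n$ converges, and the surviving factor $2^{\max\{|\alpha|,|\beta|\}/2}\leq(\sqrt2)^{|\alpha|+|\beta|}$ is absorbed into a new constant. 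In particular every $x^\beta\partial^\alpha f$ lies in $Q^s(\rd)$, so $S_\infty^{s,\eps}[f]$ is at least well defined.

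Then I would estimate $\|x^\beta\partial^\alpha f\|_{Q^s}$: by \eqref{qs} and the Leibniz rule it is a finite sum, over $|\mu|+|\nu|\leq s$ and $\gamma\leq\min\{\mu,\beta\}$, of terms $\binom{\mu}{\gamma}\tfrac{\beta!}{(\beta-\gamma)!}\|x^{\nu+\beta-\gamma}\partial^{\alpha+\mu-\gamma}f\|_{L^2}$. Inserting the Gaussian bound above and $\|e^{-c|x|^2}\|_{L^2}<\infty$, each $L^2$ norm is $\leq C^{|\alpha|+|\beta|+2s+1}M(\alpha+\mu-\gamma,\nu+\beta-\gamma)\leq C_s\,2^{|\alpha|+|\beta|}\mab$ by the bounded-shift remark, while $\binom{\mu}{\gamma}\leq 2^s$ and, by \eqref{pr5}, $\tfrac{\beta!}{(\beta-\gamma)!}\leq\tfrac{|\beta|!}{|\beta-\gamma|!}\leq(1+|\beta|)^s$. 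Summing the ($s$-dependent) number of terms gives $\|x^\beta\partial^\alpha f\|_{Q^s}\leq C_s'(1+|\beta|)^s\widetilde C^{|\alpha|+|\beta|}\mab$, hence $S_\infty^{s,\eps}[f]\leq C_s'\sum_{\alpha,\beta}(1+|\beta|)^s(\eps\widetilde C)^{|\alpha|+|\beta|}<\infty$ provided $\eps<1/\widetilde C$, since $\sum_\alpha r^{|\alpha|}$ and $\sum_\beta(1+|\beta|)^s r^{|\beta|}$ converge for $r<1$.

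For the converse, assume $S_\infty^{s,\eps}[f]=:A<\infty$; we may take $\eps\leq1$. Each term being finite, $\|x^\beta\partial^\alpha f\|_{Q^s}\leq A\,\eps^{-|\alpha|-|\beta|}\mab$ for all $\alpha,\beta$; since $s\geq0$ the $Q^s$ norm dominates the $L^2$ norm, so $x^\beta\partial^\alpha f\in L^2(\rd)$ for all $\alpha,\beta$, whence $f\in\bigcap_{s'}Q^{s'}(\rd)=\cS(\rd)$ and in particular $f\in C^\infty(\rd)$. To recover \eqref{cla0} I would use the Sobolev embedding $\|g\|_{L^\infty}\leq C\|g\|_{H^m}$ with $m=\lfloor d/2\rfloor+1$, applied to $g=x^\beta\partial^\alpha f\in\cS(\rd)$, together with $\|x^\beta\partial^\alpha f\|_{H^m}\asymp\sum_{|\mu|\leq m}\|\partial^\mu(x^\beta\partial^\alpha f)\|_{L^2}$. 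Expanding each $\partial^\mu(x^\beta\partial^\alpha f)$ by Leibniz as above and bounding
\[
\|x^{\beta-\gamma}\partial^{\alpha+\mu-\gamma}f\|_{L^2}\leq\|x^{\beta-\gamma}\partial^{\alpha+\mu-\gamma}f\|_{Q^s}\leq A\,\eps^{-|\alpha|-|\beta|-|\mu|+2|\gamma|}M(\alpha+\mu-\gamma,\beta-\gamma),
\]
and using $\eps^{-|\mu|+2|\gamma|}\leq\eps^{-m}$, $\binom{\mu}{\gamma}\leq2^m$, $\tfrac{\beta!}{(\beta-\gamma)!}\leq(1+|\beta|)^m$ and the bounded-shift bound on $M$, one gets $|x^\beta\partial^\alpha f(x)|\leq C''(1+|\beta|)^m(2/\eps)^{|\alpha|+|\beta|}\mab$. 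Finally $(1+|\beta|)^m\leq m!\,e^{|\beta|}$ turns the polynomial factor into a geometric one, so \eqref{cla0} holds with $C=\max\{2e/\eps,\,C''m!\}$, i.e. $f\in\sg$.

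The main obstacle is the Gaussian-decay upgrade in the first implication, and in particular the requirement that it preserve the sharp weight $\mab$: if one used instead the cruder \eqref{cla6}, the $L^2$ norms of the $x^{\nu+\beta-\gamma}\partial^{\alpha+\mu-\gamma}f$ would pick up both a $\sqrt{|\beta|!}$ (from $\|\px^N e^{-c|x|^2}\|_{L^2}$) and, more seriously, a factor $|\alpha|!^{1/2}$ that could not be absorbed into $\eps^{|\alpha|}$, so the series defining $S_\infty^{s,\eps}[f]$ would diverge. The remaining points are technical: the $\max$ in \eqref{cla0bis} must be carried carefully through the index shifts, and the ratio $\beta!/(\beta-\gamma)!$ produces a polynomial factor $(1+|\beta|)^{|\gamma|}$, harmless only because one always keeps a genuine geometric margin in $\eps$ to spend.
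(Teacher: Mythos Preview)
Your proof is correct in both directions. The converse direction is essentially the paper's argument, just written out more explicitly.

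For the forward direction you take a genuinely different route from the paper. The paper never passes through a Gaussian-weighted $L^\infty$ estimate at all: it simply observes that for an integer $M>d/4$ one has
\[
\|x^\delta\partial^\gamma(x^\beta\partial^\alpha f)\|_{L^2}\leq C'\|(1+|x|^2)^M x^\delta\partial^\gamma(x^\beta\partial^\alpha f)\|_{L^\infty},
\]
and then expands the right-hand side by Leibniz and applies \eqref{cla0} directly. The polynomial weight $(1+|x|^2)^M$ is absorbed as a bounded shift of $\beta$, so one immediately obtains $\|x^\beta\partial^\alpha f\|_{Q^s}\leq C_s^{|\alpha|+|\beta|+1}\mab$ and the series converges for $\eps<C_s^{-1}$. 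This is shorter and avoids your ``key preliminary step'' entirely. What your approach buys is the sharpened estimate $|x^\beta\partial^\alpha f(x)|\leq C^{|\alpha|+|\beta|+1}\mab\,e^{-c|x|^2}$, which is of independent interest (it is a refinement of \eqref{cla5}, retaining the optimal weight $\mab$ for all $\alpha,\beta$ rather than just for $|\beta|\leq|\alpha|$), but it is not needed for the proposition.

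One small remark: your closing comment that \eqref{cla6} would be inadequate is not quite accurate. If one uses the full strength of \eqref{cla6}, namely the factor $\langle x\rangle^{-|\alpha|}$, then $|x^\beta\partial^\alpha f|\leq C^{|\alpha|+1}\alpha!\,\langle x\rangle^{\max(|\beta|-|\alpha|,0)}e^{-c|x|^2}$, and since $\|\langle x\rangle^N e^{-c|x|^2}\|_{L^2}\leq C^N(N!)^{1/2}$ and $|\alpha|!\big((|\beta|-|\alpha|)!\big)^{1/2}\leq |\alpha|!^{1/2}|\beta|!^{1/2}$ when $|\beta|>|\alpha|$, one does recover the bound $C^{|\alpha|+|\beta|+1}\mab$. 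Your concern applies only if one discards the $\langle x\rangle^{-|\alpha|}$ factor. This does not affect the validity of your argument.
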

\begin{proof}
Assume $f\in\sg$. We have
\[
\|x^\be\partial^\al f\|_{Q^s}=\sum_{|\delta|+|\gamma|\leq
s}\|x^\delta\partial^\gamma\big(x^\be\partial^\al
f\big)\|_{L^2}.
\]
Now, if $M\in\mathbb{N}$ satisfies
$M>d/4$ we have
\begin{equation}\label{cla2}
\|x^\delta\partial^\gamma\big(x^\be\partial^\al
f\big)\|_{L^2}\leq C'\|(1+|x|^2)^M
x^\delta\partial^\gamma\big(x^\be\partial^\al
f\big)\|_{L^\infty}.
\end{equation}
By Leibniz' formula, \eqref{cla0} and
\eqref{pr6} we get
\[
\|x^\be\partial^\al f\|_{Q^s}\leq C_s^{|\alpha|+|\beta|+1}\mab
\]
for some constant $C_s>0$. Hence $S^{s,\eps}_\infty[f]<\infty$ if $\eps<C_s^{-1}$.\par
In the opposite direction, we may take $s=0$; hence assume $S^{0,\eps}_\infty[f]<\infty$ for some $\eps>0$. Then $\|x^\beta\partial^\alpha f(x)\|_{L^2}\leq
C^{|\alpha|+|\beta|+1}\mab$ for\ all
$\alpha,\,\beta\in\N^d$. If $M$ is an integer, $M>d/2$, we have
\[
\|x^\beta\partial^\alpha f\|_{L^\infty}\leq C\sum_{|\gamma|\leq M} \|\partial^\gamma\big(x^\beta\partial^\alpha f\big)\|_{L^2},
\]
and similarly one gets that $f\in\sg$.
\end{proof}


\section{Proof of the main result (Theorem
\ref{mainthm})}\label{secdim}
In this section we prove
Theorem \ref{mainthm}. In
fact we shall state and prove
 this result for the more general non-homogeneous equation
\begin{equation}
\label{A5.1f} Pu=f+F[u],
\end{equation}
where $P$  and $F[u]$ satisfy
the assumptions of Theorem
\ref{mainthm} and $f$ is a
function in the space $\sg$ defined in Section 3.
Moreover we can restate our
result in terms of estimates in $\sg$. Namely,
 in view of Theorem
 \ref{estensione},
it will be sufficient to
prove the following theorem.

\begin{theorem}\label{AA5.1}
Let $P=p(x,D)\in {\rm OP}\Gamma_{a}^{m}(\rd), m>0,$ be $\Gamma$-elliptic, that is \eqref{Gammaell} is
satisfied. Let $F[u]$ be of the form \eqref{A5.2} (possibly
with some factors in the product replaced by their
conjugates) and $f \in \sg.$ Assume moreover that $u\in
H^s(\rd)$, $s>d/2+\max_{k}\{|\rho_k|\}$, is a solution of
\eqref{A5.1f}. Then $u\in\sg$.
\end{theorem}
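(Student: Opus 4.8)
\emph{Overview.} The plan is to first upgrade $u$ to a Schwartz function, and then to prove the estimates of Definition \ref{classisg} by induction on the order of differentiation; by Proposition \ref{stima0} this amounts to producing $s\in\N$ and $\eps>0$ with $S_\infty^{s,\eps}[u]<\infty$. The basic tool is the $\Gamma$--elliptic a priori estimate coming from the parametrix of Proposition \ref{para}, namely $\norm{v}_{Q^{t}}\le C(\norm{Pv}_{Q^{t-m}}+\norm{v}_{Q^{t-1}})$, which I would iterate a \emph{fixed} number of times so that the globally regularizing remainder only contributes through a low‑order term with fixed constants.

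\emph{Step 1: $u\in\cS(\rd)$.} Starting from $u\in H^s(\rd)$ one bootstraps in the scale $Q^{t}(\rd)$: the hypothesis $s>d/2+\max_k|\rho_k|$ puts each $\partial^{\rho_k}u$ in a space where Proposition \ref{schauderQ} (and its $H$--space analogue) applies, and the constraint $h+\max_k|\rho_k|\le\max\{m-1,0\}$ guarantees that $F[u]$, hence $Pu$, has enough regularity and decay for the a priori estimate to produce a strict gain at each step; since $\bigcap_t Q^{t}(\rd)=\cS(\rd)$, we get $u\in\cS(\rd)$, so $f+F[u]\in\cS(\rd)$ and all the $Q^{t}$--norms below are finite.

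\emph{Step 2: the analytic estimates.} Fix an integer $s>d/2+\max_k|\rho_k|$; I would prove that there is $B\ge1$, depending on $P,F,f,s,d$ but not on $\alpha,\beta$, such that $\norm{x^\beta\partial^\alpha u}_{Q^s}\le B^{|\alpha|+|\beta|+1}\mab$ for all $\alpha,\beta$, arguing by induction on $N=|\alpha|+|\beta|$ (the finitely many cases with $N$ below a threshold being settled by Step 1 once $B$ is large). For the inductive step, iterating the a priori estimate $s+1$ times gives $\norm{x^\beta\partial^\alpha u}_{Q^s}\le C_1\norm{P(x^\beta\partial^\alpha u)}_{Q^{s-m}}+C_2\norm{x^\beta\partial^\alpha u}_{Q^{-1}}$ with $C_1,C_2$ fixed; the last term is harmless because $x^\beta\partial^\alpha$ maps $Q^s(\rd)$ into $Q^{s-N}(\rd)\hookrightarrow Q^{-1}(\rd)$ with operator norm at most polynomial in $N$, so it is $\le\mathrm{poly}(N)\norm{u}_{Q^s}\le\tfrac14 B^{N+1}\mab$ for $N$ large. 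For the first term, apply $x^\beta\partial^\alpha$ to $Pu=f+F[u]$ and move it across $P$ by \eqref{pr1}--\eqref{pr2}, obtaining
\[
P(x^\beta\partial^\alpha u)=x^\beta\partial^\alpha f+x^\beta\partial^\alpha F[u]-\sum_{(\gamma,\delta)\ne(0,0)}(-1)^{|\gamma|}\binom{\alpha}{\delta}\binom{\beta}{\gamma}(D^\gamma_\xi\partial^\delta_x p)(x,D)\bigl(x^{\beta-\gamma}\partial^{\alpha-\delta}u\bigr).
\]
Here $\norm{x^\beta\partial^\alpha f}_{Q^{s-m}}\le\norm{x^\beta\partial^\alpha f}_{Q^s}\le C^{N+1}\mab$ since $f\in\sg$; in the commutator terms, the analyticity of $p$ (the $\alpha!\beta!$--bounds \eqref{Gsymbols}) gives $D^\gamma_\xi\partial^\delta_x p\in\Gamma^{m-|\gamma|}(\rd)$ with seminorms $\lesssim C_0^{|\gamma|+|\delta|}\gamma!\delta!$, so that $\norm{(D^\gamma_\xi\partial^\delta_x p)(x,D)w}_{Q^{s-m}}\lesssim C_0^{|\gamma|+|\delta|}\gamma!\delta!\,\norm{w}_{Q^{s-|\gamma|}}$, and with $w=x^{\beta-\gamma}\partial^{\alpha-\delta}u$ of level $N-|\gamma|-|\delta|<N$ the inductive hypothesis applies, the binomial--factorial weights being absorbed using the precise form \eqref{cla0bis} of $\mab$; for the nonlinear term one expands $x^\beta\partial^\alpha F[u]$ by Leibniz, uses \eqref{estnonlincoeff}, the $m$--gain, and Proposition \ref{schauderQ}, the subcriticality $h+\max_k|\rho_k|\le\max\{m-1,0\}$ reducing it to a finite sum of products $\prod_{k=1}^{l}\norm{x^{\beta_k}\partial^{\alpha_k}u}_{Q^s}$ with $l\ge2$ and indices summing to $\alpha,\beta$ up to bounded corrections. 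Summing against $\eps^{|\alpha|+|\beta|}/\mab$, the truncated sums $S_N^{s,\eps}[u]$ (finite, increasing in $N$) then satisfy, for $\eps$ small and $B$ large, a closed inequality $S_N\le A+\theta S_N+\Phi(S_N)$ with $A<\infty$, $\theta<1$ and $\Phi$ a power series vanishing to second order with small coefficients (this is where $f\in\sg$, via Proposition \ref{stima0}, and the convergent majorant for the nonlinearity come in); a standard bootstrap gives $\sup_N S_N^{s,\eps}[u]<\infty$, i.e. $u\in\sg$. Taking $f=0$ and invoking Theorem \ref{estensione} yields Theorem \ref{mainthm}.

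\emph{Main obstacle.} The crux is the combinatorial bookkeeping keeping the induction closed: after commuting $x^\beta\partial^\alpha$ past $P$, every new term must either drop to a strictly lower level or carry a gain (a power of $\eps$, respectively of $B^{-1}$) outweighing the binomial coefficients $\binom{\alpha}{\delta}\binom{\beta}{\gamma}$ and the factorials $\gamma!\delta!$. This is exactly where the non‑symmetric weight $\mab=|\alpha|!^{1/2}\max\{|\alpha|,|\beta|\}!^{1/2}$ — tailored to a sector rather than to a strip — is used essentially, and where the nonlinear contribution must be converted, using the product structure $l\ge2$ and the subcriticality condition, into a majorant series that converges only for $\eps$ sufficiently small.
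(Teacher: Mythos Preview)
Your overall architecture coincides with the paper's (Schwartz bootstrap, then inductive control of the partial sums $S_N^{s,\eps}[u]$ via the parametrix and a commutator expansion), and you correctly flag the commutator bookkeeping as the crux. The gap is that the resolution you sketch for it does not close.

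Here is the concrete obstruction. Take $|\beta|>|\alpha|$, so $M(\alpha,\beta)=|\alpha|!^{1/2}|\beta|!^{1/2}$, and look at a commutator term with $\delta=0$, $\gamma\ne0$: after composing with $E$ you have an operator of order $-|\gamma|\le0$ whose $Q^s$--operator norm is $\lesssim C_0^{|\gamma|}\gamma!$ by \eqref{Gsymbols}. Using $\binom{\beta}{\gamma}\gamma!=\beta!/(\beta-\gamma)!$ and the inductive hypothesis at level $N-|\gamma|$ (in the $Q^s$ norm, since that is the only scale your induction tracks), and dividing by $M(\alpha,\beta)$, the surviving combinatorial weight is
\[
\frac{\beta!}{(\beta-\gamma)!}\cdot\frac{M(\alpha,\beta-\gamma)}{M(\alpha,\beta)}
\;\le\;\frac{|\beta|!}{|\beta-\gamma|!}\cdot\frac{|\beta-\gamma|!^{1/2}}{|\beta|!^{1/2}}
=\Big(\frac{|\beta|!}{|\beta-\gamma|!}\Big)^{1/2}\le|\beta|^{|\gamma|/2},
\]
which is unbounded in $|\beta|$ for each fixed $|\gamma|\ge1$. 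No choice of $\eps$ (or of your constant $B$) absorbs this. The sector weight $M(\alpha,\beta)$ is precisely too weak on the $\beta$-side to swallow the full $\gamma!$ coming from the analytic symbol bounds, so ``the binomial--factorial weights being absorbed using the precise form \eqref{cla0bis} of $M(\alpha,\beta)$'' is where the argument breaks.

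The paper supplies a genuinely new ingredient here that is absent from your outline: in the regime $|\beta|>|\alpha|$ one introduces a cutoff $\varphi_\beta(x)=\varphi(x/|\beta|^{1/2})$. On $\{|x|\ge|\beta|^{1/2}\}$ the symbol decay $(1+|x|+|\xi|)^{-|\gamma|}$ in \eqref{Gsymbols} yields an extra factor $|\beta|^{-|\gamma|/2}$ which exactly kills the $|\beta|^{|\gamma|/2}$ above; on $\{|x|\le2|\beta|^{1/2}\}$ one rewrites the commutator with a different splitting $x^\beta=x^{\tilde\delta}x^{\beta-\tilde\delta}$ (now with $|\tilde\delta|=|\beta|-|\alpha|+|\delta|$) and uses $|x^{\tilde\delta}|\lesssim|\beta|^{(|\beta|-|\alpha|+|\delta|)/2}$ together with Stirling's formula to close. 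In both cases, and also in the case $|\alpha|\ge|\beta|$, the paper further needs an iterated inverse-Leibniz rewriting (the auxiliary multi-indices $\tilde\gamma_0,\tilde\gamma_1,\dots$ in Proposition~\ref{commutatore}) to convert the $\xi$-gain of order $|\gamma_0|$ into an actual reduction of the $\partial^\alpha$-order; the one-step formula you wrote is not sufficient.

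A secondary issue: your ``harmless'' term $\|x^\beta\partial^\alpha u\|_{Q^{-1}}$ relies on the inclusion $Q^{s-N}\hookrightarrow Q^{-1}$, but for $N>s+1$ the inclusion goes the other way. The paper avoids this entirely by writing $EP=I+R$ with $R\in{\rm OP}\Gamma^{-1}$ and proving directly (Proposition~\ref{stima1}) that the $R$-contribution is $\le C_s\eps\,S^{s,\eps}_{N-1}[u]$, which feeds cleanly into the recursion.
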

In fact we always assume that
$F[u]$ has the form in
\eqref{A5.2}, and we leave to
the reader the easy changes
when some factors of the
product in \eqref{A5.2} are
replaced by their conjugates.
\par The first step is to show that, under the assumptions of Theorem \ref{AA5.1}, the sum $S_N^{s,\ve}[u]$ is finite for every $N
\in \N$ and for some $\ve >0.$ In particular, we prove the following preliminary result.

\begin{lemma} Under the assumptions of Theorem \ref{AA5.1}, we have $u \in \mathcal{S}(\rd).$
\end{lemma}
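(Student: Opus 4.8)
The plan is to establish Schwartz regularity by a standard bootstrap argument using the parametrix of the $\Gamma$-elliptic operator $P$ together with the Schauder-type multiplicative estimates on the weighted Sobolev spaces $Q^s(\rd)$. The starting point is the hypothesis $u \in H^s(\rd)$ with $s > d/2 + \max_k\{|\rho_k|\}$. First I would observe that, by the Sobolev embedding $H^s(\rd) \hookrightarrow C^{\max_k\{|\rho_k|\}}_b(\rd)$, all the derivatives $\partial^{\rho_k} u$ appearing in the nonlinearity \eqref{A5.2} are bounded continuous functions; more importantly, since $s - |\rho_k| > d/2$, the Schauder estimates for the ordinary Sobolev spaces show that each product $\prod_{k=1}^l \partial^{\rho_k} u$ belongs to $H^{s - \max_k\{|\rho_k|\}}(\rd)$, and multiplying by the coefficients $F_{h,l,\rho_1\ldots\rho_l}(x)$, whose derivatives grow at most polynomially by \eqref{estnonlincoeff}, keeps us in a space of the form $H^{s',s''}(\rd)$ with $s'' < 0$ allowed (the negative power $\px^{h}$ on the coefficient is harmless for membership in a weighted space with a sufficiently negative weight exponent). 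Likewise $f \in \sg$ is Schwartz. Hence the right-hand side $f + F[u]$ of \eqref{A5.1f} lies in some space $H^{\sigma_1,\sigma_2}(\rd)$.

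The second step is to exploit the parametrix. By Proposition \ref{para}, there is $E \in {\rm OP}\Gamma^{-m}(\rd)$ with $EP = I + R$, $R$ globally regularizing. Applying $E$ to \eqref{A5.1f} gives $u = E(f + F[u]) - Ru$. Since $R$ maps $\cS'(\rd) \to \cS(\rd)$, the term $Ru$ is already Schwartz. For the term $E(f + F[u])$, I would use the mapping properties of operators in ${\rm OP}\Gamma^{-m}(\rd)$ on the spaces $H^{s_1,s_2}(\rd)$, which follow from Proposition \ref{25pro} (note that the symbol estimates \eqref{Gsymbols}, hence \eqref{defing}, imply the separated estimates \eqref{25-1} with $n = m$ and $m$ replaced by $-m$ appropriately, because $(1+|x|+|\xi|)^{-m+|\alpha|} \le \px^{-m/2 + \ldots}\pxi^{\ldots}$ — one has to be a little careful here but for $m>0$ the weight $(1+|x|+|\xi|)^{-m}$ is bounded and the derivative gains go to $\pxi$). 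The upshot is that $E$ improves differentiability by $m > 0$ and, crucially, produces the weight $\px^{m}$ decay in the $x$-variable since the symbol of $P$ grows like $(1+|x|+|\xi|)^m$ near infinity and $\Gamma$-ellipticity makes $|p| \gtrsim (1+|x|+|\xi|)^m$ there — so $E$ gains both smoothness and polynomial decay.

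The third step is the iteration. Starting from $u \in H^s(\rd) \subset H^{s,0}(\rd)$, the above shows $u = E(f+F[u]) - Ru \in H^{s+m_0, m_0}(\rd)$ for some $m_0 > 0$ (taking into account the polynomial loss from the coefficients and the condition $h + \max_k\{|\rho_k|\} \le \max\{m-1,0\}$, which is precisely what guarantees the nonlinear term does not destroy the net gain: the derivative count $\max_k\{|\rho_k|\}$ plus the coefficient growth $h$ is dominated by $m-1$, leaving at least one full degree of improvement). Feeding this improved regularity back into the right-hand side — the multiplicative estimates \eqref{25-3} are stable under taking products, and improve with each step — and iterating, I obtain $u \in H^{s + km_0, km_0}(\rd)$ for every $k \in \N$. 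Since $\bigcap_{s_1,s_2} H^{s_1,s_2}(\rd) = \cS(\rd)$ (equivalently $\bigcap_s Q^s(\rd) = \cS(\rd)$ as recalled in Section \ref{prelimi}), this gives $u \in \cS(\rd)$.

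The main obstacle I anticipate is bookkeeping the weight exponents carefully through the nonlinear term: one must verify that the combination of the polynomial growth $\px^{h}$ in the coefficient estimate \eqref{estnonlincoeff}, the loss of $\max_k\{|\rho_k|\}$ derivatives, and the behavior of the Schauder estimate \eqref{25-3} under products, still yields a strictly positive net gain at each iteration step, so that the induction actually closes rather than stalling. This is exactly where the structural hypothesis $h + \max_k\{|\rho_k|\} \le \max\{m-1,0\}$ (together with $m > 0$, $l \ge 2$) is used, and it should be checked with some care but involves no deep idea; everything else is a routine application of the pseudodifferential calculus and Sobolev embeddings recalled in Section \ref{prelimi}.
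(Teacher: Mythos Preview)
Your approach is essentially the same as the paper's: a bootstrap in the weighted Sobolev scale $H^{s_1,s_2}(\rd)$ via the parametrix $E$, with the structural hypothesis $h+\max_k|\rho_k|\le\max\{m-1,0\}$ guaranteeing a strictly positive net gain at each step. The paper makes the bookkeeping explicit by splitting the weight as $(1+|x|+|\xi|)^{-m}\le\pxi^{-a}\px^{-b}$ (with $a+b=m$), showing $E:H^{s_1-M,s_2-h}\to H^{s_1+1/2,s_2+1/2}$ when $m\ge1$ and $E:H^{s_1,s_2}\to H^{s_1+m/2,s_2+m/2}$ when $0<m<1$; your statement that ``$E$ improves differentiability by $m$ and produces $\px^m$ decay'' slightly overstates the gain (it is split between the two indices), but once you carry out the splitting you indicated the argument goes through exactly as you outline.
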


\begin{proof}
The proof is based on a bootstrap argument in the scale of
weighted Sobolev spaces defined in \eqref{25-0}. Notice
that $H^s(\rd)=H^{s,0}(\R^d)$ and that
$\bigcap\limits_{s_1,s_2 \in
\R}H^{s_1,s_2}(\R^d)$ $=\mathcal{S}(\rd).$ To prove the lemma
it is then sufficient to show that if $u \in
H^{s_1,s_2}(\rd)$, $s_1>d/2+M,$ with $M=\max_k
\{|\rho_k|\},$ $s_2\geq0$, then $u \in H^{s_1+\tau,
s_2+\tau}$, with $\tau=\min\{m/2,1/2\}$, and to iterate this argument. We first consider
the case $m\geq 1.$ Let $E \in \textrm{OP}\Gamma^{-m}(\rd)$
be a parametrix of $P$ (Proposition \ref{para}). Applying
$E$ to both sides of the equation \eqref{A5.1f}, we get
$$u=-Ru+Ef+EF[u],$$
where $R$ is globally regularizing. In particular, we have
$Ru\in \mathcal{S}(\rd)$ and $Ef \in \mathcal{S}(\rd)$
because $f \in \mathcal{S}(\rd).$ Concerning the nonlinear
term, we observe that, since $-m\leq -M-h-1,$ then the
symbol $e(x,\xi)$ of $E$ satisfies the following estimates
\begin{eqnarray*}
|\partial_{\xi}^{\alpha}\partial_x^{\beta}e(x,\xi)|&\leq& C_{\alpha \beta}
(1+|x|+|\xi|)^{-M-h-1-|\alpha|}\px^{-|\beta|} \\
&\leq& C_{\alpha
\beta}\pxi^{-M-1/2-|\alpha|}\px^{-h-1/2-|\beta|}.
\end{eqnarray*} It follows from Proposition \ref{25pro} that
$E:H^{s_1-M,s_2-h}(\rd)\rightarrow
H^{s_1+1/2,s_2+1/2}(\rd)$
 continuously for every $s_1,s_2 \in \R$. In particular, in view of \eqref{estnonlincoeff}, we have
\begin{eqnarray*} \| EF[u]\|_{H^{s_1+1/2,s_2+1/2}} &=&
\| E\sum_{h,l,\rho_{1},\ldots,
\rho_{l}}F_{h,l,\rho_{1}\ldots \rho_{l}}(x)
 \prod_{k=1}^l\partial^{\rho_{k}}u\|_{H^{s_1+1/2,s_2+1/2}}
 \\ &\leq&C \sum_{h,l,\rho_{1},\ldots, \rho_{l}}\|F_{h,l,\rho_{1}\ldots \rho_{l}}(x)
 \prod_{k=1}^l\partial^{\rho_{k}}u\|_{H^{s_1-M,s_2-h}} \\
 &\leq& C' \|\prod_{k=1}^{l} \partial^{\rho_k}u\|_{H^{s_1-M,s_2}} \leq C'' \|u\|_{H^{s_1,s_2}}^{l}<\infty.\end{eqnarray*}
by Schauder's estimates \eqref{25-3}, because $s_1-M >d/2.$
The case $0<m<1$ is completely similar, considering that in
this case $h=M=0$ and the symbol $e(x,\xi)$ of $E$
satisfies the estimates
$$|\partial_{\xi}^{\alpha}\partial_x^{\beta}e(x,\xi)|\leq
C_{\alpha \beta} \pxi^{-m/2-|\alpha|}\px^{-m/2-|\beta|},$$
so that $E$ maps continuously $H^{s_1,s_2}(\rd)$ into
$H^{s_1+m/2,s_2+m/2}(\rd).$
\end{proof}
\par
In order to prove Theorem \ref{AA5.1} it suffices to verify
that $S^{s,\eps}_\infty[u]<\infty$ for some $s \geq 0,
\ve>0$, in view of Proposition \ref{stima0}. This will be
achieved by an iteration argument involving the partial
sums of the series in \eqref{accaenne2}, that are
\begin{equation}\label{accaenne}
S_N^{s,\eps}[f]=\sum_{|\al|+|\beta|\leq N}\frac{\eab}{\mab}
\|x^\be\partial^\al f\|_{Q^s},
\end{equation}
where $\mab$ is defined in \eqref{cla0bis}.

\subsection{Proof of Theorem \ref{AA5.1}}
We need
several estimates to which we
address now.

\begin{proposition}\label{stima1}
Let $R\in{\rm OP}\Gamma^{-1}(\rd)$. Then for every $s\in\R$
there exists a constant $C_s>0$ such that, for every
$\eps>0, N \geq 1$ and $u \in \cS(\rd)$, we have
\[
\sum_{0<|\alpha|+|\beta|\leq N}\frac{\eab}{\mab}\|R(x^\beta\partial^\alpha
u)\|_{Q^s}\leq C_s\eps
S^{s,\eps}_{N-1}[u].
\]
\begin{proof}
We first estimate the terms with $\beta=0$, hence
$\alpha\not=0$. Let $k\in\{1,\ldots,d\}$ such that
$\alpha_{k}\not=0$. Since $R\circ \partial_{k}\in{\rm
OP}\Gamma^{0}(\rd)$ is bounded on $Q^s(\rd)$ we
have\footnote{We denote by $e_k$ the $k$th vector of the
standard basis of $\rd$.}
\[
\frac{\eps^{|\alpha|}}{|\alpha|!} \|R(\partial^\alpha
u)\|_{Q^s}\leq
C_s\eps\frac{\eps^{|\alpha|-1}}{|\alpha|!}\|\partial^{\alpha-e_{k}}u\|_{Q^s}.
\]

On the other hand, when $\beta\not=0$, hence
$\beta_j\not=0$ for some $j\in\{1,\ldots,d\}$, we use the
fact that $R\circ x_{j}\in{\rm OP}\Gamma^{0}(\rd)$ is
bounded on $Q^s(\rd)$. We get
\[
\frac{\eps^{|\alpha|+|\beta|}}{M(\alpha,\beta)} \|R(x^\beta
\partial^\alpha u)\|_{Q^s}\leq
C_s\eps\frac{\eps^{|\alpha|+|\beta|-1}}{M(\alpha,\beta)}\|x^{\beta-e_{j}}\partial^\alpha
u\|_{Q^s}.
\]
Since $M(\alpha,\beta)\geq M(\alpha,\beta-e_j)$, this gives
the desired result.
\end{proof}
\end{proposition}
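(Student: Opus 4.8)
The plan is to dispatch each term of the left-hand sum by ``peeling off'' from $x^\beta\partial^\alpha u$ either one derivative $\partial_k$ or one multiplication $x_j$ and absorbing it into $R$. The point is that this lowers the order of $R$ from $-1$ to $0$: by the composition rule recalled above, the symbol of $R\circ\partial_k$ lies in $\Gamma^{-1}(\rd)\cdot\Gamma^1(\rd)\subset\Gamma^0(\rd)$ (since $i\xi_k\in\Gamma^1(\rd)$), and likewise the symbol of $R\circ x_j$ lies in $\Gamma^0(\rd)$ (since the symbol $x_j$ is in $\Gamma^1(\rd)$), so in both cases the resulting operator is bounded $Q^s\to Q^s$. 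This order gain of one unit is exactly what produces the factor $\eps$: a term indexed by $(\alpha,\beta)$ with $0<|\alpha|+|\beta|\le N$ will be bounded by $C_s\eps$ times a term indexed by some $(\alpha',\beta')$ with $|\alpha'|+|\beta'|=|\alpha|+|\beta|-1\le N-1$, i.e. a genuine term of $S^{s,\eps}_{N-1}[u]$.

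Concretely, I would first treat the terms with $\beta=0$, hence $\alpha\neq0$. Picking $k$ with $\alpha_k\neq0$, write $\partial^\alpha u=\partial_k\partial^{\alpha-e_k}u$ ($e_k$ the $k$th standard basis vector), so that $R(\partial^\alpha u)=(R\circ\partial_k)(\partial^{\alpha-e_k}u)$ and $\|R(\partial^\alpha u)\|_{Q^s}\le C_s\|\partial^{\alpha-e_k}u\|_{Q^s}$, with $C_s$ the maximum of the operator norms over $k=1,\dots,d$. Since $M(\alpha,0)=|\alpha|!\ge(|\alpha|-1)!=M(\alpha-e_k,0)$, multiplying by $\eab/\mab=\eps^{|\alpha|}/|\alpha|!$ and writing $\eps^{|\alpha|}=\eps\cdot\eps^{|\alpha|-1}$ bounds this term by $C_s\eps$ times $\eps^{|\alpha|-1}M(\alpha-e_k,0)^{-1}\|\partial^{\alpha-e_k}u\|_{Q^s}$, a term of $S^{s,\eps}_{N-1}[u]$. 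For the terms with $\beta\neq0$, pick $j$ with $\beta_j\neq0$, write $x^\beta\partial^\alpha u=x_j\,(x^{\beta-e_j}\partial^\alpha u)$, use boundedness of $R\circ x_j$ on $Q^s$, and note $M(\alpha,\beta)\ge M(\alpha,\beta-e_j)$ because $|\beta-e_j|<|\beta|$ forces $\max\{|\alpha|,|\beta-e_j|\}\le\max\{|\alpha|,|\beta|\}$; exactly as before this bounds the term by $C_s\eps$ times $\eps^{|\alpha|+|\beta|-1}M(\alpha,\beta-e_j)^{-1}\|x^{\beta-e_j}\partial^\alpha u\|_{Q^s}$, again a term of $S^{s,\eps}_{N-1}[u]$.

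Summing, each $(\alpha,\beta)$ on the left has been sent to a single index $(\alpha',\beta')$ on the right — either $(\alpha-e_k,0)$ or $(\alpha,\beta-e_j)$ — and a fixed $(\alpha',\beta')$ is reached from at most $2d$ such $(\alpha,\beta)$ (the $d$ choices $\alpha'+e_k$ in the first case and the $d$ choices $\beta'+e_j$ in the second). Since every term of $S^{s,\eps}_{N-1}[u]$ is nonnegative, the total is at most $2dC_s\eps\,S^{s,\eps}_{N-1}[u]$, and absorbing $2d$ into $C_s$ gives the stated inequality.

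I do not expect a real obstacle: the entire content sits in the composition rule $\Gamma^{-1}(\rd)\cdot\Gamma^1(\rd)\subset\Gamma^0(\rd)$ and the $Q^s$-boundedness of order-zero operators, both available from Section \ref{prelimi}. The only points needing (minor) care are the monotonicity of the weight $M(\alpha,\beta)$ under deleting one index from $\alpha$ or $\beta$, and the bookkeeping of multiplicities in the final summation, neither of which is delicate.
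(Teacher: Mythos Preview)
Your proof is correct and follows essentially the same approach as the paper: split into the cases $\beta=0$ (peel off a $\partial_k$) and $\beta\neq0$ (peel off an $x_j$), use that $R\circ\partial_k$ and $R\circ x_j$ lie in ${\rm OP}\Gamma^0(\rd)$ and are hence bounded on $Q^s$, and invoke the monotonicity of $M(\alpha,\beta)$. Your version is in fact slightly more explicit than the paper's, which omits the final bookkeeping on multiplicities that you carry out.
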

\begin{proposition}\label{commutatore}
Let $P=p(x,D)$ be a pseudodifferential with symbol
$p(x,\xi)$ satisfying the estimates \eqref{Gsymbols}, with
$m\geq0$. Let $E\in{\rm OP}\Gamma^{-m}(\rd)$. Then for
every $s\in\R$ there exists a constant $C_s>0$ such that,
for every $\epsilon$ small enough, $N \geq 1$ and $u\in
\cS(\rd)$, we have
\begin{equation}\label{nn5}
\sum_{0<|\alpha|+|\beta|\leq N}\frac{\eab}{\mab}\|E[P,x^\beta
\partial^\alpha]u\|_{Q^s}\leq C_s\eps
S^{s,\eps}_{N-1}[u].
\end{equation}
\end{proposition}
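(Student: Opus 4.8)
The plan is to expand the commutator with the Leibniz-type identities \eqref{pr1}, \eqref{pr2} and to estimate the resulting terms by means of the $\Gamma^m$-calculus recalled in Section \ref{prelimi}. Applying \eqref{pr2} to $\partial^\alpha(Pu)$ and then \eqref{pr1} to bring $x^\beta$ inside the operators, and noticing that the term with $\gamma=\delta=0$ equals $P(x^\beta\partial^\alpha u)$, one gets
\[
[P,x^\beta\partial^\alpha]u=-\!\!\sum_{\substack{\gamma\le\beta,\ \delta\le\alpha\\ (\gamma,\delta)\ne(0,0)}}\!\!(-1)^{|\gamma|}\binom{\beta}{\gamma}\binom{\alpha}{\delta}\,(D^\gamma_\xi\partial^\delta_x p)(x,D)\big(x^{\beta-\gamma}\partial^{\alpha-\delta}u\big).
\]
By \eqref{Gsymbols}, the symbol $D^\gamma_\xi\partial^\delta_x p$ lies in $\Gamma^{m-|\gamma|}(\rd)$, with the extra gain of a factor $\px^{-|\delta|}$, and its seminorms of any fixed order are bounded by $C_1^{|\gamma|+|\delta|+1}\gamma!\,\delta!$.

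The core of the proof is a bound for $\|E(D^\gamma_\xi\partial^\delta_x p)(x,D)(x^{\beta-\gamma}\partial^{\alpha-\delta}u)\|_{Q^s}$ with a constant \emph{independent of} $\alpha,\beta$. Since $E\in{\rm OP}\Gamma^{-m}(\rd)$, the operator $E(D^\gamma_\xi\partial^\delta_x p)(x,D)$ belongs to ${\rm OP}\Gamma^{-|\gamma|}(\rd)$ and its symbol still carries the decay $\px^{-|\delta|}$. Using the $|\gamma|$-fold order gain and the $|\delta|$-fold $\px$-decay, one transfers (via the exact composition formulae for products with polynomials in $x$ or $\xi$, and the identities \eqref{pr1}--\eqref{pr3}) up to $|\gamma|$ of the derivatives of $\partial^{\alpha-\delta}$ and up to $|\delta|$ of the monomials of $x^{\beta-\gamma}$ across the operator, so that what remains is a zeroth-order pseudodifferential operator with $\Gamma^0$-seminorms still dominated by $C_2^{|\gamma|+|\delta|+1}\gamma!\,\delta!$, uniformly in $\alpha,\beta$. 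The $Q^s$-continuity of ${\rm OP}\Gamma^0(\rd)$ then yields
\[
\|E(D^\gamma_\xi\partial^\delta_x p)(x,D)(x^{\beta-\gamma}\partial^{\alpha-\delta}u)\|_{Q^s}\ \le\ C_2^{|\gamma|+|\delta|+1}\gamma!\,\delta!\sum_{(\alpha',\beta')}\|x^{\beta'}\partial^{\alpha'}u\|_{Q^s},
\]
the sum extending over a number (polynomial in $|\gamma|+|\delta|$) of pairs with $|\alpha-\delta|-|\gamma|\le|\alpha'|\le|\alpha-\delta|$, $|\beta-\gamma|-|\delta|\le|\beta'|\le|\beta-\gamma|$ (the commutators arising in the transfer contributing only such lower-degree terms).

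It then remains to sum over $(\alpha,\beta)$ and $(\gamma,\delta)$. Multiplying by $\eab/\mab$ and comparing each $\|x^{\beta'}\partial^{\alpha'}u\|_{Q^s}$ with the corresponding summand $\frac{\eps^{|\alpha'|+|\beta'|}}{M(\alpha',\beta')}\|x^{\beta'}\partial^{\alpha'}u\|_{Q^s}$ of $S^{s,\eps}_{N-1}[u]$ (admissible because $|\alpha'|+|\beta'|\le|\alpha|+|\beta|-1\le N-1$) leaves the combinatorial factor
\[
C_2^{|\gamma|+|\delta|}\,\eps^{\,|\alpha|+|\beta|-|\alpha'|-|\beta'|}\binom{\alpha}{\delta}\binom{\beta}{\gamma}\gamma!\,\delta!\,\frac{M(\alpha',\beta')}{M(\alpha,\beta)}.
\]
By $\binom{\alpha}{\delta}\delta!=\alpha!/(\alpha-\delta)!\le|\alpha|!/|\alpha-\delta|!$, its analogue for $\beta$ (inequality \eqref{pr5}), and the definition \eqref{cla0bis} of $M$ — using in particular that both $|\alpha|!$ and $\max\{|\alpha|,|\beta|\}!$ decrease under passage to smaller multi-indices — one checks this factor is $\le D_s^{|\gamma|+|\delta|}\eps^{|\gamma|+|\delta|}$. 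Rearranging the double series so that the outer index runs over $(\alpha',\beta')$ and summing the (convergent, for $\eps$ small) series in $(\gamma,\delta)$ then gives the asserted bound $C_s\eps\,S^{s,\eps}_{N-1}[u]$.

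The step I expect to be the main obstacle is exactly the last combinatorial estimate, and its uniformity in $N$: the coefficients $\binom{\alpha}{\delta}\binom{\beta}{\gamma}\gamma!\,\delta!$ grow like $|\alpha|^{|\delta|}|\beta|^{|\gamma|}$, and they can be offset only by combining the weight $\mab$ with the powers of $\eps$ freed when derivatives and $x$-monomials are absorbed into the operator. Making this bookkeeping close up forces a case distinction according to whether $|\alpha|\ge|\beta|$ or $|\alpha|<|\beta|$, and, in each, a careful choice of which factors of $x^{\beta-\gamma}$ and which derivatives of $\partial^{\alpha-\delta}$ to transfer — a choice dictated by the asymmetric form of $\mab$ in \eqref{cla0bis}.
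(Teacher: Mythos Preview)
Your overall strategy --- expand the commutator, absorb the symbol gain by pulling derivatives or monomials through, and sum --- is exactly the paper's, and your final paragraph correctly identifies where the difficulty lies. But the key combinatorial claim, that the factor
\[
C_2^{|\gamma|+|\delta|}\,\eps^{\,|\alpha|+|\beta|-|\alpha'|-|\beta'|}\binom{\alpha}{\delta}\binom{\beta}{\gamma}\gamma!\,\delta!\,\frac{M(\alpha',\beta')}{M(\alpha,\beta)}
\]
is bounded by $D_s^{|\gamma|+|\delta|}\eps^{|\gamma|+|\delta|}$, is \emph{false} in the regime $|\beta|>|\alpha|$ when $|\gamma|$ is large. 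Take $\alpha=\delta=0$ and $\gamma=\beta$ (so there are neither derivatives nor remaining monomials to absorb; $\alpha'=\beta'=0$). Then $\binom{\beta}{\gamma}\gamma!=\beta!$, $M(0,\beta)=|\beta|!^{1/2}$, $M(0,0)=1$, and your factor is $C_2^{|\beta|}\eps^{|\beta|}\beta!/|\beta|!^{1/2}$, which in dimension one equals $C_2^{n}\eps^{n}\,n!^{1/2}$ and blows up for every fixed $\eps>0$. The point is that the analytic symbol bound \eqref{Gsymbols} produces a full $\gamma!$, while the weight $M(\alpha,\beta)$ supplies only $|\beta|!^{1/2}$ when $|\alpha|$ is small; the ``missing'' half-factorial cannot be recovered just by choosing which monomials or derivatives to transfer.

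The paper's proof closes this gap by an additional device you do not have: in the case $|\beta|>|\alpha|$ it introduces a dyadic cutoff $\varphi_\beta(x)=\varphi(x/|\beta|^{1/2})$ and splits $[P,x^\beta\partial^\alpha]=\varphi_\beta[P,x^\beta\partial^\alpha]+(1-\varphi_\beta)[P,x^\beta\partial^\alpha]$. On the support of $1-\varphi_\beta$ one has $|x|\ge|\beta|^{1/2}$, so the unused part of the $\Gamma$-decay $(1+|x|+|\xi|)^{-k}$ yields an extra numerical factor $|\beta|^{-k/2}$; this supplies precisely the missing $|\beta|^{-|\gamma_0|/2}$ (see \eqref{c3bis2}, \eqref{c3bis21}), and via Stirling turns the divergent $|\beta|!^{1/2}$ into an exponential. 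On the support of $\varphi_\beta$ the complementary bound $|x^{\tilde\delta}|\le(2|\beta|^{1/2})^{|\tilde\delta|}$ is exploited with a different choice of $\tilde\delta$ (see \eqref{c3bis19p}, \eqref{c4bis19p}). Without this $|\beta|^{1/2}$-scale localisation the argument you outline does not close; your case $|\alpha|\ge|\beta|$ is essentially fine and matches the paper's first case, but the case $|\beta|>|\alpha|$ needs this extra idea.
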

\begin{proof}

We estimate separately each term arising in the sum
\eqref{nn5}. We write
\[
[P,x^\beta
\partial^\alpha]=[P,x^\beta]\partial^\alpha+x^\beta[P,\partial^\alpha].
\]
Hence, using \eqref{pr1}, \eqref{pr2}, we get
\begin{multline}\label{c0}
[P,x^\beta
\partial^\alpha]u=\sum_{0\not=\gamma_0\leq\beta}
(-1)^{|\gamma_0|+1}\binom{\beta}{\gamma_0}(D^{\gamma_0}_\xi
p)(x,D)(x^{\beta-\ga_0}\partial^\alpha
u)\\
- \sum_{0\not=\delta\leq\alpha} \binom{\alpha}{\delta}
x^\beta\partial^\delta_x p(x,D)
\partial^{\alpha-\delta} u.
\end{multline}
Given $\beta$, $\delta$, let $\tilde{\delta}$ be a
multi-index of maximal length among those satisfying
$|\tilde{\delta}|\leq |\delta|$ and
$\tilde{\delta}\leq\beta$ (hence, if
$|\tilde{\delta}|<|\delta|$ then $\beta-\tilde{\delta}=0$).
Writing $x^\beta =x^{\tilde{\delta}}x^{\beta-\tilde{\delta}}$ in the last
term of \eqref{c0} and using again \eqref{pr1} we get
\begin{equation}\label{c0bis}
[P,x^\beta
\partial^\alpha]u=\sum_{\delta\leq\alpha}\sum_{\stackrel{\gamma_0\leq\beta-\tilde{\delta}}{
(\delta,\ga_0)\not=(0,0)}}(-1)^{|\gamma_0|+1}\binom{\beta-\tilde{\delta}}
{\gamma_0} \binom{\alpha}{\delta} x^{\tid}(D^{\ga_0}_\xi
\partial^\delta_x p)(x,D)(
x^{\beta-\tid-\ga_0}\partial^{\alpha-\delta} u).
\end{equation}

We now work out this formula to obtain a useful representation of the commutator $[P,x^\beta
\partial^\alpha]$. Namely, we look at the operator $
x^{\beta-\tid-\ga_0}\partial^{\alpha-\delta}$. Given
$\ga_0$, $\alpha$, $\delta$, let $\tg_0$ be a multi-index, to be chosen later on,
satisfying $|\tg_0|\leq|\ga_0|$ and $\tg_0\leq\alpha-\delta$. We
write, by the inverse Leibniz formula \eqref{pr3},
\begin{multline}
x^{\beta-\tilde{\delta}-\ga_0}\partial^{\alpha-\delta}
=x^{\beta-\tilde{\delta}-\ga_0}\partial^{\tg_0}\partial^{\alpha-\delta-\tg_0}
=\partial^{\tg_0}\circ
x^{\beta-\tilde{\delta}-\ga_0}\partial^{\alpha-\delta-\tg_0}\\ +\sum_{\stackrel{0\not=\ga_1
\leq\beta-\tilde{\delta}-\ga_0}{
\ga_1\leq\tilde{\ga}_0}}\frac{(-1)^{|\ga_1|}(\beta-\tid-\ga_0)!}{(\beta-\tid-\ga_0-\ga_1)!}
\binom{\tg_0}{\ga_1}\partial^{\tg_0-\ga_1}\circ
x^{\beta-\tilde{\delta}-\ga_0-\ga_1}\partial^{\alpha-\delta-\tg_0}.
\end{multline}
We now look at the operator $
x^{\beta-\tid-\ga_0-\ga_1}\partial^{\alpha-\delta-\tg_0}$.
We denote by $\tg_1$ a multi-index, to be chosen later on,
satisfying $|\tg_1|\leq|\ga_1|$, $\tg_1\leq
\alpha-\delta-\tg_0$. Applying again the inverse Leibniz
formula we have
\begin{multline}
x^{\beta-\tilde{\delta}-\ga_0-\ga_1}\partial^{\alpha-\delta-\tg_0}
=x^{\beta-\tilde{\delta}-\ga_0-\ga_1}\partial^{\tg_1}
\partial^{\alpha-\delta-\tg_0-\tg_1}\\
=\partial^{\tg_1}\circ
x^{\beta-\tilde{\delta}-\ga_0-\ga_1}\partial^{\alpha-\delta-\tg_0-\tg_1}
\\
+\sum_{\stackrel{0\not=\ga_2
\leq\beta-\tilde{\delta}-\ga_0-\ga_1}{
\ga_2\leq\tg_1}}\frac{(-1)^{|\ga_2|}(\beta-\tid-\ga_0-\ga_1)!}
{(\beta-\tid-\ga_0-\ga_1-\ga_2)!}
\binom{\tg_1}{\ga_2}\partial^{\tg_1-\ga_2}\circ
x^{\beta-\tilde{\delta}-\ga_0-\ga_1-\ga_2}\partial^{\alpha-\delta-\tg_0-\tg_1}.
\end{multline}
Continuing in this way and
substituting all in
\eqref{c0bis} we get
\begin{eqnarray}\label{c0ter}
[P,x^\beta
\partial^\alpha]u&=&\sum_{\delta\leq\alpha}\sum_{j=0}^r
\sum_{\stackrel{\gamma_0\leq\beta-\tilde{\delta}}{
(\delta,\ga_0)\not=(0,0)}}
\sum_{\stackrel{0\not=\ga_1\leq\beta-\tid-\ga_0}{
\ga_1\leq\tg_0}} \hskip-7pt \cdots \hskip-7pt
\sum_{\stackrel{0\not=\ga_j\leq\beta-\tid-\ga_0-\ldots-\ga_{j-1}}{
\ga_j\leq \tg_{j-1}}} \hskip-12pt
C_{\alpha,\beta,\delta,\ga_0,\ga_1,\ldots,\ga_j}\\
&&\times \nonumber
p_{\alpha,\beta,\delta,\ga_0,\ga_1,\ldots,\ga_j}(x,D)\big(
x^{\beta-\tid-\ga_0-\ldots-\ga_j}\partial^{\alpha-\delta-\tg_0-\ldots-\tg_j}
u\big),
\end{eqnarray}
where $\tg_j$ satisfy $|\tg_j|\leq |\ga_j|$
and $\tg_j\leq
\alpha-\delta-\tg_0-\ldots-\tg_{j-1}$,
\begin{equation}
p_{\alpha,\beta,\delta,\ga_0,\ga_1,\ldots,\ga_j}(x,\xi)=x^{\tid}
\big(D^{\ga_0}_\xi\partial^\delta_x
p\big)(x,\xi)\xi^{\tg_0-\ga_1+\tg_1-\ldots-\ga_j+\tg_j},\quad
j\geq 0, \label{ordinecomm}
\end{equation} and
\begin{align}\label{c1bis}
|C_{\alpha,\beta,\delta,\ga_0,\ga_1,\ldots,\ga_j}|&=\frac{\alpha!(\beta-\tid)!}
{(\alpha-\delta)!\delta!\ga_0!(\beta-\tid-\ga_0-\ldots-\ga_j)!}\prod_{k=1}^j\binom{\tg_{k-1}}{\ga_k}
\nonumber\\
&\leq
\frac{|\alpha|!|\beta-\tid|!}
{|\alpha-\delta|!\delta!\ga_0!|\beta-\tid-\ga_0-\ldots-\ga_j|!}
2^{|\tg_0+\ldots+\tg_{j-1}|},
\end{align}
cf. \eqref{pr5} and \eqref{pr6}.
(If $j=0$ in \eqref{c1bis} we
mean that there are not the
binomial factors, nor the
power of $2$).
 Observe that,
since we have $\ga_j\not=0$
for every $j\geq1$, this
procedure in fact stops after
a finite number $r$ of steps.\par
We now study separately the terms with $|\alpha|\geq|\beta|$ and $|\beta|>|\alpha|$.

\par\medskip\noindent {\bf Case
$|\alpha|\geq|\beta|$.}\par We use the formula
\eqref{c0ter}, where now we choose $\gamma_0$ satisfying,
in addition, $|\tg_0|=|\gamma_0|$. Such a multi-index
exists, because $|\alpha|\geq|\beta|$. Similarly, at each subsequent step we can choose
$\tg_j$, $j\geq1$, satisfying in addition
$|\tg_j|=|\gamma_j|$. \par Now we observe that, by
\eqref{Gsymbols}, \eqref{pr6}, and Leibniz' formula, for
every $\theta,\sigma\in\mathbb{N}^d$ we have
\begin{equation}\label{c2}
|\partial^\theta_\xi\partial^\sigma_x
p_{\alpha,\beta,\delta,\ga_0,\ga_1,\ldots,\ga_j}(x,\xi)|\leq
C^{|\ga_0|+|\delta|+1}\ga_0!\delta!(1+|x|+|\xi|)^{m-|\theta|}\langle x\rangle^{-|\sigma|},
\end{equation}
for some constant $C$ depending only on $\theta$ and
$\sigma$. In fact, $|\tid|\leq |\delta|$,
$|\tg_0-\ga_1+\tg_1-\ldots-\ga_j+\tg_j|=|\tg_0|=|\ga_0|$,
and the powers of $|\delta|$ and $|\gamma_0|$ which arise
can be estimated by $C^{|\ga_0|+|\delta|+1}$ for some
$C>0$.\par
 We now use these last bounds to estimate
 $E\circ p_{\alpha,\beta,\delta,\ga_0,\ga_1,\ldots,\ga_j}(x,D)$. To this end,
 observe that this operator
 belongs to ${\rm OP}\Gamma^{0}(\rd)$, and
 therefore its norm as a
 bounded operator on
 $Q^s(\rd)$ is estimated by a
 seminorm of its symbol in
 $\Gamma^{0}(\rd)$, depending only on
 $s$ and $d$. Such a seminorm
 is in turn estimated by the
 product of a seminorm of the
 symbol of $E$ in
 $\Gamma^{-m}(\rd)$ and a
 seminorm of
 $p_{\alpha,\beta,\delta,\ga_0,\ga_1,\ldots,\ga_j}$
 in $\Gamma^{m}(\rd)$, again
 depending only on $s,d$.
 Hence, from \eqref{c2}
 we get
 \begin{equation}\label{c3}
 \|E\circ p_{\alpha,\beta,\delta,\ga_0,\ga_1,\ldots,\ga_j}(x,D)
 \|_{\mathcal{B}(Q^s)}\leq
 C_s^{|\ga_0|+|\delta|+1}\ga_0!\delta!.
\end{equation}
Since $|\tg_k|=|\ga_k|$,
$0\leq k\leq j$, we have
\begin{equation}\label{c5}
\frac{|\beta-\tilde{\delta}|!|\alpha-\delta-\tg_0-\ldots-\tg_j|!}{|\alpha-
\delta|!|\beta-\tid-\ga_0-\ldots-\ga_j|!
}\leq1,
\end{equation}
(recall that if
$|\tid|<|\delta|$ then
$\beta-\tid=\ga_0=\ldots=\ga_j=\tg_0=\ldots=\tg_j=0$).\par
By \eqref{c1bis}, \eqref{c3}
\eqref{c5}, we get in this
case
\begin{multline}\label{c4bis}
\frac{\eps^{|\alpha|+|\beta|}}{|\alpha|!}
|C_{\alpha,\beta,\delta,\ga_0,\ga_1,\ldots,\ga_j}|
\|E\circ p_{\alpha,\beta,\delta,\ga_0,\ga_1,\ldots,\ga_j}(x,D)(
x^{\beta-\tid-\ga_0-\ldots-\ga_j}\partial^{\alpha-\delta-\tg_0-\ldots-\tg_j}
u)\|_{Q^s}\\
\leq
C_s(C_s\epsilon)^{|\delta|+|\tilde{\delta}|+|\ga_0+\ldots+\ga_j|+|\tg_0+\ldots+\tg_{j}|}
\frac{\eps^{|\alpha|+|\beta|-|\delta|-|\tilde{\delta}|
-|\ga_0+\ldots+\ga_j|-|\tg_0+\ldots+\tg_{j}|}}
{|\alpha-\delta-\tg_0-\ldots-\tg_j|!}\\
\times
\|x^{\beta-\tid-\ga_0-\ldots-\ga_j}\partial^{\alpha-\delta-\tg_0-\ldots-\tg_j}
u\|_{Q^s}.
\end{multline}
Now, the assumption $|\alpha|\geq|\beta|$ and the choice of $\tilde{\delta}$ and $\tg_j$, $j\geq0$, imply $M(\alpha,\beta)=|\alpha|!$ and $|\beta-\tid-\ga_0-\ldots-\ga_j| \leq
|\alpha-\delta-\tg_0-\ldots-\tg_j|$.
Hence
\[
M(\alpha-\delta-\tg_0-\ldots-\tg_j,\beta-\tid-\ga_0-\ldots-\ga_j)=|\alpha-\delta-\tg_0-\ldots-\tg_j|!.
\]
We can therefore rewrite \eqref{c4bis} as
\begin{multline}\label{c5bis}
\frac{\eps^{|\alpha|+|\beta|}}{M(\alpha,\beta)}
|C_{\alpha,\beta,\delta,\ga_0,\ga_1,\ldots,\ga_j}|
\|E\circ p_{\alpha,\beta,\delta,\ga_0,\ga_1,\ldots,\ga_j}(x,D)(
x^{\beta-\tid-\ga_0-\ldots-\ga_j}\partial^{\alpha-\delta-\tg_0-\ldots-\tg_j}
u)\|_{Q^s}  \\ \leq
C_s(C_s\epsilon)^{|\delta|+|\tilde{\delta}|+|\ga_0+\ldots+\ga_j|+|\tg_0+\ldots+\tg_{j}|} \times
\\ \times \frac{\eps^{|\alpha|+|\beta|-|\delta|-|\tilde{\delta}|
-|\ga_0+\ldots+\ga_j|-|\tg_0+\ldots+\tg_{j}|}}
{M(\alpha-\delta-\tg_0-\ldots-\tg_j,\beta-\tid-\ga_0-\ldots-\ga_j)}
\|x^{\beta-\tid-\ga_0-\ldots-\ga_j}\partial^{\alpha-\delta-\tg_0-\ldots-\tg_j}
u\|_{Q^s}.
\end{multline}

We now perform the change of variables
$\tilde{\alpha}=\alpha-\delta-\tg_0-\ldots-\tg_j$,
$\tilde{\beta}=\beta-\tid-\ga_0-\ldots-\ga_j$. In fact, the
map
$(\alpha,\beta,\delta,j,\gamma_0,\gamma_1,\ldots,\gamma_j)\to
(\tilde{\alpha},\tilde{\beta},\delta,j,\gamma_0,\gamma_1,\ldots,\gamma_j)$
defined in this way is not injective, because of the
presence of $\tilde{\delta},\tg_0,\ldots,\tg_j$ (of course,
one should think of $\tilde{\delta}$ as a function
of $\alpha,\beta,\delta$, and to every $\tg_j$, $j\geq0$,
as a function of $\alpha,\beta,\delta,\gamma_{k}$, $k\leq
j$). Anyhow, since $|\tilde{\delta}|\leq|\delta|$ and
$|\tg_j|=|\ga_j|$, the number of pre-images  of a given
point is at most
$2^{|\delta|+|\ga_0|+\ldots+|\ga_j|+d(j+2)}$. Hence we
deduce from  \eqref{c0ter} and \eqref{c5bis} that, if
$\epsilon$ is small enough,
\begin{multline}\label{ultima0}
\sum_{\stackrel{|\alpha|+|\beta|\leq N}{
|\alpha|\geq|\beta|}}\frac{\eab}{\mab}
\|E[P,x^\beta\partial^\alpha]u\|_{Q^s} \leq \\
 2^dC_s \sum_{|\tilde{\alpha}|+|\tilde{\beta}|\leq N-1}
\frac{\eps^{|\tilde{\alpha}|+|\tilde{\beta}|}}{M(\tilde{\alpha},\tilde{\beta})}\|x^{\tilde{\beta}}\partial^{\tilde{\alpha}}
u\|_{Q^s} \sum_{j=0}^r
2^{d(j+1)}\sum_{\delta}\sum_{\stackrel{\ga_1\not=0,
\ldots,\ga_j\not=0}{ \ga_0:\,(\delta,\ga_0)\not=(0,0)}}
\hskip-17pt
(2C_s\epsilon)^{|\delta|+|\ga_0+\ga_1+\ldots+\ga_j|}\\
\leq S^{s,\eps}_{N-1}[u]\sum_{j=0}^r (C'_s\eps)^{j+1}
 \leq C''_s\eps
 S^{s,\eps}_{N-1}[u].
\end{multline}


\par\medskip\noindent
 {\bf
Case $|\beta|>|\alpha|$}. Here it is convenient to get
separate estimates when $|x|$ is large or small at the
scale $|\beta|^{1/2}$. To make this precise, consider a
function $\varphi\in C^\infty_0 (\rd)$, $\varphi(x)=1$ for $|x|\leq
1$ and $\varphi(x)=0$ for $|x|\geq 2$. Let then
\[
\varphi_\beta(x)=\varphi\Big(\frac{x}{|\beta|^{1/2}}\Big),
\]
(notice that $\beta\not=0$, because of our hypothesis $|\beta|>|\alpha|$).
Hence $\varphi_\beta(x)=1$ for $|x|\leq|\beta|^{1/2}$ and
$\varphi_\beta(x)=0$ for $|x|\geq 2|\beta|^{1/2}$. Moreover, we have
\begin{equation}\label{cutoff1}
|\partial^\gamma \varphi_\beta(x)|\leq
C_\gamma|\beta|^{-|\gamma|/2},\qquad \gamma\in\mathbb{N}^{d},\
x\in\rd.
\end{equation}
for constants $C_\gamma>0$.\par
We write
\[
[P,x^\beta\partial^\alpha]= \varphi_\beta(x)[P,x^\beta\partial^\alpha]+ (1-\varphi_\beta(x))[P,x^\beta\partial^\alpha],
\]
and we split consequently the terms in \eqref{nn5}.\par\medskip

 {\bf Estimate of $\displaystyle \frac{\eab}{\mab}\|E\big((1-\varphi_\beta(x))[P,x^\beta
\partial^\alpha]u\big)\|_{Q^s}
$}.

We use the expression in \eqref{c0bis} for $[P,x^\beta\partial^\alpha]$, and we split the second sum, by considering separately the terms with $|\beta-\tilde{\delta}-\gamma_0|\leq|\alpha-\delta|$ or $|\beta-\tilde{\delta}-\gamma_0|>|\alpha-\delta|$. This is equivalent to saying $|\beta-\gamma_0|\leq|\alpha|$ or $|\beta-\gamma_0|>|\alpha|$ because $|\beta|>|\alpha|$ implies $|\tilde{\delta}|=|\delta|$. Moreover we apply the iterative argument at the beginning of the present proof to the terms with $|\beta-\gamma_0|\leq|\alpha|$. We obtain

\begin{equation}\label{c00bis}
 \frac{\eab}{\mab}\|E\big((1-\varphi_\beta(x))[P,x^\beta
\partial^\alpha]u\big)\|_{Q^s}\leq (I)+(II)
\end{equation}

where
\begin{multline}\label{c000bis}
(I)=\sum_{\delta\leq\alpha}\sum_{\stackrel{\gamma_0\leq\beta-\tilde{\delta}:}{
(\delta,\ga_0)\not=(0,0),\ |\beta-\gamma_0|>|\alpha|}} \frac{\eab}{\mab}\binom{\beta-\tilde{\delta}}
{\gamma_0} \binom{\alpha}{\delta}\\
\times \|E\big(x^{\tid}(1-\varphi_\beta(x))(D^{\ga_0}_\xi
\partial^\delta_x p)(x,D)(
x^{\beta-\tid-\ga_0}\partial^{\alpha-\delta} u)\big)\|_{Q^s},
\end{multline}
whereas
\begin{multline}\label{c0000bis}
(II)=\sum_{\delta\leq\alpha}\sum_{j=0}^r
\sum_{\stackrel{\gamma_0\leq\beta-\tilde{\delta}:}{
(\delta,\ga_0)\not=(0,0),\ |\beta-\gamma_0|\leq|\alpha|}}
\sum_{\stackrel{0\not=\ga_1\leq\beta-\tid-\ga_0}{
\ga_1\leq\tg_0}}\cdots
\sum_{\stackrel{0\not=\ga_j\leq\beta-\tid-\ga_0-\ldots-\ga_{j-1}}{
\ga_j\leq \tg_{j-1}}} \frac{\eab}{\mab} \times \\
|C_{\alpha,\beta,\delta,\ga_0,\ga_1,\ldots,\ga_j}|\|E\big((1-\varphi_\beta(x))p_{\alpha,\beta,\delta,\ga_0,\ga_1,\ldots,\ga_j}(x,D)\big(
x^{\beta-\tid-\ga_0-\ldots-\ga_j}\partial^{\alpha-\delta-\tg_0-\ldots-\tg_j}
u\big)\big)\|_{Q^s},
\end{multline}
where the multi-indices $\tg_j$ will be chosen later on,
satisfying $|\tg_j|\leq |\ga_j|$ and $\tg_j\leq
\alpha-\delta-\tg_0-\ldots-\tg_{j-1}$; the constants
$C_{\alpha,\beta,\delta,\ga_0,\ga_1,\ldots,\ga_j}$ satisfy
\eqref{c1bis}, and \eqref{ordinecomm} holds.

\par\medskip\noindent
{\bf Estimate of the terms in {\it (I)} (hence $ |\beta-\gamma_0|>|\alpha|$)}

Since $|\tilde{\delta}|=|\delta|$, by Leibniz' formula, \eqref{Gsymbols} and
\eqref{pr6},
for
every
$\theta,\sigma\in\mathbb{N}^d$
we have
\begin{equation}\label{c2bis0}
|\partial^\theta_\xi\partial^\sigma_x(
x^{\tilde{\delta}}(D^{\gamma_0}_\xi \partial^\delta_x p)(x,\xi))|\leq
C^{|\ga_0|+|\delta|+1}\ga_0!\delta!(1+|x|+|\xi|)^{m-|\gamma_0|-|\theta|}\langle x\rangle^{-|\sigma|}
\end{equation}
for some constant $C$
depending only on $\theta$
and $\sigma$.

Since $1-\varphi_\beta(x)$ is supported where $|x|\geq |\beta|^{1/2}$, using Leibniz' formula again and \eqref{cutoff1} we get

\begin{multline*}\left|\partial^\theta_\xi\partial^\sigma_x(
x^{\tilde{\delta}}(1-\varphi_\beta(x))(D^{\gamma_0}_\xi \partial^\delta_x p)(x,\xi))\right|\leq \\
C^{|\ga_0|+|\delta|+1}\ga_0!\delta!|\beta|^{-\frac{|\gamma_0|}{2}}(1+|x|+|\xi|)^{m-|\theta|}\langle x\rangle^{-|\sigma|}\end{multline*}
for some constant $C$
depending only on $\theta$
and $\sigma$.
As a consequence,
\begin{equation}\label{c3bis2}
 \|E\circ (x^{\tilde{\delta}}(1-\varphi_\beta(x))(D^{\gamma_0}_\xi \partial^\delta_x p)(x,D))
 \|_{\mathcal{B}(Q^s)}\leq
 C_s^{|\ga_0|+|\delta|+1}\ga_0!\delta! |\beta|^{-\frac{|\gamma_0|}{2}}.
\end{equation}
On the other hand, we have
\begin{equation}\label{c3bis20}
\binom{\beta-\tilde{\delta}}{\gamma_0}\binom{\alpha}{\delta}\leq\frac{|\beta-\tilde{\delta}|!|\alpha|!}{|\beta-\tilde{\delta}-\gamma_0|!|\alpha-\delta|!\gamma_0!\delta!}
\end{equation}
as well as

\begin{multline}\label{c3bis21}
\frac{1}{|\beta|!^{1/2}|\alpha|!^{1/2}}\frac{|\beta-\tilde{\delta}|!|\alpha|!}{|\beta-\tilde{\delta}-\gamma_0|!|\alpha-\delta|!}|\beta-\tilde{\delta}-\gamma_0|!^{1/2}|\alpha-\delta|!^{1/2}|\beta|^{-|\gamma_0|/2}\\
=\underbrace{\Big(\frac{|\beta-\tilde{\delta}|!|\alpha|!}{|\alpha-\delta|!|\beta|!}\Big)^{1/2}}_{\leq 1}\underbrace{\Big(\frac{|\beta-\tilde{\delta}|!}{|\beta-\tilde{\delta}-\gamma_0|!}|\beta|^{-|\gamma_0|}\Big)^{1/2}}_{\leq1}\leq 1.
\end{multline}
By \eqref{c3bis2}, \eqref{c3bis20} and \eqref{c3bis21} we
obtain

\begin{multline}\label{c6bisbis}
\frac{\eps^{|\alpha|+|\beta|}}{|\alpha|!^{1/2}|\beta|!^{1/2}}
\binom{\beta-\tilde{\delta}}{\gamma_0}\binom{\alpha}{\delta} \|E\Big(x^{\tilde{\delta}}(1-\varphi_\beta(x))(D^{\gamma_0}_\xi \partial^\delta_x p)(x,D) u\Big)
 \|_{Q^s}\\
\leq C_s(C_s\epsilon)^{2|\delta|+|\ga_0|}
\frac{\eps^{|\alpha|+|\beta|-2|\delta| -|\ga_0|}}
{|\alpha-\delta|!^{1/2}|\beta-\tilde{\delta}-\ga_0|!^{1/2}}
\|x^{\beta-\tid-\ga_0}\partial^{\alpha-\delta} u\|_{Q^s}.
\end{multline}
Since $|\beta|>|\alpha|$ and $|\beta-\gamma_0|>|\alpha|$,
then we have $M(\alpha,\beta)=|\alpha|!^{1/2}|\beta|!^{1/2}$ and
$M(\alpha-\delta,\beta-\tilde{\delta}-\gamma_0)|\alpha-\delta|!^{1/2}|\beta-\tilde{\delta}-\ga_0|!^{1/2}$,
so that \eqref{c6bisbis} can be rephrased as

\begin{multline}\label{c6bisbis2}
\frac{\eps^{|\alpha|+|\beta|}}{M(\alpha,\beta)}
\binom{\beta-\tilde{\delta}}{\gamma_0}\binom{\alpha}{\delta} \|E(x^{\tilde{\delta}}(1-\varphi_\beta(x))(D^{\gamma_0}_\xi \partial^\delta_x p)(x,D) u)
 \|_{Q^s}\\
\leq C_s(C_s\epsilon)^{2|\delta|+|\ga_0|}
\frac{\eps^{|\alpha|+|\beta|-2|\delta| -|\ga_0|}}
{M(\alpha-\delta,\beta-\tilde{\delta}-\ga_0)}
\|x^{\beta-\tid-\ga_0}\partial^{\alpha-\delta} u\|_{Q^s}.
\end{multline}

\par\medskip\noindent
{\bf Estimate of the terms in {\it (II)} (hence
$|\beta-\gamma_0|\leq|\alpha|$)}.\par\medskip\noindent In
the iterative argument which led to \eqref{c0000bis}, we
choose the multi-indices $\tg_j$, $j\geq0$, in the
following way:
 $\tg_0$ is a multi-index satisfying,
  in addition, $|\beta-\tilde{\delta}-\gamma_0|=|\alpha-\delta-\tg_0|$.
  Such a multi-index exists, because $|\tilde{\delta}|=|\delta|$, $|\beta|>|\alpha|$ and $|\beta-\gamma_0|\leq|\alpha|$; moreover $|\gamma_0|-|\tg_0|=|\beta|-|\alpha|$. Similarly, we can choose $\tg_1$ satisfying $|\beta-\tilde{\delta}-\gamma_0-\gamma_1|=|\alpha-\delta-\tg_0-\tg_1|$; in particular $|\tg_1|=|\gamma_1|$.  In general we can choose $\tg_j$ such that
\begin{equation}\label{bede}
|\beta-\tilde{\delta}-\gamma_0-\ldots-\gamma_j|=|\alpha-\delta-\tg_0-\ldots-\tg_j|;
\end{equation}
hence $|\tg_j|=|\gamma_j|$ if $j\geq1$. \par

Notice that now in \eqref{ordinecomm} we have $|\tilde{\delta}|=|\delta|$
and $|\tg_0-\ga_1+\tg_1-\ldots-\ga_j+\tg_j|=|\tg_0|$. Hence, since $|\gamma_0|-|\tg_0|=|\beta|-|\alpha|$,
by \eqref{Gsymbols},
\eqref{pr6},
and Leibniz' formula, for
every
$\theta,\sigma\in\mathbb{N}^d$
we have
\begin{equation}\label{c2bis2}
|\partial^\theta_\xi\partial^\sigma_x
p_{\alpha,\beta,\delta,\ga_0,\ga_1,\ldots,\ga_j}(x,\xi)|\leq
C^{|\ga_0|+|\delta|+1}\ga_0!\delta!(1+|x|+|\xi|)^{m-|\beta|+|\alpha|-|\theta|}\langle x\rangle^{-|\sigma|},
\end{equation}
for some constant $C$
depending only on $\theta$
and $\sigma$.

Since $1-\varphi_\beta(x)$ is supported where $|x|\geq|\beta|^{1/2}$, using Leibniz' formula again and \eqref{cutoff1} we get

\begin{eqnarray*}|\partial^\theta_\xi\partial^\sigma_x
((1-\varphi_\beta(x))p_{\alpha,\beta,\delta,\ga_0,\ga_1,\ldots,\ga_j}(x,\xi))| &\leq &
C^{|\ga_0|+|\delta|+1}|\beta|^{-\frac{|\beta|-|\alpha|}{2}}\ga_0!\delta! \\ &\times& (1+|x|+|\xi|)^{m-|\theta|}\langle x\rangle^{-|\sigma|} \end{eqnarray*}
for some new constant $C$
depending only on $\theta$
and $\sigma$.
We obtain

\begin{equation}\label{c3bis}
 \|E\circ ((1-\varphi_\beta(x))p_{\alpha,\beta,\delta,\ga_0,\ga_1,\ldots,\ga_j}(x,D))
 \|_{\mathcal{B}(Q^s)}\leq
 C_s^{|\ga_0|+|\delta|+1}\ga_0!\delta! |\beta|^{-\frac{|\beta|-|\alpha|}{2}}.
\end{equation}

Moreover we have
\begin{equation}\label{c4bis2}
\frac{1}{|\alpha|!^{1/2}|\beta|!^{1/2}}\frac{|\alpha|!|\beta-\tilde{\delta}|!}{|\alpha-\delta|!}|\beta|^{-\frac{|\beta|-|\alpha|}{2}}=\underbrace{\Big(\frac{|\alpha|!|\beta-\tilde{\delta}|!}{|\alpha-\delta|!|\beta|!}\Big)^{1/2}}_{\leq 1}\underbrace{\Big(\frac{|\beta-\tilde{\delta}|!}{|\alpha-\delta|!}|\beta|^{-|\beta|+|\alpha|}\Big)^{1/2}}_{\leq 1}\leq 1.
\end{equation}

By \eqref{c1bis}, \eqref{c3bis}, \eqref{c4bis2}, we get

\begin{multline}\label{c6bis}
\frac{\eps^{|\alpha|+|\beta|}}{|\alpha|!^{1/2}|\beta|!^{1/2}}
|C_{\alpha,\beta,\delta,\ga_0,\ga_1,\ldots,\ga_j}|
\\ \times \|E\big((1-\varphi_\beta(x))p_{\alpha,\beta,\delta,\ga_0,\ga_1,\ldots,\ga_j}(x,D)(
x^{\beta-\tid-\ga_0-\ldots-\ga_j}\partial^{\alpha-\delta-\tg_0-\ldots-\tg_j}
u)\big)\|_{Q^s}\\
\leq
C_s(C_s\epsilon)^{|\delta|+|\tilde{\delta}|+|\ga_0+\ldots+\ga_j|+|\tg_0+\ldots+\tg_{j}|}
\frac{\eps^{|\alpha|+|\beta|-|\delta|-|\tilde{\delta}|
-|\ga_0+\ldots+\ga_j|-|\tg_0+\ldots+\tg_{j}|}}
{|\beta-\tid-\ga_0-\ldots-\ga_j|!}\\
\times
\|x^{\beta-\tid-\ga_0-\ldots-\ga_j}\partial^{\alpha-\delta-\tg_0-\ldots-\tg_j}
u\|_{Q^s}.
\end{multline}
Since $|\beta|>|\alpha|$, we have $M(\alpha,\beta)=|\alpha|!^{1/2}|\beta|!^{1/2}$, whereas from \eqref{bede} we see that $M(\alpha-\delta-\tg_0-\ldots-\tg_j,\beta-\tid-\ga_0-\ldots-\ga_j)=|\beta-\tid-\ga_0-\ldots-\ga_j|!$. Hence we deduce from \eqref{c6bis} that
\begin{multline}\label{c5ter}
\frac{\eps^{|\alpha|+|\beta|}}{M(\alpha,\beta)}
|C_{\alpha,\beta,\delta,\ga_0,\ga_1,\ldots,\ga_j}|
\\ \times \|E\big((1-\varphi_\beta(x))p_{\alpha,\beta,\delta,\ga_0,\ga_1,\ldots,\ga_j}(x,D)(
x^{\beta-\tid-\ga_0-\ldots-\ga_j}\partial^{\alpha-\delta-\tg_0-\ldots-\tg_j}
u)\big)\|_{Q^s}\\
\leq
C_s(C_s\epsilon)^{|\delta|+|\tilde{\delta}|+|\ga_0+\ldots+\ga_j|+|\tg_0+\ldots+\tg_{j}|}
\frac{\eps^{|\alpha|+|\beta|-|\delta|-|\tilde{\delta}|
-|\ga_0+\ldots+\ga_j|-|\tg_0+\ldots+\tg_{j}|}}
{M(\alpha-\delta-\tg_0-\ldots-\tg_j,\beta-\tid-\ga_0-\ldots-\ga_j)}\\
\times
\|x^{\beta-\tid-\ga_0-\ldots-\ga_j}\partial^{\alpha-\delta-\tg_0-\ldots-\tg_j}
u\|_{Q^s}.
\end{multline}

We now use \eqref{c00bis}, \eqref{c000bis},
\eqref{c0000bis} \eqref{c6bisbis2}, \eqref{c5ter} to
conclude, by the same arguments as in the case
$|\alpha|\geq|\beta|$, that

\begin{equation}\label{ultima1}
\sum_{|\alpha|+|\beta|\leq N\atop |\beta|>|\alpha|}\frac{\eab}{\mab}\|E\big((1-\varphi_\beta(x))[P,x^\beta\partial^\alpha]u\big)\|_{Q^s} \leq C'_s\eps S^{s,\eps}_{N-1}[u].
\end{equation}

\par\medskip

 {\bf Estimate of $\displaystyle \frac{\eab}{\mab}\|E\big(\varphi_\beta(x)[P,x^\beta
\partial^\alpha]u\big)\|_{Q^s}
$}.\par We now start from the formula \eqref{c0}. For any
fixed $\alpha,\beta,\delta$, we choose
$\tilde{\delta}\leq\beta$ such that
$|\beta-\tilde{\delta}|=|\alpha-\delta|$, which is possible
because here $|\beta|>|\alpha|$. Writing $x^\beta
=x^{\tilde{\delta}}x^{\beta-\tilde{\delta}}$ in \eqref{c0}
and using \eqref{pr1} we still get the formula
\eqref{c0bis} (notice however the choice of
$\tilde{\delta}$ is different from the one we made there).
We now apply the iterative argument detailed at the
beginning of the present proof, which led to \eqref{c0ter},
where the coefficients
$C_{\alpha,\beta,\delta,\ga_0,\ga_1,\ldots,\ga_j}$ satisfy
\eqref{c1bis} and \eqref{ordinecomm} holds. The
multi-indices $\tg_j$, $j\geq0$, are chosen here to
satisfy, in addition, $|\tg_j|=|\gamma_j|$, which is
possible because $|\beta-\tilde{\delta}|=|\alpha-\delta|$.
Hence,  we can rewrite \eqref{c1bis} as
\begin{equation}\label{c1bis19p}
|C_{\alpha,\beta,\delta,\ga_0,\ga_1,\ldots,\ga_j}|\leq
\frac{|\alpha|!}
{\delta!\ga_0!|\beta-\tid-\ga_0-\ldots-\ga_j|!}
2^{|\tg_0+\ldots+\tg_{j-1}|}.
\end{equation}
Now, on the support of $\varphi_\beta$ we have  $|x|\leq 2|\beta|^{1/2}$; moreover we have $|\tilde{\delta}|=|\beta|-|\alpha|+|\delta|$. Hence it follows from \eqref{Gsymbols} and \eqref{ordinecomm} that

$$|\partial^\theta_\xi\partial^\sigma_x
\big(\varphi_\beta (x) p_{\alpha,\beta,\delta,\ga_0,\ga_1,\ldots,\ga_j}(x,\xi)\big)|\leq
C^{|\ga_0|+|\tilde{\delta}|+1}\ga_0!\delta!|\beta|^{\frac{|\beta|-|\alpha|}{2}}(1+|x|+|\xi|)^{m-|\theta|}\langle x\rangle^{-|\sigma|}
$$
for some constant $C$ depending on $\sigma$, $\theta$. As a consequence,
\begin{equation}\label{c3bis19p}
 \|E\circ (\varphi_\beta(x)p_{\alpha,\beta,\delta,\ga_0,\ga_1,\ldots,\ga_j}(x,D))
 \|_{\mathcal{B}(Q^s)}\leq
 C_s^{|\ga_0|+|\tilde{\delta}|+1}\ga_0!\delta! |\beta|^{\frac{|\beta|-|\alpha|}{2}}.
\end{equation}
Now we see from Stirling's formula that, for some $C>1$,
\begin{equation}\label{c4bis19p}
\frac{|\alpha|!^{1/2}|\beta|^{\frac{|\beta|-|\alpha|}{2}}}{|\beta|!^{1/2}}\leq C^{|\beta|-|\alpha|}\leq C^{|\tilde{\delta}|}.
\end{equation}
By applying \eqref{c1bis19p}, \eqref{c3bis19p} and \eqref{c4bis19p} we obtain
\begin{multline}\label{c6bis19p}
\frac{\eps^{|\alpha|+|\beta|}}{|\alpha|!^{1/2}|\beta|!^{1/2}}
|C_{\alpha,\beta,\delta,\ga_0,\ga_1,\ldots,\ga_j}|
\\ \times \|E\big(\varphi_\beta(x)p_{\alpha,\beta,\delta,\ga_0,\ga_1,\ldots,\ga_j}(x,D)(
x^{\beta-\tid-\ga_0-\ldots-\ga_j}\partial^{\alpha-\delta-\tg_0-\ldots-\tg_j}
u)\big)\|_{Q^s}\\
\leq
C_s(C_s\epsilon)^{|\delta|+|\tilde{\delta}|+|\ga_0+\ldots+\ga_j|+|\tg_0+\ldots+\tg_{j}|}
\frac{\eps^{|\alpha|+|\beta|-|\delta|-|\tilde{\delta}|
-|\ga_0+\ldots+\ga_j|-|\tg_0+\ldots+\tg_{j}|}}
{|\beta-\tid-\ga_0-\ldots-\ga_j|!}\\
\times
\|x^{\beta-\tid-\ga_0-\ldots-\ga_j}\partial^{\alpha-\delta-\tg_0-\ldots-\tg_j}
u\|_{Q^s}.
\end{multline}
Since $|\beta|>|\alpha|$ we have $M(\alpha,\beta)=|\alpha|!^{1/2}|\beta|!^{1/2}$,  whereas our choice of $\tilde{\delta}$ and $\tg_j$ implies that $|\alpha-\delta-\tg_0-\ldots-\tg_j|=|\beta-\tid-\ga_0-\ldots-\ga_j| $. Then we have $M(\alpha-\delta-\tg_0-\ldots-\tg_j,\beta-\tid-\ga_0-\ldots-\ga_j)=|\beta-\tid-\ga_0-\ldots-\ga_j|!$ and we can rewrite \eqref{c6bis19p} as

\begin{multline}\label{c5ter19p}
\frac{\eps^{|\alpha|+|\beta|}}{\mab}
|C_{\alpha,\beta,\delta,\ga_0,\ga_1,\ldots,\ga_j}|
\\ \times \|E\big(\varphi_\beta(x)p_{\alpha,\beta,\delta,\ga_0,\ga_1,\ldots,\ga_j}(x,D)(
x^{\beta-\tid-\ga_0-\ldots-\ga_j}\partial^{\alpha-\delta-\tg_0-\ldots-\tg_j}
u)\big)\|_{Q^s}\\
\leq
C_s(C_s\epsilon)^{|\delta|+|\tilde{\delta}|+|\ga_0+\ldots+\ga_j|+|\tg_0+\ldots+\tg_{j}|}
\frac{\eps^{|\alpha|+|\beta|-|\delta|-|\tilde{\delta}|
-|\ga_0+\ldots+\ga_j|-|\tg_0+\ldots+\tg_{j}|}}
{M(\alpha-\delta-\tg_0-\ldots-\tg_j,\beta-\tid-\ga_0-\ldots-\ga_j)}\\
\times
\|x^{\beta-\tid-\ga_0-\ldots-\ga_j}\partial^{\alpha-\delta-\tg_0-\ldots-\tg_j}
u\|_{Q^s}.
\end{multline}

It follows from \eqref{c0ter} and \eqref{c5ter19p}, by the same arguments\footnote{To be precise, here we do not have longer $|\tilde{\delta}|\leq|\delta|$, but rather $|\tilde{\delta}|=|\beta|-|\alpha|+|\delta|$, so that the number of multi-indices $\tilde{\delta}$ which may arise is estimated by $2^{|\beta|-|\alpha|+|\delta|+d-1}$, and this factor is absorbed by the power $\epsilon^{|\tilde{\delta}|}=\epsilon^{|\beta|-|\alpha|+|\delta|}$ in \eqref{c5ter19p}.} as in the case $|\alpha|\geq|\beta|$, that

\begin{equation}
\sum_{|\alpha|+|\beta|\leq N\atop |\beta|>|\alpha|}\frac{\eab}{\mab}\|E\big(\varphi_\beta(x)[P,x^\beta\partial^\alpha]u\big)\|_{Q^s} \leq C'_s\eps
 S^{s,\eps}_{N-1}[u].
\end{equation}
This estimate, together with \eqref{ultima0} and \eqref{ultima1}, implies \eqref{nn5}, which concludes the proof.
\end{proof}


We now turn the attention to
the nonlinear term. We first treat the case when $m \geq 1.$
\begin{proposition}\label{nonlin1}
Let $E\in{\rm OP}\Gamma^{-m}(\rd)$, $m\geq1, h \in \N,
\rho_{1}, \ldots, \rho_{l}\in\mathbb{N}^d$, with
$h+\max\{|\rho_{k}|\}\leq m-1.$ Let $g$ be a real-analytic
function on $\R^{d}$ satisfying the estimates
\begin{equation}\label{coeffnonlin}
|\partial^{\alpha}g(x)|\leq C^{|\alpha|+1}\alpha!\px^{h-|\alpha|}, \qquad x \in \rd, \alpha \in \N^{d}
\end{equation}
for some $C>0$ independent on $\alpha$. Then for every integer
$s>d/2+\max_k\{|\rho_k|\}$
there exists a constant
$C_s>0$ such that, for every
$\eps$ small enough, $N \geq 1$ and $u\in
\cS(\rd)$, the following
estimates hold:
\begin{equation}\label{c4tris}
\sum_{0<|\alpha|+|\beta|\leq
N}\frac{\eab}{\mab}\|E\big(
x^\beta\partial^\alpha\big(g(x) \prod_{k=1}^l \partial^{\rho_k}u\big)\big)\|_{Q^s}
\leq C_s \ve(S_{N-1}^{s,\eps}[u])^{l}.
\end{equation}
\end{proposition}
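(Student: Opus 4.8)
The plan is to distribute the operator $x^{\beta}\partial^{\alpha}$ over the product $g\prod_{k}\partial^{\rho_{k}}u$ by Leibniz' rule, to reduce the $Q^{s}$-norm of each resulting term to a product of $Q^{s}$-norms by the Schauder estimate of Proposition \ref{schauderQ}, and to match the arising combinatorial constants against $\mab$ and the powers of $\eps$. In outline this parallels the proof of Proposition \ref{commutatore}, the role of the commutator being now played by the distribution of derivatives onto the nonlinear product, and the strength of the parametrix $E\in{\rm OP}\Gamma^{-m}(\rd)$ being enhanced by the assumption $h+\max_{k}|\rho_{k}|\le m-1$.

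First I would apply Leibniz' formula to $\partial^{\alpha}\big(g\prod_{k=1}^{l}\partial^{\rho_{k}}u\big)$, writing $\alpha=\alpha_{0}+\alpha_{1}+\cdots+\alpha_{l}$, and then write the monomial $x^{\beta}$ as $x^{\beta_{1}}\cdots x^{\beta_{l}}$, distributing the $x$-weight among the $l$ factors coming from $u$ and leaving $g$ free of $x$-powers; the splitting $\beta=\beta_{1}+\cdots+\beta_{l}$ is chosen, as in Proposition \ref{commutatore}, according to whether $|\alpha|\ge|\beta|$ or $|\beta|>|\alpha|$, so that $\sum_{k}\max(|\alpha_{k}+\rho_{k}|,|\beta_{k}|)$ stays compatible with $\mab$. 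This writes $x^{\beta}\partial^{\alpha}\big(g\prod_{k}\partial^{\rho_{k}}u\big)$ as a finite sum of terms $c_{\alpha_{0},\ldots,\alpha_{l}}\,(\partial^{\alpha_{0}}g)(x)\prod_{k=1}^{l}x^{\beta_{k}}\partial^{\alpha_{k}+\rho_{k}}u$ with $|c_{\alpha_{0},\ldots,\alpha_{l}}|\le\binom{\alpha}{\alpha_{0},\ldots,\alpha_{l}}$, hence $|c_{\alpha_{0},\ldots,\alpha_{l}}|\,\alpha_{0}!\le|\alpha|!\,/(|\alpha_{1}|!\cdots|\alpha_{l}|!)$ by \eqref{pr8}.

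Next, for each such term I would compose $E$ with the multiplication by $\partial^{\alpha_{0}}g$: since $g$ satisfies \eqref{coeffnonlin}, this operator belongs to ${\rm OP}\Gamma^{-m+h}(\rd)$, and, $h+\max_{k}|\rho_{k}|\le m-1$ forcing its order to be $\le-1$, it is bounded on $Q^{s}(\rd)$ with operator norm controlled by a finite number of seminorms of its symbol; by the composition property of the classes ${\rm OP}\Gamma^{\bullet}(\rd)$, Leibniz' rule and \eqref{coeffnonlin}, these seminorms are bounded by $C_{s}^{|\alpha_{0}|+1}\alpha_{0}!$, uniformly in $N$ and $u$. Applying this bound and then the Schauder estimate (Proposition \ref{schauderQ}; recall $s>d/2$) $l-1$ times, each term is estimated by $C_{s}^{|\alpha_{0}|+l}\alpha_{0}!\prod_{k=1}^{l}\|x^{\beta_{k}}\partial^{\alpha_{k}+\rho_{k}}u\|_{Q^{s}}$. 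Inserting this into the left-hand side of \eqref{c4tris}, reading $\|x^{\beta_{k}}\partial^{\alpha_{k}+\rho_{k}}u\|_{Q^{s}}$ as a generic term of the series in \eqref{accaenne2}, and using elementary factorial inequalities (in the spirit of \eqref{pr4}, \eqref{pr8}) to bound $\mab^{-1}\,\frac{|\alpha|!}{|\alpha_{1}|!\cdots|\alpha_{l}|!}\,\prod_{k}M(\alpha_{k}+\rho_{k},\beta_{k})$ by $C^{|\alpha|+|\beta|}$ for the chosen splitting, the estimate \eqref{c4tris} would reduce to a summation of geometric type over all multi-indices and all Leibniz splittings, the number of the latter being itself at most exponential in $|\alpha|+|\beta|$.

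The main obstacle is to make this summation yield a \emph{spare} factor of $\eps$ and the partial sums $S^{s,\eps}_{N-1}[u]$ — rather than $S^{s,\eps}_{N-1+\max_{k}|\rho_{k}|}[u]$ — on the right-hand side. Indeed, a Leibniz term may concentrate almost all of $x^{\beta}\partial^{\alpha}$ on a single factor $\partial^{\rho_{k}}u$, which then carries total order up to $|\alpha|+|\beta|+\max_{k}|\rho_{k}|$, and a bare estimate of such a term provides no surplus power of $\eps$. This is exactly where the full strength of $h+\max_{k}|\rho_{k}|\le m-1$ is used: the operator obtained by composing $E$ with multiplication by $\partial^{\alpha_{0}}g$ has order $\le-1-\max_{k}|\rho_{k}|$, so after extracting from the product, by further applications of Leibniz' rule, up to $\max_{k}|\rho_{k}|+1$ of the derivatives $\partial^{\rho_{k}}$, $\partial^{\alpha_{k}}$ (or, when $\alpha=0$, of the factors $x^{\beta_{k}}$) and composing the operator with them, one still obtains a $Q^{s}$-bounded operator; absorbing these factors lowers the total order of the surviving $u$-factors to at most $|\alpha|+|\beta|-1\le N-1$, and the elementary inequality $\frac{\eps^{|\alpha|+|\beta|}}{\mab}\le\eps\,\frac{\eps^{|\alpha'|+|\beta'|}}{M(\alpha',\beta')}$, valid whenever $\alpha'\le\alpha$, $\beta'\le\beta$ and $|\alpha'|+|\beta'|\le|\alpha|+|\beta|-1$, produces the remaining power of $\eps$. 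Once this reduction is carried out, the final summation is entirely analogous to the one concluding the proof of Proposition \ref{commutatore}.
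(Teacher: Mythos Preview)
Your outline captures the right general architecture (Leibniz distribution, operator boundedness of $E\circ(\partial^{\alpha_0}g)$, Schauder, factorial bookkeeping), and the splitting of $x^\beta$ according to the sign of $|\alpha|-|\beta|$ is indeed what the paper does. But the last paragraph---the mechanism that is supposed to produce the spare $\eps$ and land the product inside $S^{s,\eps}_{N-1}[u]$ rather than $S^{s,\eps}_{N-1+\max_k|\rho_k|}[u]$---contains a genuine gap.

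The difficulty is twofold. First, once you have written the generic term as $E\big((\partial^{\alpha_0}g)\prod_k x^{\beta_k}\partial^{\alpha_k+\rho_k}u\big)$, the derivatives $\partial^{\rho_k}$ (and $\partial^{\alpha_k}$) act on a \emph{single factor} of a pointwise product; they are not compositions with the outer operator $E\circ(\partial^{\alpha_0}g)$, and ``further applications of Leibniz' rule'' do not let you peel them off and compose them with that operator. Pulling a derivative out of a product via inverse Leibniz spawns terms where the derivative hits the other factors, so iterating this $M+1$ times does not give a clean reduction. Second, even at the level of $\eps$-counting your scheme is short of powers: after Schauder you want to read each factor as a term of $S^{s,\eps}$, which costs $\eps^{|\alpha_k+\rho_k|+|\beta_k|}$; summing over $k$ this is $\eps^{|\alpha|-|\alpha_0|+|\beta|+\sum_k|\rho_k|}$, and with only $\eps^{|\alpha|+|\beta|}$ available you need $|\alpha_0|\ge 1+\sum_k|\rho_k|$, which fails e.g.\ when $\alpha_0=0$.

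The paper avoids both issues by a different placement of the ``extraction''. One factor $x_j$ (when $\beta\neq0$) or $\partial_j$ (when $\beta=0$) is pulled off from $x^\beta\partial^\alpha$ \emph{before} Leibniz is applied; this factor is then genuinely composed on the right of $E$, and together with $x^{\tilde{\delta}_0}\partial^{\delta_0}g$ one obtains an operator of order $\le -m+h+1\le -M$, hence bounded $Q^{s-M}\to Q^s$. Schauder is then applied in $Q^{s-M}$ (which is legitimate since $s-M>d/2$), and the $\rho_k$'s are removed not by ``extraction'' but by the commutator identity $x^{\gamma_k}\partial^{\delta_k+\rho_k}u=\partial^{\rho_k}(x^{\gamma_k}\partial^{\delta_k}u)+[x^{\gamma_k}\partial^{\delta_k},\partial^{\rho_k}]u$, using that $\partial^{\rho_k}:Q^s\to Q^{s-M}$ is bounded and invoking Proposition~\ref{commutatore} (with $\partial^{\gamma}$ in place of $P$ and an isomorphism $T\in{\rm OP}\Gamma^{-M}$ in place of $E$) to control the commutator. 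The single extracted $x_j$ or $\partial_j$ is exactly what yields the spare $\eps$ and the drop from $N$ to $N-1$.
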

\begin{proof}

We first treat the terms with $\beta\not=0$ in the sum
\eqref{c4tris}. Let
$j\in\{1,\ldots,d\}$ such
that $\beta_{j}\not=0$. By Leibniz' formula, we have
\[
x^\beta\partial^\alpha\big(g(x)
\prod_{k=1}^l
\partial^{\rho_k} u\big)=x_{j}
\sum_{\delta_0\leq \alpha-e_j}\sum_{\delta_1+\ldots+\delta_{l}=\alpha-\delta_0}
\frac{\alpha!}{\delta_0!\delta_1!
\ldots\delta_{l}!}\partial^{\delta_{0}}g(x)
x^{\beta-e_j}\prod_{k=1}^l
\partial^{\delta_k+\rho_k} u.
\]
 Let now $\tilde{\delta}_0$ be a
 multi-index of maximal length among
 those satisfying
 $|\tilde{\delta}_0|\leq|\delta_0|$
 and $\tilde{\delta}_0\leq\beta-e_j$. Write $x^{\beta-e_j}=x^{\tilde{\delta}_0} x^{\beta-e_j-\tilde{\delta}_0}$ and
 observe that the symbol $x_jx^{\tilde{\delta}_0}\partial^{\delta_{0}}g(x)$ belongs to $\Gamma^{h+1}(\rd)$, with every seminorm estimated by $A^{|\delta_0|+1}\delta_0!$ for some positive constant $A$ independent of $\delta_0$. Then
 $E\circ x_{j}
  x^{\tilde{\delta}_0}\partial^{\delta_{0}}g(x)$ belongs to ${\rm OP}\Gamma^{-m+1+h}(\rd)$. Consequently, it is continuous $
 Q^{s-M}(\rd)\to Q^{s}(\rd)$, with $M=\max\{|\rho_k|\}$, since $-m+1+h\leq -M$ and its operator norm is bounded by $A^{|\delta_0|+1}\delta_0!$ for a new constant $A$ independent of $\delta_0$. Then we have
\begin{multline*}
\frac{\eab}{\mab}\|E\big(
x^\beta\partial^\alpha\big(g(x)
\prod_{k=1}^l
\partial^{\rho_k} u\big)\big)\|_{Q^s}\\
 \leq C_s \sum_{\delta_0\leq \alpha}A^{|\delta_0|+1}\sum_{\delta_1+\ldots+\delta_{l}=\alpha-\delta_0}
\frac{\eab}{\mab}\frac{\alpha!}{\delta_1!
\ldots\delta_{l}!}
 \|x^{\beta-e_j-\tilde{\delta}_0}\prod_{k=1}^l
\partial^{\delta_k+\rho_k} u\|_{Q^{s-M}}.
\end{multline*}
We can now write
$$
x^{\beta-e_j-\tilde{\delta}_0}\prod_{k=1}^l
\partial^{\delta_k+\rho_k} u
=\prod_{k=1}^l x^{\ga_k}
\partial^{\delta_k+\rho_k} u,
$$
where
$\ga_1+\ldots+\ga_{l}=\beta-e_j-\tilde{\delta}_0$
and, if $|\beta|\leq
|\alpha|$, with
 $|\ga_k|\leq|\delta_k|$ for $1\leq
k\leq l$ (which is possible because in
that case
$|\beta-e_j-\tilde{\delta}_0|\leq|\alpha-\delta_0|$;
observe that if
$|\tilde{\delta}_0|<|\delta_0|$ then
$\beta-e_j-\tilde{\delta}_0=0$), whereas,
if $|\beta|\geq |\alpha|+1$, with
$|\ga_k|\geq|\delta_k|$ for $1\leq
k\leq l$ (which is possible because in
that case
$|\tilde{\delta}_0|=|\delta_0|$ and
$|\beta-e_j-\tilde{\delta}_0|\geq|\alpha-\delta_0|$). Moreover, if $|\beta|\leq|\alpha|,$ (then $M(\alpha, \beta)=|\alpha|!$), we have by \eqref{pr8} the following inequality
\begin{equation}\label{nn3}
\frac{1}{|\alpha|!}\cdot
\frac{\alpha!}
{\delta_1!\ldots\delta_{l}!}
\leq\frac{1}{|\delta_1|!\ldots|\delta_{l}|!},
\end{equation}
whereas, for $|\beta|\geq|\alpha|+1,$ (then $M(\alpha, \beta)=|\alpha|!^{1/2}|\beta|!^{1/2}$), we have
\begin{equation}\label{nn4}
\frac{1}{|\alpha|!^{1/2}|\beta|!^{1/2}}\cdot
\frac{\alpha!}
{\delta_{1}!\ldots\delta_{l}!}
\leq\frac{1}{(|\delta_{1}|!\ldots|\delta_{l}|!|\ga_1|!
\ldots|\ga_{l}|!)^{1/2}},
\end{equation}
which also follows at once from
\eqref{pr8}.
Hence by Proposition \ref{schauderQ} we get
 \begin{multline}
\frac{\eab}{\mab}\|E\big(
x^\beta\partial^\alpha\big(g(x)
\prod_{k=1}^l
\partial^{\rho_k} u\big)\big)\|_{Q^s}
\\ \leq C_s\eps
\sum_{\delta_0\leq \alpha}(A\ve)^{|\delta_0|}\ve^{|\tilde{\delta}_0|}\sum_{\delta_1+\ldots+\delta_{l} = \alpha-\delta_0}
\prod_{k=1}^{l}
\frac{\eps^{|\gamma_k|+|\delta_k|}}
{M(\gamma_k,\delta_k)}\|x^{\ga_k}
\partial^{\delta_k+\rho_k}
u\|_{Q^{s-M}}. \label{nn1bis}
\end{multline}
 Let now $T\in{\rm OP}\Gamma^{-M}(\rd)$ be any operator which gives  an isomorphism $Q^{s-M}\to Q^{s}$, and write
$x^{\gamma_k}\partial^{\delta_k+\rho_k}u=\partial^{\rho_k}
\big(x^{\gamma_k}\partial^{\delta_k}
u\big)+[x^{\gamma_k}\partial^{\delta_k},\partial^{\rho_k}]u$
in the last term of \eqref{nn1bis}. We get
\[
\|x^{\gamma_k}\partial^{\delta_k+\rho_k}u\|_{Q^{s-M}}
\leq
\|x^{\gamma_k}\partial^{\delta_k}
u\|_{Q^s}+\| T [x^{\gamma_k}
\partial^{\delta_k},\partial^{\rho_k}]u\|_{Q^s},
\]
where we used the fact that $\partial^{\rho_k}$ is bounded $Q^s(\rd)\to Q^{s-M}(\rd)$.

Using this last estimate we
obtain
\begin{multline*}
\frac{\eab}{\mab}\|E\big(
x^\beta\partial^\alpha\big(g(x)
\prod_{k=1}^l
\partial^{\rho_k} u\big)\big)\|_{Q^s}
\leq  C_s\eps \sum_{\delta_0 \leq
\alpha}(A\ve)^{|\delta_0|}\hskip-10pt\sum_{
\tilde{\delta}_0\leq \beta-e_j}\hskip-7pt
\ve^{|\tilde{\delta}_0|} \hskip-7pt \\ \times
\sum_{\delta_1+\ldots+\delta_{l}=\alpha-\delta_0}\prod_{k=1}^{l}\frac{\eps^{|\gamma_k|+|\delta_k|}}
{M(\gamma_k,\delta_k)}  \Big\{
\|x^{\ga_k}\partial^{\delta_k} u\|_{Q^s}+\sum_{|\gamma|\leq
m-1}\|T[x^{\gamma_k}
\partial^{\delta_k},\partial^\gamma]u\|_{Q^s}\Big\},
\end{multline*}
(recall that the $\gamma_k$'s depend on
$\alpha,\beta,\tilde{\delta}_0,\delta_1,\ldots,\delta_{l}$ and the choice of $e_j$). We
now sum the above expression over
$|\alpha|+|\beta|\leq N$,
$\alpha\not=0$.  When $\alpha$ and
$\beta$ vary but $\delta$ and $\tilde{\delta_0}$ are fixed, every term in the above
sum also appears in the development
of
\[
\Big\{\sum_{|\tilde{\alpha}|+|\tilde{\beta}|\leq
N-1}\frac{\eps^{|\tilde{\alpha}|+|\tilde{\beta}|}}
{M(\tilde{\alpha},\tilde{\beta})}\Big\{
\|x^{\tilde{\beta}}\partial^{\tilde{\alpha}}
u\|_{Q^s}+\sum_{|\gamma|\leq
m-1}\| T[x^{\tilde{\beta}}
\partial^{\tilde{\alpha}},\partial^\gamma]u\|_{Q^s} \Big\}\Big\}^{l}.
\]
and is repeated at most $d$ times (corresponding to the possibile choices of $e_j$).
Hence, taking $\ve$ sufficiently small, we
obtain
\begin{align*}
&\sum_{0<|\alpha|+|\beta|\leq N \atop \beta\not=0}\frac{\eab}{\mab}\|E\big(
x^\beta\partial^\alpha\big(g(x)
\prod_{k=1}^l
\partial^{\rho_k} u\big)\big)\|_{Q^s}
\\
&\leq
C''_s\eps\Big\{\sum_{|\tilde{\alpha}|+|\tilde{\beta}|\leq
N-1}
\frac{\eps^{|\tilde{\alpha}|+|\tilde{\beta}|}}
{M(\tilde{\alpha},\tilde{\beta})}\Big\{
\|x^{\tilde{\beta}}\partial^{\tilde{\alpha}}
u\|_{Q^s}+\sum_{|\gamma|\leq
m-1} \|T [x^{\tilde{\beta}}
\partial^{\tilde{\alpha}},\partial^\gamma]u\|_{Q^s} \Big\}\Big\}^{l}\\
&\leq C''_s\eps\big\{
S^{s,\eps}_{N-1}[u]+C'''_s\eps
S^{s,\eps}_{N-2}[u]\big\}^{l}\leq
C''''_s\eps
(S^{s,\eps}_{N-1}[u])^{l},
\end{align*}
where we used Proposition \ref{commutatore} applied with
 $\partial^\gamma$ and $T$ in place
 of $P$ and $E$ respectively, and we understand $S^{s,\eps}_{-1}[u]=0$.\par
  We now treat the
terms with $\beta=0$ in the sum \eqref{c4tris} (recall,
$M(\alpha,0)=|\alpha|!$). Let $\alpha\not=0$ and
$j\in\{1,\ldots,d\}$ such that $\alpha_{j}\not=0$. By
writing $\partial^\alpha=\partial_j\partial^{\alpha-e_j}$
and using Leibniz' formula we have
\[
\partial^\alpha\big(g(x)
\prod_{k=1}^l
\partial^{\rho_k} u\big)=\partial_{j}
\sum_{\delta_0\leq \alpha-e_j}\sum_{\delta_1+\ldots+\delta_{l}=\alpha-e_{j}-\delta_0}
\frac{(\alpha-e_{j})!}{\delta_0!\delta_1!
\ldots\delta_{l}!}\partial^{\delta_{0}}g(x)\prod_{k=1}^l
\partial^{\delta_k+\rho_k} u.
\]
 Observe that
 $E\partial_{j}\circ\partial^{\delta_{0}}g(x)\in{\rm OP}\Gamma^{-m+1+h}(\rd)$ is bounded $
 Q^{s-M}(\rd)\to Q^{s}(\rd)$, with $M=\max\{|\rho_k|\}$, because $-m+1+h\leq -M$, and its operator norm is estimated by $A^{|\delta_0|+1}\delta_0!$ for some positive constant $A$ independent of $\delta_0$. Hence
\begin{multline*}
\frac{\epsilon^{|\alpha|}}{|\alpha|!} \|E\partial^\alpha\big(g(x)
\prod_{k=1}^l
\partial^{\rho_k} u\big)\big)\|_{Q^s}\\
 \leq C_s \sum_{\delta_0\leq \alpha-e_j}A^{|\delta_0|+1}\sum_{\delta_1+\ldots+\delta_{l}=\alpha-e_{j}-\delta_0}
\frac{\epsilon^{|\alpha|}}{|\alpha|!}\frac{(\alpha-e_{j})!}{\delta_1!
\ldots\delta_{l}!}
 \|\prod_{k=1}^l
\partial^{\delta_k+\rho_k} u\|_{Q^{s-M}}.
\end{multline*}

 Using the inequality
\begin{equation*}
\frac{1}{|\alpha|!}\cdot
\frac{(\alpha-e_{j})!}
{\delta_1!\ldots\delta_{l}!}
\leq\frac{1}{|\delta_1|!\ldots|\delta_{l}|!},
\end{equation*}
 Proposition \ref{schauderQ} and the boundedness of $\partial^{\rho_k}: Q^{s}\to Q^{s-M}$ we get
$$\frac{\epsilon^{|\alpha|}}{|\alpha|!} \|E\partial^\alpha\big(g(x)
\prod_{k=1}^l
\partial^{\rho_k} u\big)\big)\|_{Q^s}\leq C_s\eps
\sum_{\delta_0\leq \alpha-e_j}(A\ve)^{|\delta_0|}\hskip-12pt \sum_{\delta_1+\ldots+\delta_{l}
= \alpha-e_{j}-\delta_0}
\prod_{k=1}^{l}
\frac{\eps^{|\delta_k|}}
{|\delta_k|!}\|\nonumber
\partial^{\delta_k}
u\|_{Q^{s}}. $$
By the same arguments as above we obtain
\[
\sum_{0<|\alpha|\leq
N}\frac{\epsilon^{|\alpha|}}{|\alpha|!}\|E\big(
x^\beta\partial^\alpha\big(g(x) \prod_{k=1}^l \partial^{\rho_k}u\big)\big)\|_{Q^s}
\leq C_s \ve(S_{N-1}^{s,\eps}[u])^{l},
\]
which concludes the proof.
\end{proof}

We are now ready to conclude the proof of Theorem
\ref{AA5.1}. \\

\noindent
\textit{End of the proof of
Theorem \ref{AA5.1} (the case $m\geq 1$).} From \eqref{A5.1f} we have, for
$\alpha,\beta\in\N^d$,
$\epsilon>0$,
\[
\frac{\eab}{\mab}
x^\beta\partial^\alpha
Pu=\frac{\eab}{\mab}
x^\beta\partial^\alpha
 f +
 \frac{\eab}{\mab}
x^\beta\partial^\alpha F[u],
\]
so that
$$\frac{\eab}{\mab}P(
x^\beta\partial^\alpha
u)=\frac{\eab}{\mab}[P,x^\beta\partial^\alpha]u+\frac{\eab}{\mab}
x^\beta\partial^\alpha
 f +
 \frac{\eab}{\mab}
x^\beta\partial^\alpha F[u].$$
We now apply to both sides
the parametrix $E$ of $P$.
With $R=EP-I\in{\rm
OP}\Gamma^{-1}(\rd)$ we obtain
\begin{multline*}
\frac{\eab}{\mab}x^\beta\partial^\alpha
u=-\frac{\eab}{\mab}
R(x^\beta\partial^\alpha
u)+\frac{\eab}{\mab}
E[P,x^\beta\partial^\alpha]u\\
+\frac{\eab}{\mab}E(x^\beta\partial^\alpha
f)+
\frac{\eab}{\mab}E(x^\beta\partial^\alpha F[u] ).
\end{multline*}
Taking the $Q^s$ norms and
summing over
$|\alpha|+|\beta|\leq N$ give
\begin{eqnarray}\label{colonna}
S^{s,\epsilon}_N[u] &\leq&
\|u\|_{Q^s}+\sum_{0<|\alpha|+|\beta|\leq
N}\frac{\eab}{\mab}\|R(x^\beta\partial^\alpha
u)\|_{Q^s}  \\ \nonumber
&&+\sum_{0<|\alpha|+|\beta|\leq
N}\frac{\eab}{\mab}\|E[P,x^\beta\partial^\alpha]u\|_{Q^s}
\\ &&+\sum_{0<|\alpha|+|\beta|\leq
N}\frac{\eab}{\mab}\|E(x^\beta
\partial^\alpha f)\|_{Q^s} \nonumber \\
&&+\sum_{0<|\alpha|+|\beta|\leq
N}\frac{\eab}{\mab}\|E(x^\beta\partial^\alpha F[u] )\|_{Q^s}. \nonumber
\end{eqnarray}
The second and the third term in the right-hand side of \eqref{colonna} can be estimated using Propositions \ref{stima1} and \ref{commutatore} while the term containing $f$ is obviously dominated by $S_{\infty}^{s,\ve}[f].$ For the last term we can apply Proposition \ref{nonlin1}. Hence,  we have that, for
$\eps$ small enough,
$$S^{s,\epsilon}_N[u]\leq
\|u\|_{Q^s}+ C_s
S^{s,\epsilon}_\infty[f]+C_s\epsilon\Big(
S^{s,\epsilon}_{N-1}[u]+
\sum_{l}(S^{s,\eps}_{N-1}[u])^{l}\Big).$$
Iterating the last estimate and possibly shrinking $\ve$, we obtain that $S^{s,\eps}_\infty[u]<\infty,$
which implies
$u\in\sg$ by Proposition
\ref{stima0}. \par \qed


\subsection{Proof of Theorem \ref{AA5.1}: the case $0 < m < 1$}
In this case the nonlinearity \eqref{A5.2}, due to the restriction $h+\max\{|\rho_{k}|\} \leq \max\{m-1, 0 \}$ reduces to the following form
\begin{equation}
\label{A5.2tris} F[u]=\sum_{l}
F_{l}(x) u^l,
\end{equation}
the above sum being finite, with $l\in\N$, $l\geq2$, and
$F_{l}(x)$ real-analytic functions satisfying the following
estimates
\begin{equation}\label{coeffnonlinm<1}|\partial^{\alpha}F_{l}(x)|\leq C^{|\alpha|+1}\alpha!\px^{-|\alpha|}, \qquad x \in \R^{d}, \alpha \in \N^{d},
\end{equation}
for some $C>0$ independent of $\alpha.$
\par We follow the
same argument used for the case $m\geq 1$, so that we only
sketch the proof. For technical reasons which will be clear in the sequel, here it is convenient to work in the
framework of the usual Sobolev spaces, i.e. by defining
\begin{equation}\label{25-4}
\widetilde{S}_N^{s,\eps}[f]=\sum_{|\al|+|\beta|\leq N}\frac{\eab}{\mab}
\|x^\be\partial^\al f\|_{H^s},\quad
\widetilde{S}_\infty^{s,\eps}[f]=\sum_{\al,\,\beta \in \N^d}\frac{\eab}{\mab}
\|x^\be\partial^\al f\|_{H^s}. \end{equation} It is easy to
see that the results in Propositions \ref{stima0},
\ref{stima1} and \ref{commutatore} continue to hold with
$S_N^{s,\ve}[u]$ and $S_{\infty}^{s,\ve}[u]$ replaced by $\widetilde{S}_N^{s,\ve}[u]$ and $\widetilde{S}_{\infty}^{s,\ve}[u]$ and with the spaces $Q^s$
replaced by $H^s$ everywhere (operators in ${\rm
OP}\Gamma^0(\rd)$ are bounded on every $H^s$ by Proposition
\ref{25pro} with $m=n=0$). It remains to estimate the
nonlinear term. On this point we observe that although this
term is more elementary than before, the action of the
parametrix gives a lower ``gain'', since
$0<m<1.$ Then we have to modify slightly our technique. We
have the following result.

\begin{proposition}
\label{Gammanonlinearestimates} Let $E \in {\rm
OP}\Gamma^{-m}(\rd), 0<m<1,$ and let $l\in\mathbb{N}$,
  $l\geq2$ and $g$ be a real-analytic function on $\R^{d}$ satisfying the same
  estimates
  as in \eqref{coeffnonlinm<1}. Then, for every integer $s>d/2$
  there exists
a constant $C'_s>0$ and, for every
$\tau>0$, there exists
 $C_{\tau}>0$ such
 that, for every $\eps$ small
 enough, $N \geq 1$ and $u\in \cS(\rd)$
 we have
\begin{multline} \label{nonlinestimate2}
\sum_{0<|\alpha+\beta|\leq N}
\frac{\ve^{|\alpha|+|\beta|}}{M(\alpha,\beta)} \| E
(x^{\beta}\partial^{\alpha}(g(x)u^\ell)) \|_{H^s} \leq  \tau
C'_{s} \| u\|_{H^s}^{\ell-1} S_{N}^{s,\ve}[u]
\\+ C'_{s} (\ve
C_{\tau}+\tau+\ve) (S_{N-1}^{s,\ve}[u])^{\ell}.
\end{multline}
\end{proposition}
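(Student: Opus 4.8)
The plan is to follow the strategy of the case $m\ge 1$ (Proposition \ref{nonlin1}); the essential new point is that the parametrix $E$ now gains only $m<1$ derivatives, so the device of absorbing a whole factor $x_j$ or $\partial_j$ into $E$ — which for $m\ge 1$ simultaneously produced a spare power of $\ve$ and demoted the leading term from order $N$ to order $N-1$ — is no longer available. In its place one uses an interpolation inequality to shave off a $\tau$–small multiple of the leading ``all on one factor'' contribution (which is only linear in $\widetilde S_N^{s,\ve}[u]$ and carries the harmless factor $\|u\|_{H^s}^{\ell-1}$) plus a remainder measured in a weaker norm, in which the weight can in fact still be lowered by one. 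Concretely, since the symbol $e\in\Gamma^{-m}(\rd)$ of $E$ satisfies $(1+|x|+|\xi|)^{-m}\le C\px^{-m/2}\pxi^{-m/2}$, it obeys the hypotheses of Proposition \ref{25pro} with both exponents equal to $m/2$, so $E$ maps $H^{s_1,s_2}(\rd)\to H^{s_1+m/2,\,s_2+m/2}(\rd)$ for all $s_1,s_2$, and in particular $E\colon H^{s-m/2,-m/2}(\rd)\to H^{s}(\rd)$. Interpolating $H^{s-m/2,-m/2}$ between $H^{s,0}=H^s$ and $H^{s-1,-1}$ (interpolation parameter $m/2\in(0,1)$) and using Young's inequality, one obtains: for every $\tau>0$ there is $C_\tau>0$ with
\[
\|E w\|_{H^s}\le \tau\,\|w\|_{H^s}+C_\tau\,\|w\|_{H^{s-1,-1}},\qquad w\in\cS(\rd),
\]
which I would apply with $w=x^\beta\partial^\alpha(g(x)u^\ell)$.

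The core is then an $H^s$–estimate proved exactly as in Proposition \ref{nonlin1}: expanding $\partial^\alpha(g u^\ell)$ by Leibniz' rule, writing $x^\beta$ as a product $\prod_k x^{\gamma_k}$ over the $\ell$ factors of $u$, and using the Schauder inequality in $H^s$ (valid since $s>d/2$, cf.\ \eqref{25-3} with $s_2=0$) together with the factorial bounds \eqref{pr8}, \eqref{pr6} and the analytic estimates \eqref{coeffnonlinm<1} on $g$, one isolates the unique contribution in which all of $\partial^\alpha$ and all of $x^\beta$ land on one factor of $u$, namely $\ell\,g(x)u^{\ell-1}x^\beta\partial^\alpha u$, for which $\|g u^{\ell-1}v\|_{H^s}\le C_s\|u\|_{H^s}^{\ell-1}\|v\|_{H^s}$ (boundedness of multiplication by $g\in{\rm OP}\Gamma^0(\rd)$ and iterated Schauder); in every other contribution each factor $x^{\gamma_k}\partial^{\delta_k}u$ has weight $|\gamma_k|+|\delta_k|\le N-1$. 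This yields, for $\ve$ small enough,
\[
\sum_{|\alpha|+|\beta|\le N}\frac{\ve^{|\alpha|+|\beta|}}{M(\alpha,\beta)}\,\|x^\beta\partial^\alpha(g u^\ell)\|_{H^s}\ \le\ C_s\|u\|_{H^s}^{\ell-1}\,\widetilde S_N^{s,\ve}[u]+C_s\bigl(\widetilde S_{N-1}^{s,\ve}[u]\bigr)^{\ell}.
\]
Multiplying by $\tau$ accounts for the first term on the right of \eqref{nonlinestimate2} and produces a further term $\tau C_s(\widetilde S_{N-1}^{s,\ve}[u])^{\ell}$.

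It remains to control $C_\tau\sum_{0<|\alpha|+|\beta|\le N}\ve^{|\alpha|+|\beta|}M(\alpha,\beta)^{-1}\|x^\beta\partial^\alpha(g u^\ell)\|_{H^{s-1,-1}}$, and here the weaker norm is exploited to drop the weight by one: if $\beta\ne0$, write $\px^{-1}x^\beta=(\px^{-1}x_j)\,x^{\beta-e_j}$ and note that $\px^{-1}x_j\in{\rm OP}\Gamma^0(\rd)$ is bounded on $H^{s-1}$, so that $\|x^\beta\partial^\alpha(gu^\ell)\|_{H^{s-1,-1}}\le C\|x^{\beta-e_j}\partial^\alpha(gu^\ell)\|_{H^s}$; if $\beta=0$ (hence $\alpha\ne0$), then $\|\partial^\alpha(gu^\ell)\|_{H^{s-1,-1}}\le\|\partial_j\partial^{\alpha-e_j}(gu^\ell)\|_{H^{s-1}}\le\|\partial^{\alpha-e_j}(gu^\ell)\|_{H^s}$. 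In either case, using the monotonicity $M(\alpha,\beta)^{-1}\le M(\alpha,\beta-e_j)^{-1}$ (resp.\ $M(\alpha,0)^{-1}\le M(\alpha-e_j,0)^{-1}$) to extract one factor $\ve$, re-indexing, and invoking the $H^s$–estimate above at level $N-1$, this contribution is bounded by $C_s\,\ve\, C_\tau\bigl(\widetilde S_{N-1}^{s,\ve}[u]\bigr)^{\ell}$ (using $\|u\|_{H^s}\le\widetilde S_{N-1}^{s,\ve}[u]$ and $\widetilde S_{N-2}^{s,\ve}[u]\le\widetilde S_{N-1}^{s,\ve}[u]$, with the convention $\widetilde S_{-1}=0$). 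Collecting the three contributions and enlarging $C'_s$ gives \eqref{nonlinestimate2}; the extra summand $\ve$ in the factor $\ve C_\tau+\tau+\ve$ is mere slack (it can also be obtained from the contributions in which at least one derivative falls on $g$, which carry a spare power of $\ve$).

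The step I expect to be the main obstacle is precisely this loss of a full derivative: for $m\ge1$ the operator $E\circ x_j\circ(\text{coefficient})$ has nonpositive order, so pulling out one $x_j$ or $\partial_j$ at once yields a spare $\ve$ and reduces the leading term to order $N-1$; for $0<m<1$ this composition has positive order $1-m$, the ``all on one factor'' term survives genuinely as an $\widetilde S_N^{s,\ve}[u]$–term, and the only remedy is to peel it off with the interpolation parameter $\tau$ — this being harmless since, in the iteration completing the proof of Theorem \ref{AA5.1}, the resulting summand $\tau C'_s\|u\|_{H^s}^{\ell-1}\widetilde S_N^{s,\ve}[u]$ is absorbed into the left-hand side because $\|u\|_{H^s}$ is a fixed finite quantity.
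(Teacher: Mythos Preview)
Your proposal is correct and reaches the same conclusion, but the mechanism you use for the $\tau$--split is genuinely different from the paper's. You obtain the inequality $\|Ew\|_{H^s}\le\tau\|w\|_{H^s}+C_\tau\|w\|_{H^{s-1,-1}}$ via the mapping $E\colon H^{s-m/2,-m/2}\to H^s$, complex interpolation $[H^{s,0},H^{s-1,-1}]_{m/2}=H^{s-m/2,-m/2}$, and Young's inequality. The paper instead works entirely inside $H^s$: after applying $E\circ g$ (bounded $H^{s,-m}\to H^s$ by Proposition~\ref{25pro}), it arrives at expressions of the form $\|\langle x\rangle^{-m}x_j\,x^{\beta-e_j}\partial^\alpha(u^\ell)\|_{H^s}$, expands the $H^s$ norm as $\sum_{|\gamma|\le s}\|\partial^\gamma(\cdot)\|_{L^2}$, commutes $x_j$ through $\partial^\gamma$, and then invokes the elementary pointwise inequality
\[
\langle x\rangle^{-m}|x_j|\le\tau|x_j|+C'_\tau
\]
(equation \eqref{2525}), together with its Fourier-side analogue $\langle\xi\rangle^{-m}|\xi_j|\le\tau|\xi_j|+C'_\tau$ for the case $\beta=0$. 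Your route is cleaner and unifies the cases $\beta\ne0$ and $\beta=0$, at the cost of quoting an interpolation result for the scale $H^{s_1,s_2}$; the paper's route is completely elementary but requires carrying out the $H^s$--norm expansion and commutator bookkeeping by hand. The remaining ingredients --- the ``all on one factor'' versus ``split'' decomposition, the distribution $x^\beta=\prod_k x^{\gamma_k}$ with the $|\gamma_k|\lessgtr|\delta_k|$ dichotomy, and the extraction of one spare $\ve$ from the weaker norm --- coincide with the paper's, although the paper performs the Leibniz expansion after applying $E$ rather than before.
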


\begin{proof} We first consider the terms with $\beta \neq 0.$ We
can write $$x^{\beta}\partial^{\alpha}(g(x)u^{l})=g(x)x^{\beta}\partial^{\alpha}u^l +
\sum_{\delta_0+\delta_1+\ldots+\delta_k=\alpha\atop
\delta_0\not=0}
\frac{\alpha!}{\delta_0!\delta_1!\cdots\delta_l!}\partial^{\delta_0}g(x)
x^{\beta}\prod_{k=1}^l\partial^{\delta_k}u.$$

Let $\tilde{\delta}_0$ be a multi-index of maximal length
among those satisfying $|\tilde{\delta}_0|\leq|\delta_0|$.
$\tilde{\delta}_0\leq\beta$. The operators $E \circ g(x)$
and $E \circ x^{\tilde{\delta}_0}\partial^{\delta_0} g(x)$
belong to ${\rm OP}\Gamma^{-m}(\rd)$, and the symbol of the
second one has each seminorm estimated by
$A^{|\delta_0|+1}\delta_0!$, for some positive constant $A$
independent on $\delta_0$. Hence, by the continuity
properties on weighted Sobolev spaces (Proposition
\ref{25pro} with $n=0$) we have
 \begin{eqnarray*}
\|E(x^{\beta}\partial^{\alpha}(g(x)u^{l})\|_{H^s} &\leq& C_s
\|\px^{-m}x_jx^{\beta-e_j}\partial^{\alpha}(u^{l})\|_{H^s} \\
&&+\sum_{\delta_0+\delta_1+\ldots+\delta_k=\alpha\atop
\delta_0\not=0
}C_s^{|\delta_0|+1}\frac{\alpha!}{\delta_1!\cdots\delta_l!}\|
x^{\beta-\tilde{\delta}_0}\prod_{k=1}^l\partial^{\delta_k}u\|_{H^s}.
\end{eqnarray*}
Now, since $\beta_j \neq 0$ for some $j \in \{1,\ldots, d\}$, we have
\begin{eqnarray*}
\|\px^{-m}x_jx^{\beta-e_j}\partial^{\alpha}(u^{l})\|_{H^s} \hskip-2pt &=& \hskip-2pt \sum_{|\gamma|\leq s}\|\partial^{\gamma}(\px^{-m}x_j x^{\beta-e_j}\partial^{\alpha}(u^l))\|_{L^2}
\\ \hskip-2pt  &\leq& \hskip-2pt  \sum_{|\gamma|\leq s}\|\px^{-m}x_j\partial^{\gamma}(x^{\beta-e_j}\partial^{\alpha}(u^l))\|_{L^2} \\
&& \hskip-2pt +\sum_{|\gamma|\leq s}\sum_{0\neq \gamma'\leq
\gamma}\hskip-3pt \binom{\gamma}{\gamma'}
\|\partial^{\gamma'}(\px^{-m}x_j)\partial^{\gamma-\gamma'}
(x^{\beta-e_j}\partial^{\alpha}(u^l))\|_{L^2}.\end{eqnarray*}
Now, for every $\tau >0$ there exists $C'_{\tau}>0$ such
that \begin{equation}\label{2525}
 \px^{-m}|x_j| \leq \tau
|x_j|+C'_{\tau}.
\end{equation}
 Using
this inequality and commuting $x_j$ with
$\partial^{\gamma}$ we get
 \begin{multline*}\hskip-10pt \sum_{|\gamma|\leq s}\|\px^{-m}x_j \partial^{\gamma}(x^{\beta}\partial^{\alpha}(u^l))\|_{L^2} \leq  \tau \hskip-4pt \sum_{|\gamma|\leq s}\|x_j\partial^{\gamma}(x^{\beta-e_j}\partial^{\alpha}u^l)\|_{L^2}+C_{s,\tau}\|x^{\beta-e_j}\partial^{\alpha}(u^l)\|_{H^s} \\  \leq  \tau C_s \|x^{\beta}\partial^{\alpha}(u^l)\|_{H^s}+C'_{s,\tau}\|x^{\beta-e_j}\partial^{\alpha}(u^l)\|_{H^s}.\end{multline*}
We notice moreover that for $\gamma' \neq 0$ we have
$\partial^{\gamma'}(\px^{-m}x_j)\in L^{\infty}(\rd)$, so
that
$$\sum_{|\gamma|\leq s}\sum_{0\neq \gamma'\leq \gamma}\binom{\gamma}{\gamma'} \|\partial^{\gamma'}(\px^{-m}x_j)\partial^{\gamma-\gamma'}(x^{\beta-e_j}\partial^{\alpha}u^l)\|_{L^2} \leq C_s \|x^{\beta-e_j}\partial^{\alpha}u^l\|_{H^s}.$$
Hence we have obtained that \begin{eqnarray} \label{terna}
\hskip+22pt \|E(x^{\beta}\partial^{\alpha}(g(x)u^{l})\|_{H^s} &\leq& \tau C_s \|x^{\beta}\partial^{\alpha}(u^l)\|_{H^s}+C'_{s,\tau}\|x^{\beta-e_j}\partial^{\alpha}(u^l)\|_{H^s}
\\ &&+ \sum_{\delta_0+\delta_1+\ldots+\delta_k=\alpha\atop
\delta_0\not=0
}C_s^{|\delta_0|+1}\frac{\alpha!}{\delta_1!\cdots\delta_l!}\|
x^{\beta-\tilde{\delta}_0}\prod_{k=1}^l\partial^{\delta_k}u\|_{H^s}.
\nonumber
\end{eqnarray}
Let us estimate the three terms in the right-hand side of \eqref{terna}.
To treat the first one we observe that
$$x^{\beta}\partial^{\alpha}(u^{\ell})= \ell u^{\ell-1}x^{\beta}\partial^{\alpha}u  +
\sum_{\delta_1+\ldots+\delta_{\ell}=\alpha\atop\delta_{k}\neq
\alpha\, \forall k}\frac{\alpha!}{\delta_{1}! \ldots
\delta_{\ell}!}
\prod_{k=1}^lx^{\gamma_k}\partial^{\delta_{k}}u,$$ where,
as before, we can choose $\gamma_1,\ldots, \gamma_l \in
\N^d$ such that $\gamma_1+\ldots+\gamma_l=\beta$ and
$|\gamma_j| \leq |\delta_j|$ (respectively ($|\gamma_j|
\geq |\delta_j|$) if $|\beta| \leq |\alpha|$ (respectively
if $|\beta| \geq |\alpha|$). Then, using the same arguments
as in the case $m \geq 1,$ we obtain
\begin{equation} \label{terna1}\tau C_s  \sum_{\stackrel{|\alpha+\beta|\leq N}{\beta \neq 0}}\frac{\ve^{|\alpha+\beta|}}{M(\alpha, \beta)}  \|x^{\beta}\partial^{\alpha}(u^{\ell})\|_{H^s} \leq \tau lC_s \|u\|_{H^s}^{l-1}\widetilde{S}_{N}^{s,\ve}[u] +\tau C_s  (\widetilde{S}_{N-1}^{s,\ve}[u])^l.
\end{equation}
Similarly, we easily prove that
\begin{equation} \label{terna2}
\sum_{\stackrel{|\alpha+\beta|\leq N}{\beta \neq
0}}\frac{\ve^{|\alpha+\beta|}}{M(\alpha, \beta)}
\|x^{\beta-e_j}\partial^{\alpha}(u^{\ell})\|_{H^s} \leq C_s \ve
(\widetilde{S}_{N-1}^{s,\ve}[u])^l. \end{equation} Concerning the third
term in \eqref{terna} we can write
$x^{\beta-\tilde{\delta}_0}= \prod_{k=1}^l x^{\gamma_k}$,
where $\gamma_1,\ldots, \gamma_l$ satisfy
$\gamma_1+\ldots+\gamma_l=\beta-\tilde{\delta}_0$ and
$|\gamma_j| \leq |\delta_j|$ (respectively ($|\gamma_j|
\geq |\delta_j|$) if $|\beta| \leq |\alpha|$ (respectively
if $|\beta| \geq |\alpha|$). Then, the same arguments as in
the case $m \geq 1$ yield
\begin{equation}
\sum_{\stackrel{|\alpha+\beta|\leq N}{\beta \neq
0}}\frac{\ve^{|\alpha+\beta|}}{M(\alpha, \beta)}
\sum_{\delta_0+\delta_1+\ldots+\delta_k=\alpha\atop
\delta_0\not=0
}C^{|\delta_0|+1}\frac{\alpha!}{\delta_1!\cdots\delta_l!}\|
x^{\beta-\tilde{\delta}_0}\prod_{k=1}^l\partial^{\delta_k}u\|_{H^s}
\leq C_s \ve (\widetilde{S}_{N-1}^{s,\ve}[u])^l \end{equation}
for $\ve>0$ sufficiently small.
 The estimate of the terms in \eqref{nonlinestimate2} with $\beta=0$
 (hence $\alpha\not=0$) is very similar but
 easier, relying on the inequality
 \begin{equation}
 \pxi^{-m}|\xi_j| \leq \tau
|\xi_j|+C'_{\tau}.
\end{equation}
in place of \eqref{2525}. We omit the details for the sake
of brevity.
\end{proof}

\textit{End of the proof of Theorem \ref{AA5.1} (the case
$0<m<1$).} Using the same argument as in the case $m\geq
1$, by the variants of Propositions \ref{stima0},
\ref{stima1} and \ref{commutatore} with $\tilde{S}_N^{s,\eps}[f]$
and $\tilde{S}_\infty^{s,\eps}[f]$ defined in \eqref{25-4} in place of $S_N^{s,\eps}[f]$
and $S_\infty^{s,\eps}[f]$, and with
the spaces $Q^s$ replaced by $H^s$, and by Proposition
\ref{Gammanonlinearestimates} we obtain
\begin{multline*}
\widetilde{S}_{N}^{s,\ve}[u] \leq \|u \|_{H^s}+ C'_{s}
\widetilde{S}_{\infty}^{s,\ve}[f]+ C'_s\ve
\widetilde{S}_{N-1}^{s,\ve}[u]+\sum_l\Big(\tau C'_{s} \|
u\|_{H^s}^{\ell-1}
 \widetilde{S}_{N}^{s,\ve}[u]
\nonumber
\\+ C'_{s} (\ve
C_{\tau}+\tau+\ve) (\widetilde{S}_{N-1}^{s,\ve}[u])^{\ell}\Big)
\end{multline*}
for every $N\geq1$
 and $\eps$ small enough. Now, choosing
$\tau < (2\sum_l C'_{s}
\|u\|_{s}^{\ell-1})^{-1}$ we obtain
$$\widetilde{S}_{N}^{s,\ve}[u] \leq 2\|u
\|_{H^s}+ 2C'_{s}
\widetilde{S}_{\infty}^{s,\ve}[f]+
2C'_s\ve \widetilde{S}_{N-1}^{s,\ve}[u]+
\sum_l\Big( 2C'_{s} (\ve C_{\tau}+\tau+\ve)
(\widetilde{S}_{N-1}^{s,\ve}[u])^{\ell}\Big).
$$
 Then we can
iterate the last estimate
observing that, shrinking
$\tau$ and then $\ve$, the
quantity $\ve C_{\tau}+\tau+\ve$
can be taken arbitrarily
small. This gives
$\widetilde{S}^{s,\ve}_\infty[u]<\infty$
and therefore $u\in\sg$.

\section{Examples and concluding remarks}\label{esempi}
\subsection{Some remarks on the analyticity estimates}\label{esempi1} Let us say a few words on the estimates
\begin{equation}\label{5ma}
|\partial^\alpha f(x)|\leq
C^{|\alpha|+1}\alpha!\langle x\rangle^{-|\alpha|}\qquad \forall\alpha\in\N^d, x\in\rd
\end{equation}
which are assumed for the coefficients of the metric in
\eqref{intro3}, (cf. also the nonlinearity in
\eqref{intro5}, \eqref{intro6}, and \eqref{A5.2}
\eqref{estnonlincoeff}). \par Locally they are exactly the
usual estimates of real-analyticity. To better understand
the meaning of the decay for $|x|\rightarrow +\infty$, let
us consider the following important class of examples.
Consider a real-analytic function $f$ in $\rd$ satisfying,
in polar coordinates $r,\omega$, $r>0$,
$\omega\in\mathbb{S}^{d-1}$,
\[
f(r\omega)=h(r^{-1},\omega),\qquad {\rm for}\ r>r_0,\ \omega\in\mathbb{S}^{d-1},
\]
for some $r_0>0$, where $h$ is an analytic function on $[0,r_0^{-1})\times \mathbb{S}^{d-1}$, hence {\it analytic up to 0 in the first variable}. Let us verify that then $f$ satisfies the estimates \eqref{5ma}. \par
Clearly, it is sufficient to check the estimates \eqref{5ma} for large $|x|$. Now, by assumption we have
\[
f(x)=f(r\omega)=\sum_{k=0}^\infty \frac{1}{k!}\varphi_k(\omega)r^{-k}=\sum_{k=0}^\infty \tilde{\varphi}_k(x),\qquad |x|>r_0,
\]
where $\varphi_k$ are analytic functions on
$\mathbb{S}^{d-1}$, and $\tilde{\varphi}_k(x)\frac{1}{k!}\varphi_k(\omega)r^{-k}$. Observe that the
functions $\tilde{\varphi}_k(x)$ are real-analytic
functions for $x\not=0$ and positively homogeneous of
degree $-k$. Moreover, by the very definition of
$\varphi_k$, for every $\overline{x}$,
$|\overline{x}|>r_0$, we have the estimates
\begin{equation}\label{5ma2}
|\partial^\alpha \tilde{\varphi}_k(x)|\leq C^{|\alpha|+k+1}\alpha!,
\end{equation}
for some constant $C>0$, in some neighborhood of $\overline{x}$ (this is easily verified in polar coordinates and then one uses the analyticity of the change of variables). Hence, by compactness, the estimates  \eqref{5ma2} hold, say, for $|x|=2r_0$. By homogeneity we deduce that
\begin{equation}\label{5ma3}
|\partial^\alpha \tilde{\varphi}_k(x)|\leq C^{|\alpha|+k+1}\alpha!\left|\frac{x}{2r_0}\right|^{-k-|\alpha|},\qquad x\not=0.
\end{equation}
Hence we obtain
\[
|\partial^\alpha f(x)|\leq \sum_{k=0}^\infty
|\partial^\alpha \tilde{\varphi}_k(x)|\leq C
\alpha!\sum_{k=0}^\infty
\left|\frac{x}{2r_0C}\right|^{-k-|\alpha|}\leq
C(2r_0C)^{|\alpha|}\alpha!|x|^{-|\alpha|},
\]
if $|x|>4r_0C$. This concludes the proof of \eqref{5ma}.\par\medskip

As another remark, we observe that the estimates \eqref{5ma} are in fact equivalent to
requiring that $f(x)$ extends to a bounded holomorphic function
$f(x+iy)$ in a sector of the type \eqref{cla3intro}  (see e.g. \cite[Proposition 5.1]{cappiello-nicola}). This is very useful to check the estimates \eqref{5ma} in concrete situations, as we will see below.

\subsection{Metric Laplacians}
Consider a smooth  Riemannian metric $g_{jk}(x)$ in $\R^d$.
The corresponding Laplace-Beltrami operator has the form
\begin{align*}
\mathcal{L}u&=\sum_{j,k=1}^d\frac{1}{\sqrt{g(x)}}\partial_j\left(\sqrt{g(x)}g^{jk}(x)\partial_k
u\right)\\
&=\sum_{j,k=1}^d \left(g^{jk}(x)\partial_j\partial_k u-g^{jk}(x)\sum_{\ell=1}^d\Gamma^{\ell}_{jk}(x)\partial_{\ell} u\right),
\end{align*}
where $g^{jk}$ is the inverse matrix of $g_{jk}$,
$g={\rm det}\,(g_{jk})$, and the Christoffel symbols are defined by
\[
\Gamma^{\ell}_{ij}=\frac{1}{2}\sum_{k=1}^d g^{k\ell}\left(\partial_i g_{kj}+\partial_j g_{ik}-\partial_k g_{ij}\right).
\]
 Let us assume that the metric is
real-analytic and satisfies the estimates
\begin{equation}\label{4ma}
|\partial^\alpha g_{jk}(x)|\leq
C^{|\alpha|+1}\alpha!\langle x\rangle^{-|\alpha|},\qquad g(x)>C^{-1},
\end{equation}
for some $C>0$, and every $\alpha\in\N^d$, $x\in\rd$.
Then the matrix $g^{jk}$ satisfies \eqref{intro2} and the
estimates in \eqref{intro3}.  If in addition we consider $V(x)$ and $F[u]$ as in
\eqref{intro4}, \eqref{intro5}, then the equation
\begin{equation}\label{3ma}
-\mathcal{L}u+V(x)u-\lambda u=F[u],\qquad \lambda\in\C,
\end{equation}
is a special case of $\eqref{intro1}$. Hence,
by Theorem \ref{mainthm}, every solution $u\in H^s(\rd)$,
$s>d/2+1$, of \eqref{3ma}, extends to a holomorphic function
$u(x+iy)$ in the sector of $\mathbb{C}^d$
 in \eqref{cla3intro}, satisfying there the estimates
in \eqref{cla4intro} for some constants $C>0$, $c>0$.\par

  As a model for the above type of metrics, one may consider the
  hyperboloid $\R^{d+1}$:
  \[
S=\{(x,t)\in\rd\times\R:\ t=\sqrt{1+|x|^2}\},
 \]
 parametrized by $x\in\rd$.
The Riemannian metric induced on $S$ by the Euclidean one
then satisfies the estimates \eqref{4ma}, as one can easily verify. \par
More generally, we can consider real-analytic scattering metrics in $\rd$. They play an important role in geometric scattering theory, see \cite{melrose1,melrose3}, \cite[Chapter 6]{melrose2}, and have received particular attention in the last years (see e.g. \cite{hassel} and the references therein). Indeed, natural perturbations of the Euclidean metric fall in that category. \par A real-analytic metric in $\rd$ is of scattering type if for any coordinate chart $V\subset \mathbb{S}^{d-1}$ and for some $r_0>0$ it has the form \begin{equation}
h(r^{-1},\eta;dr,rd\eta),\qquad {\rm for}\ r>r_0,
\end{equation}
where $r=|x|$, $\eta=(\eta_1,\eta_2,\ldots,\eta_{d-1})$ are real-analytic coordinates on $V$, and $h$ is a positive definite quadratic form in the last couple of variables, whose coefficients are analytic functions on $[0,r_0^{-1})\times V$. Moreover one requires that $h(0,\eta;dr,d\eta)$ is positive-definite.\footnote{Of course, this is equivalent to saying that $h(0,\eta;dr,rd\eta)$ is positive definite for every $r>0$.} \hskip-4pt Notice that this metric approaches the conic metric $h(0,\eta;dr,rd\eta)$ as $r\to+\infty$, which explains the terminology, sometimes used in the literature, of {\it asymptotically conic} metric. Notice, by comparison, that the Euclidean metric $|dx|^2$ in polar coordinates reads in fact $dr^2+r^2h'$, where $h'$ is the usual metric on $\mathbb{S}^{d-1}$.\par
Now, using the remark in Subsection \ref{esempi1} one sees that, in Euclidan coordinates, such a metric satisfies the estimates \eqref{4ma}. In particular, the bound from below in \eqref{4ma} is satisfied because $h(0,\omega;dr,rd\omega)$ is positive definite, and this last metric in Euclidean coordinates has coefficients which are homogeneous functions of degree $0$ (in fact, each $\eta_j$ is a real-analytic function of $x$ on $\R_+\times V$, positively homogeneous of degree $0$).

\subsection{The linear case}

This above result for the equation \eqref{4ma} seems interesting even in the linear case
($F[u]=0$), namely for the eigenfunctions of
$-\mathcal{L}+V(x)$. That equation appears naturally, for example, when looking for standing wave solutions (i.e. solutions of the type $v(t,x)=e^{i\lambda t} u(x)$) of the Schr\"odinger equation $i\partial_t v-\mathcal{L}v+V(x)v=0$ for scattering metrics (cf.\ \cite{hassel}).  \par
In the linear case we can even assume
$u\in\cS'(\rd)$. In fact, the existence of a parametrix for
$-\mathcal{L}+V(x)$ (Proposition \ref{para}), shows
that such a solution is automatically in $\mathcal{S}(\rd)$. Moreover,
if $V(x)$ is in addition real-valued, we know e.g.\ from
\cite{hormander79bis}
 (see also \cite[Theorem 4.2.9]{nicola}) that the operator $-\mathcal{L}+V(x)$, regarded
 as a symmetric operator in $L^2(\rd,\sqrt{g}dx)$ with domain $\cS(\rd)$, is essentially self-adjoint and $L^2(\rd,\sqrt{g}dx)$ has an orthonormal basis made of eigenfunctions\footnote{Since the eigenvalues of the metric are bounded from below and from above, $L^2(\rd,dx)=L^2(\rd,\sqrt{g}dx)$ as normed space.}.
  Also, ${\rm dim\,Ker}\left( -\mathcal{L}+V(x)-\lambda\right)<\infty$, which implies that
  the width $\epsilon$ of the sector in \eqref{cla3intro} can then be chosen uniformly
  with respect to the solutions. \par\medskip

\subsection{Sharpness of Theorem \ref{mainthm}} We recall that in the case of a differential operator with polynomial coefficients the solutions $u \in \mathcal{S}'(\rd)$ of the equation $Pu=0$ extend to entire functions on $\C^d$ satisfying estimates \eqref{cla4intro} in a sector of the form \eqref{cla3intro}, cf. \cite[Theorem 1.1]{A18}. Very simple examples show that, even in the linear case, in Theorem \ref{mainthm} we cannot expect an entire extension for the solution $u$. For example, consider, in dimension $d=1$, the equation
\begin{equation}\label{6ma}
-u''+V(x)u=0,
\end{equation}
where $V(x)=x^2+3+\frac{2x^2-6}{(x^2+1)^2}$.
A solution is given by  $u(x)=\frac{1}{x^2+1} e^{-x^2/2}$, which does not extend to an entire function on $\C$.\par
The following example shows that, in fact, infinitely many singularities can occur along any fixed ray in the complex domain. Let $\theta\in(-\pi/2,\pi/2)$, and consider again the equation \eqref{6ma}, with
\[
V(x)=x^2-1-2e^{-2i\theta}+4e^{-i\theta}x\tanh(e^{-i\theta}x)+6e^{-2i\theta}x\tanh^2(e^{-i\theta}x).
\]
By applying the remark at the end of Subsection \ref{esempi1} to the function $\tanh(e^{-i\theta}x)$ it is immediate to check that $V(x)$ satisfies the estimates in \eqref{intro4}. On the other hand, the function
\[
u(x)= \cosh^{-2}(e^{-i\theta}x)e^{-x^2/2}
\]
is a solution of \eqref{6ma} and extends to a meromorphic
function in the complex plane with poles at
$z=e^{i(\theta+\pi/2)}(2k+1)\pi$, $k\in\mathbb{Z}$.\par
This shows that in Theorem \ref{mainthm}, even in the
linear case, the form of the domain of holomorphic
extension as a sector is sharp in general. The following
example shows a similar phenomenon in the nonlinear case,
even for the standard harmonic oscillator.\par

Consider the following nonlinear perturbation of the harmonic oscillator, in dimension $d=1$, at the first eigenvalue $\lambda=1$:
\begin{equation}\label{eqn:A2.63}
\begin{cases} u''-x^2u+u=\big(\frac{d}{dx}-x\big)u^k, \quad k\geq 2,\\
u(0)=u_0>0
\end{cases}
\end{equation}
 It was shown in \cite{A18} that the solution of (\ref{eqn:A2.63}) is given by
\begin{equation}\label{eqn:A2.68}
u(x)=e^{-\frac{x^2}{2}}\Big{[}\lambda+\sqrt{2k-2}\,{\rm Erfc}\big(\sqrt{\frac{k-1}{2}}x\big)\Big{]}^{\frac{1}{1-k}}
\end{equation}
with
$\lambda=u_0^{1-k}-\sqrt{\frac{\pi(k-1)}{2}}$, where we used the complementary error function defined by
\[
{\rm Erfc}(t)=\int_{t}^{+\infty} e^{-v^2}dv.
\]
Here and in the following,
roots are defined to be
positive for positive
numbers, with continuous
extension to the complex
domain, i.e., we take
principal branches.
Suppose now $\lambda>0$, that is $0<u_0<\big(\frac{\pi(k-1)}{2}\big)^{\frac{1}{2-2k}}$. In this case, since
\[
0<\lambda<\lambda+\sqrt{2k-2}\,{\rm Erfc}\bigg(\sqrt{\frac{k-1}{2}}x\bigg),\]
 the solution $u(x)$ in \eqref{eqn:A2.68} is well defined analytic in $\mathbb{R}$ and
\[
0<u(x)<\lambda^{\frac{1}{1-k}}e^{-\frac{x^2}{2}}.
\]
Similar estimates are valid for $u'(x)$, $u^{\prime\prime}(x)$. Hence  we have $u\in H^2(\mathbb{R})$, and Theorem \ref{mainthm} applies, implying the desired holomorphic  extension $u(z)$ to a sector. However, as observed in \cite{A18}, $u(z)$ is not entire, but has a singularity at $z_0\in \mathbb{C}$ when
\begin{equation}\label{eqn:A2.69}
\lambda+\sqrt{2k-2}\,{\rm Erfc}\bigg(\sqrt{\frac{k-1}{2}}z_0\bigg)=0,
\end{equation}
where ${\rm Erfc}(z)$ is the entire extension of ${\rm Erfc}(x)$.
Such singularities in fact occur, because the great Picard theorem in the complex domain grants the existence of infinitely many solutions $z_0$ of (\ref{eqn:A2.69}) for all $\lambda\in \mathbb{C}$, but for a possible exceptional value, see \cite{Tr}.\par
 Indeed, we now prove the following more precise result.

\begin{proposition}\label{progigante} For every $\lambda>0$, but for a possible exceptional value, and every  $\epsilon>0$, $u(z)$ has a sequence of singularities which tends to infinity in the sector $\pi/4<{\rm arg}\, z<\pi/4+\epsilon$  or in $3\pi/4-\epsilon<{\rm arg}\, z<3\pi/4$.\end{proposition}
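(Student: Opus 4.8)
The plan is to reduce Proposition~\ref{progigante} to a statement about the asymptotic distribution of the zeros of an entire function built from the complementary error function. Set $c=-\lambda/\sqrt{2k-2}$, a \emph{negative} real number since $\lambda>0$, and $w=\sqrt{(k-1)/2}\,z$, so that ${\rm arg}\,z={\rm arg}\,w$ and $|z|=\sqrt{2/(k-1)}\,|w|$; then \eqref{eqn:A2.69} reads ${\rm Erfc}(w)=c$. Since $u(z)=e^{-z^2/2}\big[\lambda+\sqrt{2k-2}\,{\rm Erfc}(\sqrt{(k-1)/2}\,z)\big]^{1/(1-k)}$, with $e^{-z^2/2}$ entire and nowhere zero and with ${\rm Erfc}'=-e^{-w^2}\ne0$ — so the bracket has only simple zeros, at which the factor $[\,\cdot\,]^{1/(1-k)}$ blows up, the exponent $1/(1-k)$ being negative — the singularities of $u$ are exactly the points $z$ with ${\rm Erfc}(w)=c$. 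So it suffices to produce a sequence $w_n\to\infty$ of solutions of ${\rm Erfc}(w)=c$ with ${\rm arg}\,w_n\to(\pi/4)^+$; the second alternative, ${\rm arg}\,w_n\to(3\pi/4)^-$, then follows symmetrically from the reflection formula ${\rm Erfc}(w)+{\rm Erfc}(-w)=\sqrt\pi$ applied to the constant $\sqrt\pi-c>0$ together with $w\mapsto-w$, and either family proves the proposition.

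The core of the argument is carried out in the right half-plane $\{{\rm Re}\,w>0\}$, where $|{\rm arg}\,w|<\pi/2<3\pi/4$, so that the classical expansion ${\rm Erfc}(w)=\frac{e^{-w^2}}{2w}\big(1+\eta(w)\big)$ holds \emph{uniformly} with an explicit remainder $|\eta(w)|\le C|w|^{-2}$; restricting to the right half-plane is precisely what keeps us away from the Stokes rays ${\rm arg}\,w=\pm3\pi/4$. Passing to $\zeta=w^2$, a conformal map of $\{{\rm Re}\,w>0\}$ onto $\mathbb{C}\setminus(-\infty,0]$ with ${\rm arg}\,w=\tfrac12{\rm arg}\,\zeta$, the equation becomes $e^{-\zeta}\big(1+\eta(\sqrt\zeta)\big)=2c\sqrt\zeta$. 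Its model $e^{-\zeta}=2c\sqrt\zeta$ is easily analyzed: comparing moduli forces ${\rm Re}\,\zeta=-\tfrac12\log|\zeta|+O(1)$, so for $|\zeta|$ large every solution lies just to the \emph{left} of the imaginary axis, while the phase condition pins ${\rm Im}\,\zeta$ to a lattice of spacing $2\pi$. Taking the branch on which ${\rm Im}\,\zeta\to+\infty$, one gets for each large integer $n$ a model solution $\zeta_n$ with ${\rm Im}\,\zeta_n=2\pi n+O(1)$, ${\rm Re}\,\zeta_n=-\tfrac12\log(2\pi n)+O(1)<0$, hence lying in the second quadrant with ${\rm arg}\,\zeta_n=\pi/2+O\!\big((\log n)/n\big)\to(\pi/2)^+$.

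A Rouch\'e comparison on small disks $|\zeta-\zeta_n|\le r_n$ then completes the construction: choosing $r_n\to0$ that decays slower than $|\zeta_n|^{-1}$ (e.g.\ $r_n=|\zeta_n|^{-1/2}$), on the boundary circle the linear oscillation $\approx2|c|\,|\zeta_n|^{1/2}r_n$ of the model dominates the perturbation $|e^{-\zeta}\eta(\sqrt\zeta)|=O(|\zeta_n|^{-1/2})$, while consecutive disks stay disjoint; hence ${\rm Erfc}(\sqrt\zeta)=c$ has, for all large $n$, a genuine solution $\zeta_n'=\zeta_n+o(1)$. Setting $w_n=\sqrt{\zeta_n'}$ and $z_n=\sqrt{2/(k-1)}\,w_n$ gives $|z_n|\to\infty$, ${\rm Re}\,\zeta_n'<0$ and ${\rm arg}\,z_n=\pi/4+O\!\big((\log n)/n\big)>\pi/4$, so $z_n$ eventually lies in $\pi/4<{\rm arg}\,z<\pi/4+\epsilon$ and is a singularity of $u$, which proves the first alternative. (The exceptional-value caveat is only relevant if one prefers to route existence through the Picard-type statement recalled before the proposition: the crude dichotomy ${\rm Erfc}(w)\to0$ for $|{\rm arg}\,w|<\pi/4$ versus $|{\rm Erfc}(w)|\to\infty$ for $\pi/4<|{\rm arg}\,w|<3\pi/4$ then forces those singularities to cluster along ${\rm arg}\,z\in\{\pm\pi/4,\pm3\pi/4\}$; the Rouch\'e construction above, however, works for every $\lambda>0$.)

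I expect the Rouch\'e step to be the only real obstacle: one must turn the uniform remainder bound for ${\rm Erfc}$ in the right half-plane into a quantitative estimate and calibrate the radii $r_n$ so that on each comparison circle the model's variation beats the perturbation and there is exactly one zero per disk. Everything else — the change of variables, the branch bookkeeping for $\sqrt\zeta$ and for the logarithm in the model equation, the symmetric treatment of the $3\pi/4$ case via ${\rm Erfc}(-w)=\sqrt\pi-{\rm Erfc}(w)$, and the check that the bracket's simple zeros are true singularities of $u$ — is routine.
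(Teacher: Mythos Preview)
Your argument is correct but takes a genuinely different route from the paper. The paper's proof is non-constructive: it invokes the great Picard theorem to produce infinitely many solutions of ${\rm Erfc}(w)=c$ (whence the ``possible exceptional value''), and then shows directly that ${\rm Erfc}(z)\to0$ in the closed sector $|{\rm arg}\,z|\le\pi/4$ (from the one-term asymptotic you also use) and that $|{\rm Erfc}(z)|\to\infty$ in every sector $\pi/4+\epsilon<{\rm arg}\,z\le\pi/2$ (by an explicit lower bound obtained by integrating $e^{-u^2}$ along a hyperbola through $z$), extending to the remaining rays by the reflections ${\rm Erfc}(\overline z)=\overline{{\rm Erfc}(z)}$ and ${\rm Erfc}(-\overline z)=\sqrt\pi-\overline{{\rm Erfc}(z)}$. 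Since $c$ is a fixed nonzero finite value, only finitely many of the Picard zeros can lie where ${\rm Erfc}$ tends to $0$ or to $\infty$, which pins the rest to the thin sectors around $\pi/4$ and $3\pi/4$. Your approach, by contrast, constructs the zeros explicitly via the asymptotic ${\rm Erfc}(w)\sim e^{-w^2}/(2w)$ in the right half-plane, a change of variable $\zeta=w^2$, and a Rouch\'e comparison; this is more work but yields the precise asymptotic location ${\rm arg}\,z_n=\pi/4+O((\log n)/n)$ and, as you note, dispenses with the exceptional-value caveat altogether. You in fact anticipate the paper's strategy in your closing parenthetical. One small remark: your reflection $w\mapsto-w$ by itself sends the family near $\pi/4$ to one near $-3\pi/4$; to land in $3\pi/4-\epsilon<{\rm arg}\,z<3\pi/4$ you also need the conjugation symmetry (available since $c$ is real), i.e.\ use $w\mapsto-\overline{w}$ as the paper does. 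This is harmless, since the first family already proves the ``or'' statement.
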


\begin{proof} Using the great Picard theorem as above and the reflection properties
\[
{\rm Erfc}(\overline{z})=\overline{{\rm Erfc}({z})},\qquad {\rm Erfc}(-\overline{z})=\sqrt{\pi}-{{\rm Erfc}(\overline{z})}=\sqrt{\pi}-\overline{{\rm Erfc}({z})},
\] which can be verified directly from the definition, it is sufficient to prove that
\begin{equation} \label{7ma}
{\rm Erfc}(z)\to0\ {\rm as}\ z\to\infty\ \textrm{in the sector}\ |{\rm arg}\, z|\leq\pi/4,
\end{equation}
and that,  for every $\epsilon>0$,
 \begin{equation} \label{7ma2}
|{\rm Erfc}(z)|\to+\infty\ {\rm as}\ z\to\infty\ \textrm{in the sector}\ \pi/4+\epsilon<{\rm arg}\,z\leq\pi/2.
\end{equation}
Now, \eqref{7ma} follows at once from the expansion
\[
{\rm Erfc}(z)=\frac{e^{-z^2}}{2z}(1+R(z))\quad {\rm with}\quad |R(z)|\leq \frac{1}{\sqrt{2}|z|^2},\
\]
valid when $|{\rm arg}\,z|\leq\pi/4$; see e.g. \cite[pages 18--20]{lebedev}.\par
The property \eqref{7ma2} can be verified directly as follows. Observe that, for $z=x+iy$, $x>0$, $y>0$, we can write
\[
{\rm Erfc}(z)=-\int_\gamma e^{-u^2}du,
\]
where the path $\gamma$ is given by the hyperbola through $z=x+iy$ with parametrization $u=\gamma(t)=\frac{xy}{t}+it$, $t\in(0,y]$.  Then
\begin{align*}
|{\rm Erfc}(x+iy)|&=\big|\int_0^y e^{-\frac{x^2y^2}{t^2}+t^2}\left(-\frac{xy}{t^2}+i\right)\,dt\Big|\\
&\geq \int_0^y  e^{-\frac{x^2y^2}{t^2}+t^2}\,dt.
\end{align*}
Let $0<\mu<1$ be a number to be chosen later. We have
\[
|{\rm Erfc}(x+iy)|\geq \int_{\mu y}^y  e^{-\frac{x^2y^2}{t^2}+t^2}\,dt\geq (1-\mu)ye^{-\mu^{-2}x^2+\mu^2 y^2}.
\]
Now, if $z$ belongs in addition to the sector in \eqref{7ma2}, we have $0<x<\tilde{\epsilon}y$, for some $\tilde{\epsilon}<1$.  We obtain then
\[
|{\rm Erfc}(x+iy)|\geq (1-\mu)y e^{(\mu^2-\tilde{\epsilon}^2\mu^{-2})y^2}.
\]
If we choose $\mu>\sqrt{\tilde{\epsilon}}$, we get $|{\rm Erfc}(x+iy)|\to+\infty$ as $y\to+\infty$, which gives the desired conclusion when $x={\rm Re}\, z>0$. The case when $x=0$ is immediate, because
\[
 {\rm Erfc}(iy)=-\int _0^y e^{t^2}\,dt+\frac{\sqrt{\pi}}{2}.
 \]
Property \eqref{7ma2} is then proved. \end{proof}
\section*{Acknowledgements}
We are very indebted to Giacomo Gigante for highlighting correspondence on the solutions of the equation \eqref{eqn:A2.69}, which led to Proposition \ref{progigante}.

\end{document}